\theoremstyle{definition}
\newtheorem{Def}{Definition}[section]
\theoremstyle{remark}
\newtheorem{Rem}[Def]{Remark}
\newtheorem{Claim}[Def]{Claim}
\newtheorem*{Ack}{Acknowledgments}
\theoremstyle{theorem}
\newtheorem{Th}[Def]{Theorem}
\newtheorem{Prop}[Def]{Proposition}
\newtheorem{Lem}[Def]{Lemma}
\newtheorem{Cor}[Def]{Corollary}
\newtheorem{Fact}[Def]{Fact}
\newcommand{\R}{\mathbb{R}}
\newcommand{\Z}{\mathbb{Z}}
\newcommand{\C}{\mathbb{C}}
\renewcommand{\P}{\mathbb{P}}
\newcommand{\CL}{\mathcal{L}}
\newcommand{\al}{\alpha }
\newcommand{\be}{\beta }
\newcommand{\ga}{\gamma }
\newcommand{\vep}{\varepsilon }
\newcommand{\vph}{\varphi }
\newcommand{\la}{\lambda }
\newcommand{\im}{{\rm Im}}
\newcommand{\ztwo}{\{ 0,1\}}
\newcommand{\tal}{\tilde{\alpha} }
\newcommand{\ttal}{\tilde{\tilde{\alpha}}}
\newcommand{\talp}{{\tilde{\alpha}'} }
\newcommand{\ttalp}{{\tilde{\tilde{\alpha}}'}}
\newcommand{\tga}{\tilde{\gamma} }
\newcommand{\ol}[1]{\overline{#1} }
\title[Fundamental group for Lauricella's $F_C$]{
  The fundamental group of the complement of 
  the singular locus of Lauricella's $F_C$
}
\author[Y. Goto]{Yoshiaki Goto}
\address[Goto]{
  General Education,
  Otaru University of Commerce,
  Otaru, Hokkaido, 047-8501, Japan 
}
\email{goto@res.otaru-uc.ac.jp}
\author[J. Kaneko]{Jyoichi Kaneko}
\address[Kaneko]{
   Department of Mathematical Science,
   University of the Ryukyus,
   Nishihara, Okinawa, 903-0213, Japan
}
\email{kaneko@math.u-ryukyu.ac.jp}
\keywords{
  Fundamental groups, van Kampen-Zariski theorem, 
  Reidemeister-Schreier method, Lauricella's  $F_C$
}
\subjclass[2010]{14F35, 57M05, 57M10}
\date{\today}
\begin{document}
\maketitle

\begin{abstract}
  We study the fundamental group of the complement of 
  the singular locus of Lauricella's hypergeometric function $F_C$
  of $n$ variables. 
  The singular locus consists of $n$ hyperplanes and 
  a hypersurface of degree $2^{n-1}$ in the complex $n$-space. 
  We derive some relations that holds for general $n\geq 3$. 
  We give an explicit presentation of the fundamental group
  in the three-dimensional case.
  We also consider a presentation of the fundamental group of 
  $2^3$-covering of this space. 
\end{abstract}

\section{Introduction}\label{section-intro}
In the study of the monodromy representation of Lauricella's hypergeometric function $F_C$
(see, e.g., \cite{G-FC-monodromy}), 
we consider the fundamental group of the complement of the following hypersurfaces: 
\begin{align*}
  (x_1 =0) ,\ \ldots ,(x_n=0),\ 
  S^{(n)}=(F_n(x) =0 ) \subset \C^n , 
\end{align*}
where
\begin{align*}
  F_n(x)= \prod _{(a_1 ,\ldots ,a_n)\in \ztwo^n} 
    \left( 1-\sum_{k=1}^n (-1)^{a_k} \sqrt{x_k} \right) .
\end{align*}
Note that $F_n (x)$ is an irreducible polynomial in $x_k$'s of degree $2^{n-1}$. 
$S^{(3)}$ is known as a Steiner surface (see, e.g., \cite{Mukai}). 

Throughout this paper, we assume $n\geq 2$. 
Let $X^{(n)}$ be the complement of these hypersurfaces. 
We consider $n+1$ loops $\ga_0 , \ga_1 ,\ldots , \ga_n$ in $X^{(n)}$; 
$\ga_k \ (1\leq k \leq n)$ turns the divisor $(x_k=0)$, 
and $\ga_0$ turns the divisor $S^{(n)}$
around the point $\left( \frac{1}{n^2},\ldots, \frac{1}{n^2} \right) \in S^{(n)}$.
Explicit definitions are given in Section \ref{section-preliminaries}. 
\begin{Fact}[\cite{G-FC-monodromy}]
  The fundamental group $\pi_1 (X^{(n)})$ is generated by 
  $\ga_0 ,\ \ga_1 ,\ldots ,\ \ga_n$. 
  Further, they satisfy the following relations: 
  \begin{align}
    \tag{$R_{ij}$}\label{rel1}
    &[\ga_i ,\ga_j] =1 \quad (1\leq i<j \leq n), \\
    \tag{$R_k$}\label{rel2}
    &(\ga_0 \ga_k)^2 =(\ga_k \ga_0)^2  \quad (1\leq k \leq n),
  \end{align}
  where $[\al, \be ]=\al \be \al^{-1} \be^{-1}$. 
\end{Fact}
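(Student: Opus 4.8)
The plan is to use the van Kampen--Zariski method for the affine complement $X^{(n)}=\C^n\setminus D$, where $D=(x_1\cdots x_n\cdot F_n(x)=0)$, and to read off the relations $(R_{ij})$ and $(R_k)$ from the local geometry of $D$ along its codimension-two strata. By the affine Lefschetz-type theorem of Hamm--L\^e (Zariski's hyperplane section theorem in the affine setting), for a generic $2$-plane $P\subset\C^n$ the inclusion induces an isomorphism $\pi_1(P\setminus D)\xrightarrow{\ \sim\ }\pi_1(X^{(n)})$, so one reduces to a plane curve $C=P\cap D$. The classical van Kampen--Zariski theorem then presents $\pi_1(P\setminus C)$ in terms of meridians of the points of $C$ on a generic fiber of a pencil; each of these is a meridian of one of the $n+1$ irreducible components $(x_1=0),\dots,(x_n=0),S^{(n)}$ of $D$, and meridians of a fixed component are mutually conjugate. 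Choosing a system of paths from the base point to a generic fiber -- the step that requires care, and the reason one fixes the base point near $(\tfrac1{n^2},\dots,\tfrac1{n^2})$ -- these meridians can be normalized to the loops $\ga_1,\dots,\ga_n,\ga_0$ of Section~\ref{section-preliminaries}, which therefore generate $\pi_1(X^{(n)})$.

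For $(R_{ij})$: near a generic point of $(x_i=0)\cap(x_j=0)$ the divisor $D$ is a normal crossing of two smooth sheets, so a transverse $2$-disk meets $D$ in two lines, the fundamental group of its complement is $\Z^2$, and the two meridians are $\ga_i,\ga_j$ up to simultaneous conjugacy; hence $[\ga_i,\ga_j]=1$. (From the explicit description in Section~\ref{section-preliminaries} one may instead check directly that $\ga_i$ and $\ga_j$ are supported in disjoint coordinate directions of a polydisc about the base point.)

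The relations $(R_k)$ are the heart of the matter and come from $S^{(n)}\cap(x_k=0)$. Grouping the linear factors of $F_n$ by the fixed-point-free involution $a\mapsto a+e_k$ of $\ztwo^n$ gives
\begin{align*}
  F_n(x)=\prod_{[a]}\bigl((1-R_{[a]})^2-x_k\bigr),\qquad
  R_{[a]}=\sum_{j\neq k}(-1)^{a_j}\sqrt{x_j},
\end{align*}
the product running over the $2^{n-1}$ pairs; in particular $F_n|_{x_k=0}=\bigl(\prod_{[a]}(1-R_{[a]})\bigr)^2$ is the square of the Lauricella polynomial $F_{n-1}$ in the remaining variables, so set-theoretically $S^{(n)}\cap(x_k=0)=S^{(n-1)}$. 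At a generic point $p_0$ of this stratum exactly one factor $1-R_{[a_0]}$ vanishes; choosing holomorphic branches of the $\sqrt{x_j}$ with $j\neq k$ (units near $p_0$), one finds that $S^{(n)}$ is smooth at $p_0$ and tangent to $(x_k=0)$, and that in local coordinates $u=1-R_{[a_0]}$, $v=x_k$ the germ of $D$ is $\{v(u^2-v)=0\}$ (times a smooth factor) -- an $A_3$ (tacnode) singularity, two smooth branches with contact of order $2$, whose link is the torus link $T(2,4)$. A transverse $2$-disk $\De$ therefore contributes the local group
\begin{align*}
  \pi_1(\De\setminus D)\cong\pi_1\bigl(S^3\setminus T(2,4)\bigr)
  =\langle\,a,b\mid (ab)^2=(ba)^2\,\rangle ,
\end{align*}
which one gets from van Kampen--Zariski for $\{v(u^2-v)=0\}$ in a small bidisc: projecting to $u$, the only singular value is $u=0$, where the braid monodromy is $\si_1^{4}$ (the root $v=u^2$ winds twice about $v=0$), and the relations $a=\si_1^{4}(a)$, $b=\si_1^{4}(b)$ say exactly that $(ab)^2$ is central, i.e. $(ab)^2=(ba)^2$. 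Under the inclusion-induced map $a,b$ go to a meridian of $S^{(n)}$ and one of $(x_k=0)$; identifying them with $\ga_0$ and $\ga_k$ yields $(R_k)$.

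I expect the principal obstacle to be the book-keeping with base points and connecting paths rather than any single computation: the local analysis determines the \emph{form} of all the relations, and recognizing the factorization of $F_n$ near $(x_k=0)$ -- hence the tacnode with link $T(2,4)$ -- is the decisive geometric input, but to conclude that it is exactly the prescribed loops $\ga_0,\dots,\ga_n$ (and not merely some conjugate meridians) that both generate and satisfy $(R_{ij})$ and $(R_k)$, one must connect each local meridian system to the fixed base point along an explicit path, matching in particular the meridian produced at a tacnode with $\ga_0$, which is defined through the point $(\tfrac1{n^2},\dots,\tfrac1{n^2})\in S^{(n)}$. This last bookkeeping is what is carried out in \cite{G-FC-monodromy}.
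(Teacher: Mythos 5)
Your geometric picture is essentially right, and it matches what actually produces the relations: pairing the linear factors of $F_n$ under $a\mapsto a+e_k$ does show that near a generic point of $S^{(n)}\cap(x_k=0)$ the divisor is locally $v(u^2-v)=0$, so a generic plane section acquires a tacnode with link $T(2,4)$ and local group $\langle a,b\mid (ab)^2=(ba)^2\rangle$; for $n=3$ this is exactly what one sees in Section~\ref{section-3dim}, where the tangencies of the lines $L_i$ with $Q$ yield $(\al_3\al_4)^2=(\al_4\al_3)^2$ and its analogues, and your torus argument for $[\ga_i,\ga_j]=1$ is fine. Your route (generic $2$-plane section plus local links of the strata) differs in flavor from the paper's and \cite{G-FC-monodromy}'s, which slice by a pencil of lines in $\C^n$ and compute explicitly how the $n+2^{n-1}$ intersection points braid as the line is rotated around $(x_k=0)$; the two mechanisms encode the same geometry.

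The genuine gap is that both assertions of the Fact concern the \emph{specific} loops $\ga_0,\dots,\ga_n$, while your argument only delivers statements about unspecified conjugates, and you defer the reduction to the actual $\ga$'s to the very reference being quoted. First, generation: van Kampen--Zariski gives generation by all $n+2^{n-1}$ meridians on a generic fiber, and meridians of a fixed irreducible component are mutually conjugate, but a curve complement need not be generated by one meridian per component (an irreducible curve can have nonabelian complement, e.g.\ Zariski's three-cuspidal quartic, and then no single meridian generates); so the $2^{n-1}$ meridians of $S^{(n)}$ on the fiber cannot simply be ``normalized'' to the one loop $\ga_0$ by a choice of paths. What is needed is the explicit identity $\eta(\ell_{a_1\cdots a_{n-1}})=(\ga_1^{a_1}\cdots\ga_{n-1}^{a_{n-1}})\,\ga_0\,(\ga_1^{a_1}\cdots\ga_{n-1}^{a_{n-1}})^{-1}$ of Fact~\ref{fact-ell}, i.e.\ that the conjugators lie in $\langle\ga_1,\dots,\ga_{n-1}\rangle$, and that is precisely the moving-line braid-monodromy computation, not bookkeeping. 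Second, the tacnode only gives $(w_1\ga_0w_1^{-1}\,w_2\ga_kw_2^{-1})^2=(w_2\ga_kw_2^{-1}\,w_1\ga_0w_1^{-1})^2$ for unknown conjugators $w_1,w_2$; upgrading this to $(\ga_0\ga_k)^2=(\ga_k\ga_0)^2$ on the nose requires the same explicit path tracking (compare how much of Section~\ref{section-3dim} is devoted, already for $n=3$, to relating the $\al_i$'s and $\be_i$'s to $\ga_0,\dots,\ga_3$). Since exactly these steps are delegated to \cite{G-FC-monodromy}, the proposal is an outline of why relations of this shape exist rather than a proof that the prescribed loops generate and satisfy them.
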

In this paper, we discuss the following:
\begin{itemize}
\item another relation in $\pi_1 (X^{(n)})$ for $n\geq 3$,
\item precise calculation of $\pi_1 (X^{(3)})$, 
\item a presentation of $\pi_1 (\tilde{X}^{(3)})$, 
  where $\tilde{X}^{(3)}$ is a $2^3$-covering of $X^{(3)}$. 
\end{itemize}
The main part of this paper is calculation for $n=3$  
(the second and third topics). 
In the following, we explain each topic. 

First, we give another relation in $\pi_1 (X^{(n)})$, by using
similar methods to \cite{G-FC-monodromy}: 
\begin{Th}\label{th-rel}
  For $I=\{ i_1 ,\ldots, i_p\}$, $J=\{ j_1 ,\ldots, j_q\} \subset \{ 1,\ldots ,n\}$ 
  with $p,q \geq 1$, $p+q \leq n-1$ and $I \cap J=\emptyset$, 
  we have
  \begin{align}
    \tag{$R_{IJ}$}\label{rel3}
    [ (\ga_{i_1} \cdots \ga_{i_p})\ga_0 (\ga_{i_1} \cdots \ga_{i_p})^{-1}, 
    (\ga_{j_1} \cdots \ga_{j_q})\ga_0 (\ga_{j_1} \cdots \ga_{j_q})^{-1}]=1 .
  \end{align}
\end{Th}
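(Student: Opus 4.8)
We propose to obtain $(R_{IJ})$ from the local geometry of $S^{(n)}$, following the method of \cite{G-FC-monodromy} used to derive $(R_k)$: whereas $(R_k)$ is the relation at the tangency of $S^{(n)}$ with $(x_k=0)$, here the relevant local picture will be an ordinary crossing, whose relation is a commutator. Put $\ell_a(y)=1-\sum_{k=1}^n(-1)^{a_k}y_k$ for $a=(a_1,\dots,a_n)\in\ztwo^n$, so that under $x_k=y_k^2$ one has $F_n(x)=\prod_{a\in\ztwo^n}\ell_a(y)$; the Galois $2^n$-fold covering $\tilde X^{(n)}\to X^{(n)}$, $(y_1,\dots,y_n)\mapsto(y_1^2,\dots,y_n^2)$, with deck group $\ztwo^n$ acting by sign changes, therefore identifies $\tilde X^{(n)}$ with the complement in $(\C\setminus\{0\})^n$ of the hyperplane arrangement $\{\ell_a=0\}_{a\in\ztwo^n}$. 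For a subset $I=\{i_1,\dots,i_p\}\subset\{1,\dots,n\}$ let $a(I)\in\ztwo^n$ be its indicator vector. Since $\ga_{i_1}\cdots\ga_{i_p}$ has image $a(I)$ and $\ga_0$ has image $0$ under the sign homomorphism $\pi_1(X^{(n)})\to\ztwo^n$, the loop $(\ga_{i_1}\cdots\ga_{i_p})\ga_0(\ga_{i_1}\cdots\ga_{i_p})^{-1}$ lies in the subgroup $\pi_1(\tilde X^{(n)})$; lifting the explicit paths of \cite{G-FC-monodromy} shows it to be a meridian of $\{\ell_{a(I)}=0\}$, since $\ell_{a(I)}$ is the unique factor of $F_n(y^2)$ vanishing at the lift of $(\tfrac1{n^2},\dots,\tfrac1{n^2})$ to the sheet obtained from the base sheet by negating the coordinates indexed by $I$. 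Thus $(R_{IJ})$ is reduced to the commutation, inside $\pi_1(\tilde X^{(n)})$, of suitable meridians of $\{\ell_{a(I)}=0\}$ and $\{\ell_{a(J)}=0\}$.

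The geometric input is that, under the hypotheses, the hyperplanes $\{\ell_{a(I)}=0\}$ and $\{\ell_{a(J)}=0\}$ are distinct and non-parallel, and their intersection flat $F$ (necessarily of codimension $2$) lies on no other member of the arrangement. Indeed, $I\cap J=\emptyset$ gives $\ell_{a(I)}-\ell_{a(J)}=2\bigl(\sum_{k\in I}y_k-\sum_{k\in J}y_k\bigr)$, so on $F$ one has $\sum_{k\in I}y_k=\sum_{k\in J}y_k$ together with $\sum_{k\notin I\cup J}y_k=1$; the latter equation is solvable precisely because $I\cup J\ne\{1,\dots,n\}$, which is the hypothesis $p+q\le n-1$ (if $I\cup J$ were all of $\{1,\dots,n\}$ the two hyperplanes would be parallel and disjoint, and this case is excluded). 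An affine-linear form $\ell_b$ vanishing on $F$ must equal $\alpha\ell_{a(I)}+\beta\ell_{a(J)}$ with $\alpha+\beta=1$ and $\alpha-\beta\in\{1,-1\}$, forcing $b\in\{a(I),a(J)\}$, and a direct check shows that no coordinate hyperplane $\{y_k=0\}$ contains $F$. Hence a generic point $q$ of $F$ lies on exactly the two members $\{\ell_{a(I)}=0\}$, $\{\ell_{a(J)}=0\}$ of the arrangement, which cross transversally there; a small $2$-disc transverse to $F$ at $q$ meets the arrangement in two lines through a single point, so its complement has fundamental group $\Z^2$, and the two local meridians (which are meridians of $\{\ell_{a(I)}=0\}$ and $\{\ell_{a(J)}=0\}$ in $\pi_1(\tilde X^{(n)})$) commute. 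Equivalently, since $q$ lies off the branch locus of $\tilde X^{(n)}\to X^{(n)}$, $S^{(n)}$ has above $q$ an ordinary crossing of two smooth sheets, the associated local relation being a single commutator.

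The main work---and the step I expect to be the obstacle---is the bookkeeping that matches these \emph{a priori} unrelated commuting local meridians at $q$ with the \emph{specific} conjugates $(\ga_{i_1}\cdots\ga_{i_p})\ga_0(\ga_{i_1}\cdots\ga_{i_p})^{-1}$ and $(\ga_{j_1}\cdots\ga_{j_q})\ga_0(\ga_{j_1}\cdots\ga_{j_q})^{-1}$ occurring in $(R_{IJ})$, modulo the already established relations $(R_{ij})$ and $(R_k)$: in an arrangement complement the commutator of two meridians is sensitive to the paths joining them to the base point, so ``nearby meridians commute'' does not by itself suffice. This should be handled exactly as in \cite{G-FC-monodromy}: one carries out their van Kampen--Zariski computation in a one-parameter family of lines (or the fibration) chosen to pass near $q$, arranged so that the branch of $S^{(n)}$ through $q$ corresponding to $I$ is reached from the base point by analytic continuation along $\ga_{i_1}\cdots\ga_{i_p}$, and the branch corresponding to $J$ by continuation along $\ga_{j_1}\cdots\ga_{j_q}$; the normal crossing at $q$ then contributes precisely the displayed relation. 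The conditions $p,q\ge1$ are used here, to ensure that both branches through $q$ are reached from the base point through the generators $\ga_i$ rather than being the branch through the base point itself; together with $p+q\le n-1$ (non-emptiness of $F$) these are exactly the constraints under which the argument goes through.
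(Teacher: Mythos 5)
Your geometric setup is essentially correct, and it agrees with the interpretation the paper itself gives later: under the covering $\phi$ of Section 5, the loop $(\ga_{i_1}\cdots\ga_{i_p})\ga_0(\ga_{i_1}\cdots\ga_{i_p})^{-1}$ does lift to a meridian of the hyperplane $H(a(I))$ (Lemma \ref{lem-covering} and the remark with Table \ref{table:intersection}), and your computation that, for $I\cap J=\emptyset$, $p,q\geq 1$, $p+q\leq n-1$, the flat $H(a(I))\cap H(a(J))$ is a nonempty codimension-two flat lying on no other member of the arrangement (nor on a coordinate hyperplane) is right. The genuine gap is exactly at the point you flag and then wave off: transversality at a generic point $q$ of that flat only gives $[w_1\ga_0 w_1^{-1},\,w_2\ga_0 w_2^{-1}]=1$ for \emph{some} conjugating words $w_1,w_2$ depending on the paths joining the local meridians at $q$ to the base point; nothing in your argument shows that these words can be taken to be, or corrected to, $\ga_{i_1}\cdots\ga_{i_p}$ and $\ga_{j_1}\cdots\ga_{j_q}$. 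Asserting that a van Kampen--Zariski computation can be ``arranged so that'' the two branches through $q$ are reached along precisely those words, and that the crossing ``then contributes precisely the displayed relation,'' is a restatement of the theorem, not a proof of it; it is where the entire difficulty lives.

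The paper's own proof makes clear that this identification is not routine bookkeeping. Its pencil argument does not produce the commutator directly: rotating $r_k$ yields the long product identity (\ref{eq-change-ga}), in which all intermediate loops $\ell_{a_1\cdots a_{p+q-1}0\cdots 0}$ intervene, and extracting (\ref{rel3}) requires an induction on $p+q$ (Lemma \ref{lem-proof}, Claims \ref{claim-1} and \ref{claim-2}) in which relations already established for smaller symmetric difference are used to cancel those intervening factors. The explicit $n=3$ computation shows the same phenomenon concretely: the node of $Q$ encountered at $\la=a_2$ first yields $[\al_4,\al_6]=1$, and only after $\al_6$ is identified with $\al_2\al_3\al_4(\al_2\al_3)^{-1}$ using a \emph{different} degeneration of the pencil, and after the change of generators to the $\be_i$, does the relation take the clean form $[\be_i\be_4\be_i^{-1},\be_j\be_4\be_j^{-1}]=1$. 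So your route (localizing at double points of the arrangement upstairs rather than manipulating a global monodromy identity) is genuinely different and attractive, but as written it stops short of the actual content: you must either carry out the path-tracking from the base point to the crossing point --- which will again involve the intermediate tangencies and crossings and, very plausibly, the relations (\ref{rel1}), (\ref{rel2}) and an induction much like the paper's --- or supply some other argument pinning down the conjugating words. Until that is done, the proof is incomplete.
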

Note that if $n=2$, this relation does not appear, 
and it is shown in \cite{Kaneko} that the relations (\ref{rel1}) and (\ref{rel2})
generate all relations in $\pi_1 (X^{(2)})$, that is, 
\begin{align*}
  \pi_1(X^{(2)})=\langle \ga_0 , \ga_1 , \ga_2  \mid 
  [\ga_1,\ga_2]=1, (\ga_0\ga_1)^2=(\ga_1\ga_0)^2 ,(\ga_0\ga_2)^2=(\ga_2\ga_0)^2 \rangle .
\end{align*}

Second, we prove that if $n=3$, 
the relations (\ref{rel1}), (\ref{rel2}) and (\ref{rel3}) 
generate all relations in $\pi_1 (X^{(3)})$, that is, 
\begin{Th}
  \label{th-3dim}
  \begin{align*}
    \pi_1(X^{(3)})=\left\langle \ga_0,\ga_1,\ga_2,\ga_3  \left| 
        \begin{array}{l}
          [\ga_i,\ga_j]=1, \ [\ga_i \ga_0 \ga_i^{-1}, \ga_j \ga_0 \ga_j^{-1}]=1 
          \ (1\leq i<j \leq 3) \\
          (\ga_0\ga_k)^2=(\ga_k\ga_0)^2 \ (1\leq k \leq 3)
        \end{array}
      \right. \right\rangle .
  \end{align*}
\end{Th}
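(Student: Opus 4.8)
The plan is to compute $\pi_1(X^{(3)})$ from scratch by the Zariski--van Kampen method and then match the result with the presentation in the statement. Following the strategy of \cite{G-FC-monodromy}, consider the projection $p\colon X^{(3)}\to (\C^*)^2$, $(x_1,x_2,x_3)\mapsto(x_1,x_2)$. From the factorization $F_3(x)=\prod_{a_1,a_2\in\ztwo}\bigl((1-(-1)^{a_1}\sqrt{x_1}-(-1)^{a_2}\sqrt{x_2})^2-x_3\bigr)$ one sees that the fibre of $p$ over a generic point of $(\C^*)^2$ is $\C$ with the five points $x_3=0$ and $x_3=(1-(-1)^{a_1}\sqrt{x_1}-(-1)^{a_2}\sqrt{x_2})^2$ removed, so the fundamental group of the generic fibre is free of rank $5$; among these five meridians, the one around $x_3=0$ represents $\ga_3$, while the four around the roots of $F_3$ are meridians of $S^{(3)}$ and hence, since $S^{(3)}$ is irreducible, are all conjugate to $\ga_0$. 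The map $p$ is a locally trivial bundle away from the curves in $(\C^*)^2$ along which these five points degenerate: the lines $\{x_1=1\}$, $\{x_2=1\}$, $\{x_1=x_2\}$, where two of the four roots collide, and the conic $\{F_2(x_1,x_2)=0\}$ with $F_2(x_1,x_2)=(1-x_1-x_2)^2-4x_1x_2$, where a root meets $x_3=0$ (to second order, since $F_3(x_1,x_2,0)=F_2(x_1,x_2)^2$). The Zariski--van Kampen theorem then presents $\pi_1(X^{(3)})$ by the five fibre meridians together with meridians lifted from the base (among which are $\ga_1$ and $\ga_2$), modulo the braid monodromy of $p$ along generating loops of the base, the relations of the base group, and the relations recording that the degenerate fibres of $p$ are glued back in. Alternatively one may restrict to a generic affine plane $P\subset\C^3$ and run the classical Zariski--van Kampen theorem on the degree-$7$ plane curve $P\cap(\{x_1x_2x_3=0\}\cup S^{(3)})$, three lines together with the generic plane section of the Steiner surface, a rational quartic with three nodes.

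The steps I would carry out, in order, are: (1) pin down the discriminant configuration above, choose a generic linear pencil in the base and a compatible system of generating loops so that each special fibre carries a single braid; (2) compute the braid monodromy, i.e.\ the action of these loops on the free group of rank $5$, tracking how the four roots $(1-(-1)^{a_1}\sqrt{x_1}-(-1)^{a_2}\sqrt{x_2})^2$ braid around each component of the discriminant (for instance, around $\{F_2=0\}$ a root winds twice around $x_3=0$, producing a relation of the shape $(R_k)$); (3) assemble the Zariski--van Kampen presentation of $\pi_1(X^{(3)})$; (4) rewrite every generator as a word in $\ga_0,\ga_1,\ga_2,\ga_3$, using irreducibility of $S^{(3)}$ to replace the fibre meridians of $S^{(3)}$ by conjugates of $\ga_0$ and checking that the base meridians other than $\ga_1,\ga_2$ become trivial or become conjugates of these; (5) verify that each relation obtained in step (3), so rewritten, lies in the normal closure of $(R_{ij})$, $(R_k)$, $(R_{IJ})$ in the free group on $\ga_0,\dots,\ga_3$. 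By the Fact and Theorem~\ref{th-rel} we already have a surjection $G\twoheadrightarrow\pi_1(X^{(3)})$ from the group $G$ presented as in the statement, so carrying out step~(5) forces this surjection to be an isomorphism, which is the assertion. The action of $\mathfrak{S}_3$ permuting $x_1,x_2,x_3$ (hence $\ga_1,\ga_2,\ga_3$ and fixing $\ga_0$) can be used to cut down the bookkeeping by a factor of roughly three.

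I expect the main obstacle to be the combination of steps (2) and (5). The braid monodromy is genuinely delicate: the way the four roots collide is governed by the singular structure of $S^{(3)}$ (the three discriminant lines are exactly the images of the three double lines of the Steiner surface, which meet at its triple point $(1,1,1)$), and the resulting presentation is sensitive to the choice of base point and generating loops, so producing a workable normal form already takes care. Step (5) is then a long but in principle routine computation: one must show that all of the relations produced — the commutator relations coming from transverse intersections, the longer relations of type $(R_k)$ coming from the tangencies of $\{x_k=0\}$ with $S^{(3)}$ and the tangency fibres of the projection, and the commutator relations of type $(R_{ij})$ and $(R_{IJ})$ coming from the remaining collisions and glued-back fibres — follow from the three stated families, and, crucially, that the presentation neither collapses further nor retains a hidden extra generator, i.e.\ that precisely the meridians dual to the auxiliary curves $\{x_1=1\},\{x_2=1\},\{x_1=x_2\},\{F_2=0\}$ disappear and nothing survives beyond $\ga_0,\dots,\ga_3$. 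Making that last point watertight, rather than the individual relation checks, is where the argument needs the most care.
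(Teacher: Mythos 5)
Your plan is essentially the paper's proof: the paper carries out exactly your plane-section variant, cutting $X^{(3)}$ by a plane so that $S^{(3)}$ becomes a three-nodal quartic with the coordinate lines (and the trace of the plane at infinity) as bitangents, running the van Kampen--Zariski pencil method to collect all monodromy relations, reducing them to the three stated families, and identifying the pencil meridians with $\gamma_0,\dots,\gamma_3$, with the pre-existing surjection from the presented group closing the argument just as in your step (5). What you defer is precisely the paper's content in Section 4: the monodromy bookkeeping around the $21$ special members of the pencil, the word-by-word reduction of those relations, and the M\"obius-transformation check matching the fibre meridians to the loops $\gamma_j$.
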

To prove this theorem, 
we compute $\pi_1(X^{(3)})$ in detail by using the theorem of van Kampen-Zariski. 
We cut $X^{(3)}$ by a plane and consider a pencil of lines. 
Then we obtain many monodromy relations, and we reduce them 
to those in the theorem. 

Finally, we consider 
a covering space $\tilde{X}^{(n)}$ of $X^{(n)}$
(especially, the case of $n=3$). 
If we put $x_k=\xi_k^2$, then $F_n$ is decomposed into 
$2^n$ linear forms in $\xi_k$'s. 
This means that there exists a $2^n$-covering space $\tilde{X}^{(n)}$ of $X^{(n)}$ 
which is a complement of hyperplanes. 
By using our presentation of $\pi_1(X^{(3)})$ and 
the Reidemeister-Schreier method, 
we also obtain the presentation of $\pi_1 (\tilde{X}^{(3)})$. 
There are several studies for the fundamental group of 
the complements of hyperplane arrangements (see, e.g., \cite{Orlik-Terao}, \cite{Randell}). 
However, it seems difficult to present $\pi_1 (\tilde{X}^{(n)})$ explicitly, 
even if we apply these results. 

The relations (\ref{rel1}), (\ref{rel2}) and (\ref{rel3}) 
generate all relations in $\pi_1 (X^{(n)})$ for $n=2,3$, 
while we do not know if the same claim holds for $n\geq 4$ or not.
As in Section \ref{section-3dim}, a plane cut of $X^{(3)}$ has three nodes, 
and we can interpret that these nodes correspond to three relations (\ref{rel3}). 
However, a plane cut of $X^{(4)}$ has 20 nodes, and 
the number of relations (\ref{rel3}) are 18. 
Thus, it seems that
relations different from the above ones hold.

Since our detailed calculations are very long, we omit some of them
in this paper.
For all the calculations, refer to 
\begin{center}
  \texttt{https://arxiv.org/abs/1710.09594v1}.
\end{center}

\begin{Ack}
  The authors are grateful to Susumu Tanab\'e  
  for helpful discussions. 
\end{Ack}

\section{Preliminaries}\label{section-preliminaries}
We give explicit definitions of the loops $\ga_0 , \ga_1 ,\ldots , \ga_n$. 

Let $\dot{x}=(\frac{1}{2n^2},\ldots ,\frac{1}{2n^2})\in X^{(n)}$ be a base point. 
For $1\leq k \leq n$, let $\ga_k$ be the loop in $X^{(n)}$ defined by
$$
\ga_k :[0,1] \ni \theta \mapsto 
\Bigg(  \frac{1}{2n^2} ,\ldots ,
\underset{k\textrm{-th}}{\frac{e^{2\pi \sqrt{-1}\theta}}{2n^2}},
\ldots ,\frac{1}{2n^2} \Bigg) \in X^{(n)}.
$$
We take a positive real number $\vep_0$ so that 
$\vep_0 <\min \left\{ \frac{1}{2n^2}, \frac{1}{(n-2)^2}-\frac{1}{n^2} \right\}$, 
and we define the loop $\ga_0$ in $X^{(n)}$ as $\ga_0 =\tau_0 \ga'_0 \overline{\tau_0}$, 
where 
\begin{align*}
  \tau_0 &:[0,1] \ni \theta \mapsto 
  \left( (1-\theta) \cdot \frac{1}{2n^2}+\theta \cdot \left( \frac{1}{n^2}-\vep_0 \right) \right) 
  (1,\ldots ,1) \in X^{(n)}, \\
  \ga'_0 &:[0,1] \ni \theta \mapsto 
  \left( \frac{1}{n^2} -\vep_0 e^{2\pi \sqrt{-1}\theta}  \right)
  (1,\ldots ,1) \in X^{(n)},
\end{align*}
and $\overline{\tau_0}$ is the reverse path of $\tau_0$. 
\begin{Rem}
  The loop $\ga_k \ (1\leq k \leq m)$ turns the hyperplane $(x_k=0)$, 
  and $\ga_0$ turns the hypersurface $S^{(n)}$ around the point 
  $\left( \frac{1}{n^2},\ldots, \frac{1}{n^2} \right)$, positively. 
  Note that $\left( \frac{1}{n^2},\ldots ,\frac{1}{n^2} \right)$ is the nearest to the origin 
  in $S^{(n)}\cap (x_1=x_2=\cdots =x_m)=\left\{ \frac{1}{n^2}(1,\ldots ,1),
  \frac{1}{(n-2)^2}(1,\ldots ,1),\ldots  \right\}$. 
\end{Rem}

\section{Proof of Theorem \ref{th-rel}}\label{section-rel}
In this section, we assume $n\geq 3$ and prove Theorem \ref{th-rel}. 
We use similar methods to \cite[Appendix]{G-FC-monodromy}. 
However, we change some notations for our convenience. 

We regard $\C^n$ as a subset of $\P^n$ and put $L_{\infty} =\P^n -\C^n$. 
Then we can consider that $S^{(n)}$ is a hypersurface in $\P^n$, and 
\begin{align*}
  X^{(n)}&=\C^n-\Big( (x_1\cdots x_n=0)\cup S^{(n)} \Big) \\
  &=\P^n -\Big( (x_1\cdots x_n=0)\cup S^{(n)} \cup L_{\infty } \Big).
\end{align*}
By a special case of the Zariski theorem of Lefschetz type 
(see, e.g., \cite[Chapter 4 (3.1)]{Dimca}), 
the inclusion $L\cap X^{(n)} \hookrightarrow X^{(n)}$ 
induces an epimorphism 
$$
\eta : 
\pi_1 \left( L\cap X^{(n)} \right) \to 
\pi_1 (X^{(n)}) ,
$$
for a line $L$ in $\P^n$ which intersects $\P^n -X^{(n)}$ transversally 
and avoids its singular parts. 
Note that generators of 
$\pi_1 (L\cap X^{(n)})$ 
are given by $n+2^{n-1}$ loops going once around
each of the intersection points in $L\cap ((x_1\cdots x_n=0)\cup S^{(n)}) \subset \C^n$. 
To define loops in $X^{(n)}$ explicitly, 
we specify such a line $L$ 
in the following way. 
Let $r_1 ,\ldots ,r_{n-1}$ 
be positive real numbers satisfying 
$$
r_{n-1} <\frac{1}{4} ,\quad r_k <\frac{r_{k+1}}{4} \ (1 \leq k \leq n-2),  
$$
and let $\vep=(\vep_1 ,\ldots ,\vep_{n-1})$ be sufficiently small positive real numbers 
such that $\vep_1 >\cdots >\vep_{n-1}$. 
We consider lines 
\begin{align*}
  L_0 : &(x_1 ,\ldots ,x_{n-1},x_n)
  =(r_1 ,\ldots ,r_{n-1} ,0) 
  +t(0 ,\ldots ,0 ,1) \quad (t\in \C), \\
  L_{\vep} : &(x_1 ,\ldots ,x_{n-1},x_n)
  =(r_1 ,\ldots ,r_{n-1} ,0) 
  +t(\vep_1 ,\ldots ,\vep_{n-1} ,1) \quad (t\in \C)
\end{align*}
in $\C^n$. 
We identify $L_{\vep}$ with $\C$ by the coordinate $t$. 
The intersection point 
$L_{\vep} \cap (x_k=0)$ is coordinated by 
$t=-\frac{r_k}{\vep_k} <0$, for $1 \leq k \leq n-1$. 
The intersection point $L_{\vep} \cap (x_n=0)$
is coordinated by $t=0$.
$L_{\vep}$ and $S^{(n)}$ intersect at $2^{n-1}$ points. 
We coordinate the intersection points $L_{\vep} \cap S^{(n)}$ by 
$t=t_{a_1 \cdots a_{n-1}}, \ (a_1 ,\ldots ,a_{n-1}) \in \ztwo^{n-1}$. 
The correspondence is as follows. 
We denote the coordinates of the intersection points $L_0 \cap S^{(n)}$ by 
$$
t_{a_1 \cdots a_{n-1}} ^{(0)}=
\Bigg( 1-\sum_{k=1}^{n-1} (-1)^{a_k} \sqrt{r_k} \Bigg)^2 .
$$
By this definition, we have 
\begin{align}
  \nonumber
  &t_{a_1 \cdots a_{n-1}}^{(0)} <t_{a'_1 \cdots a'_{n-1}}^{(0)} \\
  \label{eq-order}
  &\Longleftrightarrow  
  \exists r \textrm{ s.t. }a_{i} -a'_{i}=0 \ (i=r+1 ,\ldots ,n-1),\ 
  a_r=0,\ a'_r=1 .
\end{align}
For $(a_1 ,\ldots ,a_{n-1}),(a'_1 ,\ldots ,a'_{n-1})\in \ztwo^{n-1}$, 
we denote $(a_1 ,\ldots ,a_{n-1}) \prec (a'_1 ,\ldots ,a'_{n-1})$ 
when (\ref{eq-order}) holds. 
For example, if $n=4$, then 
$$
t_{000}^{(0)}<t_{100}^{(0)}<t_{010}^{(0)}<t_{110}^{(0)}<
t_{001}^{(0)}<t_{101}^{(0)}<t_{011}^{(0)}<t_{111}^{(0)}. 
$$
Since $L_{\vep}$ is sufficiently close to $L_0$, 
$t_{a_1 \cdots a_{n-1}}$ is supposed to be arranged near to 
$t_{a_1 \cdots a_{n-1}} ^{(0)}$. 

Since $L_0$ does not pass the singular part of $S^{(n)}$, 
for sufficiently small $\vep_k$'s, 
$L_{\vep}$ also avoids the singular parts of $\P^n -X^{(n)}$. 
Thus, 
$\eta_{\vep} : 
\pi_1 \left( L_{\vep}\cap X^{(n)} \right) 
\to \pi_1 (X^{(n)})$ 
is an epimorphism. 

Let $\ell_k$ be the 
loop in $L_{\vep}\cap X^{(n)}$ going once around the intersection point 
$L_{\vep} \cap (x_k=0)$, and let 
$\ell_{a_1 \cdots a_{n-1}}$ be the loop going 
once around the intersection point $t_{a_1 \cdots a_{n-1}}$. 
Each loop approaches the intersection point through 
the upper half-plane of the $t$-space.

As in \cite{G-FC-monodromy}, we have 
\begin{align*}
  &\eta_{\vep} (\ell_k )=\ga_k \ (1\leq k \leq n) ,\quad
  \eta_{\vep} (\ell_{0\cdots 0}) =\ga_0 , \\
  &\ga_i \ga_j =\ga_j \ga_i \quad (1\leq i,j \leq n) . 
\end{align*}
To investigate relations among the $\eta_{\vep}(\ell_{a_1 \cdots a_{n-1}})$'s, 
we consider these loops in 
$L_0 \cap X^{(n)}$. 
By the above definition, we can define 
the $\ell_{a_1 \cdots a_{n-1}}$'s as loops in 
$L_0 \cap X^{(n)}$. 
Since $L_0$ is sufficiently close to $L_{\vep}$, 
the image of $\ell_{a_1 \cdots a_{n-1}}$ under 
$$
\eta : 
\pi_1 \left( L_0 \cap X^{(n)} \right) 
\to \pi_1 (X^{(n)})
$$
coincides with $\eta_{\vep}(\ell_{a_1 \cdots a_{n-1}})$ 
as elements in $\pi_1 (X^{(n)})$. 
Though $\eta$ is not an epimorphism, 
relations among the $\eta (\ell_{a_1 \cdots a_{n-1}})$'s in $\pi_1 (X^{(n)})$ 
can be regarded as those among the $\eta_{\vep}(\ell_{a_1 \cdots a_{n-1}})$'s. 

In \cite{G-FC-monodromy}, we move $L_0$ as follows. 
For $\theta \in [0,1]$, 
let $L(\theta)$ be the line defined by 
\begin{align*}
  L(\theta) :&(x_1 ,\ldots ,x_k ,\ldots ,x_{n-1} ,x_n) \\
  &=(r_1 ,\ldots ,e^{2\pi \sqrt{-1}\theta }r_k ,\ldots ,r_{n-1} ,0) 
  +t(0 ,\ldots ,0 ,1) \quad (t\in \C) .
\end{align*}
Note that $L(0)=L(1)=L_0$. 
We identify $L(\theta)$ with $\C$ by the coordinate $t$. 
It is easy to see that the intersection points of 
$L(\theta)$ and $S^{(n)}$ are given by the following $2^{n-1}$ elements:
$$
t_{a_1 \cdots a_{n-1}} ^{(\theta )}=
\Bigg( 1-\sum_{\substack{j=1\\j \neq k}}^{n-1} (-1)^{a_j} \sqrt{r_j} 
  -(-1)^{a_k} \sqrt{r_k}e^{\pi \sqrt{-1}\theta} \Bigg)^2 .
$$
The points $1-\sum_{j \neq k} (-1)^{a_j} \sqrt{r_j} 
-(-1)^{a_k} \sqrt{r_k}e^{\pi \sqrt{-1}\theta}$ are in the right half-plane 
for any $\theta \in [0,1]$, 
since $\sum_{j=1}^{n-1} \sqrt{r_j} <\sum_{j=1}^{n-1} 2^{-j}<1$. 
Let $\theta$ move from $0$ to $1$, then 
\begin{enumerate}[(a)]
\item $t_{a_1 \cdots a_{k-1} 0 a_{k+1} \cdots a_{m-1}}^{(1)}
  =t_{a_1 \cdots a_{k-1} 1 a_{k+1} \cdots a_{m-1}}^{(0)}$,  
  $t_{a_1 \cdots a_{k-1} 0 a_{k+1} \cdots a_{m-1}}^{(1)}
  =t_{a_1 \cdots a_{k-1} 1 a_{k+1} \cdots a_{m-1}}^{(0)}$, 
\item $t_{a_1 \cdots a_{k-1} 1 a_{k+1} \cdots a_{m-1}}^{(\theta)}$ 
  moves in the upper half-plane, 
\item $t_{a_1 \cdots a_{k-1} 0 a_{k+1} \cdots a_{m-1}}^{(\theta)}$ 
  moves in the lower half-plane. 
\end{enumerate}
For example, the $t_{a_1 a_2 a_3}$'s move as Figure \ref{point-change}, 
for $n=4$ and $k=2$. 
\begin{figure}[h]
  \centering{
  \includegraphics[scale=1.0]{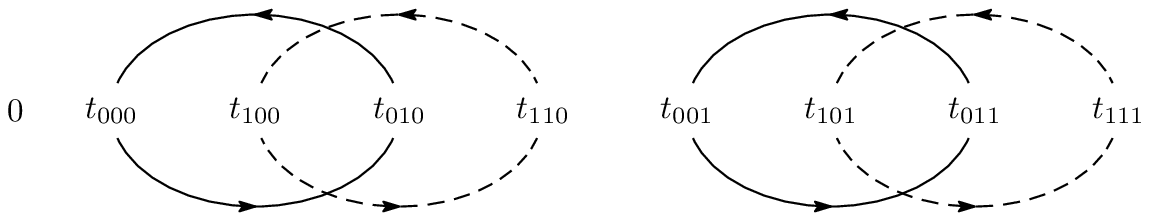} }
  \caption{$t_{a_1 a_2 a_3}$ for $n=4,\ k=2$. \label{point-change}}
\end{figure}

We put $P(\theta) =\C -\{ t_{a_1 \cdots a_{n-1}}^{(\theta)} \mid a_j \in \{ 0,1 \} \}$ 
that is regarded as a subset of $L(\theta )$. 
Let $\vep'$ be a sufficiently small positive real number, 
and we consider the fundamental group $\pi_1 (P(\theta),\vep')$. 
As mentioned above, the $\ell_{a_1 \cdots a_{n-1}}$'s are defined as 
elements in $\pi_1 (P(0),\vep')=\pi_1 (P(1),\vep')$. 
If we move $\theta$ from $0$ to $1$, 
then the $\ell_{a_1 \cdots a_{n-1}}$'s 
define the elements in each $\pi_1 (P(\theta),\vep')$ naturally. 

Note that by this variation, the base point moves around the divisor $(x_k=0)$, 
since the base point $\vep' \in P(\theta)$ corresponds to the point 
$(r_1 ,\ldots ,e^{2\pi \sqrt{-1}\theta }r_k ,\ldots ,r_{m-1} ,\vep') \in L(\theta)$. 
It implies the conjugation by $\ga_k$ in $\pi_1 (X^{(n)})$. 

In \cite{G-FC-monodromy}, we investigate the loops
$\ell_{a_1 \cdots a_{k-1} 1 a_{k+1} \cdots a_{m-1}}$ under this variation, 
and we obtain the following. 
\begin{Fact}[{\cite[Lemma A.1 (i)]{G-FC-monodromy}}]\label{fact-ell}
  We have
  \begin{align*}
    &\eta(\ell_{a_1 \cdots a_{k-1} 1 a_{k+1} \cdots a_{m-1}})
    =\ga_k \cdot \eta(\ell_{a_1 \cdots a_{k-1} 0 a_{k+1} \cdots a_{m-1}}) \cdot \ga_k^{-1} .
  \end{align*}
  Furthermore, we obtain 
  \begin{align*}
    \eta(\ell_{a_1 \cdots a_{n-1}}) 
    =(\ga_1^{a_1} \cdots \ga_{n-1}^{a_{n-1}}) \cdot 
    \ga_0 \cdot
    (\ga_1^{a_1} \cdots \ga_{n-1}^{a_{n-1}})^{-1} .
  \end{align*}
\end{Fact}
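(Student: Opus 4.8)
The plan is to extract both formulas from the one-parameter family of lines $L(\theta)$, $\theta\in[0,1]$, together with the behaviour of the base point noted in the discussion preceding the statement. Transporting loops along this family yields an isomorphism $\Phi_\theta:\pi_1(P(0),\vep')\to\pi_1(P(\theta),\vep')$, and since $L(0)=L(1)=L_0$ the endpoint $\Phi_1$ is an automorphism of $\pi_1(P(0),\vep')$. The crucial first input is the identification of $\eta\circ\Phi_1$. Because the base point $\vep'\in P(\theta)$ corresponds to $(r_1,\ldots,e^{2\pi\sqrt{-1}\theta}r_k,\ldots,r_{n-1},\vep')\in L(\theta)$, whose $k$-th coordinate circles the origin once counterclockwise as $\theta$ runs from $0$ to $1$, the base point traces a loop in $X^{(n)}$ homotopic to $\ga_k$. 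Consequently, for every class $\ell\in\pi_1(P(0),\vep')$ one has $\eta(\Phi_1(\ell))=\ga_k\,\eta(\ell)\,\ga_k^{-1}$ in $\pi_1(X^{(n)})$; this is the mechanism that manufactures the conjugation by $\ga_k$.

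Second, I would compute $\Phi_1$ on the loop $\ell_{a_1\cdots a_{k-1}0a_{k+1}\cdots a_{n-1}}$ (abbreviated $\ell_{\cdots0\cdots}$, the ellipses denoting the frozen entries) using the motion of the punctures in (a)--(c). As $\theta$ increases the puncture starting at $t^{(0)}_{\cdots0\cdots}$ travels through the lower half-plane and, by (a), lands at $t^{(0)}_{\cdots1\cdots}$, while the puncture starting at $t^{(0)}_{\cdots1\cdots}$ travels through the upper half-plane to $t^{(0)}_{\cdots0\cdots}$; the remaining punctures move in pairs of the same type. The aim is to verify that the loop encircling $t^{(0)}_{\cdots0\cdots}$ (approached from above) becomes exactly $\ell_{\cdots1\cdots}$, i.e. $\Phi_1(\ell_{\cdots0\cdots})=\ell_{\cdots1\cdots}$, at least after applying $\eta$. \textbf{This braid bookkeeping is the main obstacle.} The punctures with $a_k=1$ all rise into the upper half-plane while those with $a_k=0$ sink into the lower half-plane, so the $a_k=0$ punctures preserve their mutual order and slide beneath the rising ones; the content of the step is to check that none of these crossings links $\ell_{\cdots0\cdots}$ with another generator, so that no spurious conjugating factor survives in $\pi_1(X^{(n)})$. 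The configuration for $n=4$, $k=2$ in Figure \ref{point-change} is the model case from which this is read off.

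Granting Step 2 and combining with Step 1 gives $\eta(\ell_{\cdots1\cdots})=\eta(\Phi_1(\ell_{\cdots0\cdots}))=\ga_k\,\eta(\ell_{\cdots0\cdots})\,\ga_k^{-1}$, which is the first displayed formula. The explicit formula then follows by induction on the number of nonzero entries. The base case is $\eta(\ell_{0\cdots0})=\ga_0$. For the inductive step, write $W=\ga_1^{a_1}\cdots\ga_{n-1}^{a_{n-1}}$ with $a_k=0$ and flip the $k$-th entry: by the first formula and the commutativity $\ga_i\ga_j=\ga_j\ga_i$ $(1\le i,j\le n)$, which lets $\ga_k$ pass freely through $W$,
\begin{align*}
  \eta(\ell_{\cdots1\cdots})
  =\ga_k\,(W\ga_0 W^{-1})\,\ga_k^{-1}
  =(W\ga_k)\,\ga_0\,(W\ga_k)^{-1},
\end{align*}
and $W\ga_k=\ga_1^{a_1}\cdots\ga_k^{1}\cdots\ga_{n-1}^{a_{n-1}}$ is precisely the conjugating word for the flipped index. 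The same commutativity shows that the order of the bit-flips is immaterial, so the induction is consistent and produces the second formula for every $(a_1,\ldots,a_{n-1})\in\ztwo^{n-1}$.
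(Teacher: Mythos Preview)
Your proposal is correct and follows precisely the mechanism the paper sets up before stating this Fact: the one-parameter family $L(\theta)$, the puncture motions (a)--(c), and the observation that the base point winds once around $(x_k=0)$, yielding conjugation by $\ga_k$ in $\pi_1(X^{(n)})$. The paper itself does not prove the statement but cites it from \cite{G-FC-monodromy}; your braid-tracking step (what you flag as ``the main obstacle'') is exactly what that reference carries out, and your induction on the number of nonzero entries, using the commutativity $\ga_i\ga_j=\ga_j\ga_i$, is the intended derivation of the second formula.
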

By considering the case 
$k=1$ and $\ell_{0\cdots 0} \in \pi_1 (P(0),\vep')$,  
we also obtain the following
\begin{Fact}[{\cite[Lemma A.1 (ii)]{G-FC-monodromy}}]
  We have
  \begin{align*}
    \eta(\ell_{0 \cdots 0}) 
    =\ga_1 \cdot \eta(\ell_{0\cdots 0} \ell_{10\cdots 0} \ell_{0\cdots 0}^{-1}) 
    \cdot \ga_1^{-1}, 
  \end{align*}
  and this implies $(\ga_0 \ga_1)^2 =(\ga_1 \ga_0)^2$.
\end{Fact}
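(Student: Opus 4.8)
The plan is to reuse the variation $L(\theta)$ from the proof of Fact~\ref{fact-ell}, now specialized to $k=1$, but to track the loop $\ell_{0\cdots 0}$ instead of a loop with $a_k=1$. The essential difference is the following. In Fact~\ref{fact-ell} the tracked point $t_{a_1\cdots a_{k-1}1a_{k+1}\cdots}$ has $a_k=1$, so by (b) it sweeps through the \emph{upper} half-plane; since the meridian also approaches from above, it is carried over its neighbour without winding, which is why no braiding term appears there. Here the tracked point $t_{0\cdots 0}$ has $a_1=0$, so by (c) it sweeps through the \emph{lower} half-plane and passes \emph{under} its right neighbour $t_{10\cdots 0}$, while the meridian still approaches from above; this forces a nontrivial braiding term. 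By the ordering $\prec$, the punctures $t_{0\cdots 0}$ and $t_{10\cdots 0}$ are the two smallest, and by (a) the variation merely exchanges the two positions $t_{0\cdots 0}^{(0)}$ and $t_{10\cdots 0}^{(0)}$; because of the rapid decrease $r_k<r_{k+1}/4$ the two paths stay inside a disk meeting no other puncture, so the variation restricts to a half-twist of these two punctures alone and $\ell_{0\cdots 0}$ braids with $t_{10\cdots 0}$ and with nothing else.

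First I would separate the two contributions of the variation, exactly as in Fact~\ref{fact-ell}. The base point $\vep'\in P(\theta)$ corresponds to $(r_1e^{2\pi\sqrt{-1}\theta},r_2,\ldots,r_{n-1},\vep')\in L(\theta)$, which circles $(x_1=0)$ once; this contributes the same conjugation by $\ga_1$ as in Fact~\ref{fact-ell}, giving $\eta\big(\ell_{0\cdots 0}^{(1)}\big)=\ga_1^{-1}\,\eta(\ell_{0\cdots 0})\,\ga_1$. The remaining, new ingredient is the effect of the half-twist in the $t$-plane, in which $t_{0\cdots 0}$ slides under $t_{10\cdots 0}$ while the latter slides over it; reading off the resulting image of the based meridian gives
\begin{align*}
  \ell_{0\cdots 0}^{(1)}=\ell_{0\cdots 0}\,\ell_{10\cdots 0}\,\ell_{0\cdots 0}^{-1}
\end{align*}
in $\pi_1(P(1),\vep')=\pi_1(P(0),\vep')$. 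Applying $\eta$ and combining with the base-point conjugation yields the asserted identity $\eta(\ell_{0\cdots 0})=\ga_1\,\eta(\ell_{0\cdots 0}\ell_{10\cdots 0}\ell_{0\cdots 0}^{-1})\,\ga_1^{-1}$.

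For the final implication I would substitute the closed forms from Fact~\ref{fact-ell}, namely $\eta(\ell_{0\cdots 0})=\ga_0$ and $\eta(\ell_{10\cdots 0})=\ga_1\ga_0\ga_1^{-1}$. The identity becomes $\ga_0=\ga_1\,\ga_0(\ga_1\ga_0\ga_1^{-1})\ga_0^{-1}\,\ga_1^{-1}$, and multiplying on the right by $\ga_1\ga_0\ga_1$ and cancelling gives $\ga_0\ga_1\ga_0\ga_1=\ga_1\ga_0\ga_1\ga_0$, i.e.\ $(\ga_0\ga_1)^2=(\ga_1\ga_0)^2$. The main obstacle is the middle step: verifying the precise half-twist word, i.e.\ that the orientation of the swap (fixed by the counterclockwise rotation $x_1\mapsto e^{2\pi\sqrt{-1}\theta}x_1$, equivalently $\sqrt{x_1}\mapsto e^{\pi\sqrt{-1}\theta}\sqrt{x_1}$) produces the conjugator $\ell_{0\cdots 0}$ on the correct side, and that no puncture other than $t_{10\cdots 0}$ is crossed. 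A useful internal consistency check is that the same bookkeeping applied to an upper-moving point reproduces the clean transfer $\ell_{\cdots 1\cdots}^{(1)}=\ell_{\cdots 0\cdots}$ underlying Fact~\ref{fact-ell}.
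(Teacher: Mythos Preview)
Your argument is correct and follows exactly the route the paper indicates. Note that the paper does not actually prove this statement: it is recorded as a \emph{Fact} cited from \cite[Lemma~A.1~(ii)]{G-FC-monodromy}, with the single hint ``by considering the case $k=1$ and $\ell_{0\cdots 0}\in\pi_1(P(0),\vep')$''. Your write-up reconstructs precisely that argument: specialize the variation $L(\theta)$ to $k=1$, observe via (a)--(c) that $t_{0\cdots 0}$ and $t_{10\cdots 0}$ execute a half-twist with the former passing through the lower half-plane, read off the Hurwitz image $\ell_{0\cdots 0}\mapsto \ell_{0\cdots 0}\ell_{10\cdots 0}\ell_{0\cdots 0}^{-1}$ (consistent with the product $\ell_{0\cdots 0}\ell_{10\cdots 0}$ around the local disk being preserved and with $\ell_{10\cdots 0}\mapsto\ell_{0\cdots 0}$ from Fact~\ref{fact-ell}), and combine with the base-point conjugation by $\ga_1$. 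The deduction of $(\ga_0\ga_1)^2=(\ga_1\ga_0)^2$ by substituting $\eta(\ell_{0\cdots 0})=\ga_0$ and $\eta(\ell_{10\cdots 0})=\ga_1\ga_0\ga_1^{-1}$ is also exactly as intended.

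One small wording issue: when you say ``the variation restricts to a half-twist of these two punctures alone'', bear in mind that for $k=1$ \emph{all} punctures move in $a_1$-paired half-twists; the correct (and sufficient) statement is that the pair $\{t_{0\cdots 0},t_{10\cdots 0}\}$ moves inside a disk disjoint from all other punctures (this is what the inequalities $r_k<r_{k+1}/4$ guarantee), so $\ell_{0\cdots 0}$ braids only with $t_{10\cdots 0}$. With that clarified, your proof matches the cited argument.
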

\begin{Rem}\label{rem-change-index}
  Changing the definitions of $L_0$ and $L_{\vep}$, 
  we obtain the relations 
  $$
  (\ga_0 \ga_k)^2 =(\ga_k \ga_0)^2 \quad (1\leq k \leq n).
  $$
  For example, if we put 
  $$
  L_{\vep} : (x_1 ,x_2, \ldots ,x_n)
  =(0,r_1 ,\ldots ,r_{n-1} ) 
  +t(1, \vep_1 ,\ldots ,\vep_{n-1} ) \quad (t\in \C) ,
  $$
  then a similar argument shows 
  $(\ga_0 \ga_2)^2 =(\ga_2 \ga_0)^2$. 
  By the same reason, 
  for the proof of Theorem \ref{th-rel}, 
  it suffices to show that  
  \begin{align}
    \label{eq-simple-rel}
    [ (\ga_{1} \cdots \ga_{p-1}\ga_{p+q})\ga_0 (\ga_{1} \cdots \ga_{p-1}\ga_{p+q})^{-1}, 
    (\ga_{p} \cdots \ga_{p+q-1})\ga_0 (\ga_{p} \cdots \ga_{p+q-1})^{-1}]=1 ,
  \end{align}
  for any $p,q\geq 1$, $p+q\leq n-1$. 
  (Though the indices are complicated, this formulation is convenient for our proof.)
  Note that if $p=1$, we regard $\ga_{1}\cdots \ga_{p-1}=1$. 
\end{Rem}

Now, we investigate changes of other loops to prove Theorem \ref{th-rel}. 
For $p,q\geq 1$, $p+q\leq n-1$, we consider 
$$
\ell_{1\cdots 1 0 \cdots 0 0 \cdots 0}=
\ell_{\underbrace{1\cdots 1}_{p-1}\underbrace{0\cdots 0}_{q}\underbrace{0\cdots 0}_{n-p-q}} ,
$$
and its change for $k=p+q$. 
By the properties (a), (b), (c) and the proof of \cite[Lemma A.2]{G-FC-monodromy} 
show the following. 
\begin{Lem}\label{lem-variation}
  \label{lem-loop-change}
  $\ell_{1\cdots 1 0 \cdots 0 0 \cdots 0}$ in $\pi_1 (P(0),\vep')$ 
  changes into 
  \begin{align*}
    \left(\prod_{(a_1,\ldots ,a_{p+q-1})\in \ztwo^{p+q-1}}^{\prec} 
      \ell_{a_1 \cdots a_{p+q-1} 0\cdots 0} \right)
    \ell_{1\cdots 1 0 \cdots 0 1 \cdots 0}
    \left(\prod_{(a_1,\ldots ,a_{p+q-1})\in \ztwo^{p+q-1}}^{\prec} 
      \ell_{a_1 \cdots a_{p+q-1} 0\cdots 0} \right)^{-1}
  \end{align*}
  in $\pi_1 (P(1),\vep')$, where the notation $\displaystyle \prod^{\prec}$ means 
  the product multiplying in ascending order of indices with respect to $\prec$. 
\end{Lem}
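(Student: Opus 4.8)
The plan is to run the same braid-monodromy analysis as in \cite[Lemma A.2]{G-FC-monodromy}, now for the loop $\ell_{1\cdots 1 0\cdots 0 0\cdots 0}$ and the variation with $k=p+q$. First I would record, from properties (a)--(c), how the intersection points move as $\theta$ goes from $0$ to $1$. Group the $2^{n-1}$ points $t^{(0)}_{a_1\cdots a_{n-1}}$ into clusters according to the coordinates $a_{p+q+1},\ldots,a_{n-1}$; by the choice $r_j<r_{j+1}/4$ these clusters are far apart on the real axis, and by the order relation (\ref{eq-order}) within each cluster the sub-block with $a_{p+q}=0$ lies entirely to the left of the one with $a_{p+q}=1$. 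Properties (b) and (c) then say that, as $\theta$ runs from $0$ to $1$, the left sub-block of each cluster descends into the lower half-plane and slides rigidly to the right, exchanging positions with its right sub-block, which rises into the upper half-plane and slides left; distinct clusters drift apart rather than collide, so the motion is a disjoint union over the clusters. The point $t^{(0)}_{1\cdots 1 0\cdots 0 0\cdots 0}$ lies in the leftmost cluster, where all of $a_{p+q+1},\ldots,a_{n-1}$ vanish, and in its left sub-block, which is therefore the set of the $2^{p+q-1}$ leftmost of all the points, namely the $t^{(0)}_{a_1\cdots a_{p+q-1}0\cdots 0}$ with $(a_1,\ldots,a_{p+q-1})\in\ztwo^{p+q-1}$; and by (a) it terminates at the original position of $t^{(0)}_{1\cdots 1 0\cdots 0 1\cdots 0}$, the subscript obtained by switching the $(p+q)$-th coordinate.

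Next I would track the loop through this isotopy. Since the base point $\vep'$ lies to the left of every intersection point, $\ell_{1\cdots 1 0\cdots 0 0\cdots 0}$ is one of the standard free generators of $\pi_1(P(0),\vep')$: an approach path from $\vep'$ along the upper half-plane, a small positive circle about its point, and the reversed path. Under the isotopy the circle becomes a small circle about the terminal position, that is about the point of $\ell_{1\cdots 1 0\cdots 0 1\cdots 0}$; and the approach path, dragged along as its endpoint sinks and slides to the right beneath the configuration, becomes a loop running once around the whole block of the $2^{p+q-1}$ leftmost positions --- now vacated by the left sub-block and occupied by the risen right sub-block --- followed by the standard approach path to the terminal position. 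The first piece is, in the free group, exactly the $\prec$-ordered product $\prod^{\prec}_{(a_1,\ldots,a_{p+q-1})}\ell_{a_1\cdots a_{p+q-1}0\cdots 0}$ of the standard generators attached to those leftmost positions (these standard generators correspond to the $\theta=0$ positions, and the $2^{p+q-1}$ leftmost positions carried precisely the left sub-block, in $\prec$-order). Assembling the pieces yields the asserted formula. As a consistency check I would push the identity through $\eta$ and compare with Fact \ref{fact-ell}.

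The hard part will be this second step: justifying rigorously that the dragged approach path reduces exactly to ``loop once around the $2^{p+q-1}$ leftmost positions, then standard approach to the terminal position'', with no stray winding around the ascending right sub-block --- whose strands pass over the path and so unlink from it --- and with the leftmost-position generators occurring once each in $\prec$-order. This is where the precise shape of the subscript and the bounds $p,q\ge 1$ and $p+q\le n-1$ enter, so that the leftmost cluster and its two sub-blocks are as described and the terminal point exists; it is, in effect, \cite[Lemma A.2]{G-FC-monodromy} redone for this loop. I would organize it by first pinning down the local model inside one cluster --- a block of $2^{p+q-1}$ consecutive punctures sliding under the adjacent block --- and then using the separation $r_j<r_{j+1}/4$ to decouple the clusters.
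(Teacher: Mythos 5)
Your proposal is correct and follows essentially the same route as the paper, whose own ``proof'' is just the observation that properties (a)--(c) together with the argument of \cite[Lemma A.2]{G-FC-monodromy} give the statement: you reconstruct exactly that braid-monodromy/dragging analysis (cluster separation from $r_j<r_{j+1}/4$, sub-block swap with the $a_{p+q}=0$ block passing below and the $a_{p+q}=1$ block above, endpoints given by (a)), and the resulting conjugator is indeed the $\prec$-ordered product over the leftmost $2^{p+q-1}$ positions. The step you flag as needing rigor is precisely the content the paper outsources to the cited lemma, so nothing essential is missing.
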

For example, 
$$
\prod_{(a_1,a_2)\in \ztwo^{2}}^{\prec} \ell_{a_1 a_2 0 0}
=\ell_{0000}\ell_{1000}\ell_{0100}\ell_{1100}. 
$$ 
Since this variation corresponds to the conjugation by $\ga_k=\ga_{p+q}$, 
Fact \ref{fact-ell} and Lemma \ref{lem-variation} implies 
\begin{align*}
  &(\ga_{1}\cdots \ga_{p-1})\ga_0 (\ga_{1}\cdots \ga_{p-1})^{-1}\\
  &=\ga_{p+q}
  \Bigg(\prod_{(a_1,\ldots ,a_{p+q-1})\in \ztwo^{p+q-1}}^{\prec} 
  (\ga_{1}^{a_1}\cdots \ga_{p+q-1}^{a_{p+q-1}})\ga_0 
  (\ga_{1}^{a_1}\cdots \ga_{p+q-1}^{a_{p+q-1}})^{-1} \Bigg) \\
  &\quad \cdot (\ga_{1} \cdots \ga_{p-1}\ga_{p+q})\ga_0 (\ga_{1} \cdots \ga_{p-1}\ga_{p+q})^{-1} \\
  &\quad \cdot \Bigg(\prod_{(a_1,\ldots ,a_{p+q-1})\in \ztwo^{p+q-1}}^{\prec} 
  (\ga_{1}^{a_1}\cdots \ga_{p+q-1}^{a_{p+q-1}})\ga_0 
  (\ga_{1}^{a_1}\cdots \ga_{p+q-1}^{a_{p+q-1}})^{-1} \Bigg)^{-1} 
  \ga_{p+q}^{-1} .
\end{align*}
Note that the first factor of $\displaystyle \prod^{\prec}$
is $\ga_0$. 
Multiplying $\ga_0^{-1} \ga_{p+q}^{-1}$ by left and $\ga_{p+q} (\cdots)$ by right, we obtain 
\begin{align}
  \label{eq-change-ga}
  &\ga_0^{-1} \ga_{p+q}^{-1}(\ga_{1}\cdots \ga_{p-1})\ga_0 (\ga_{1}\cdots \ga_{p-1})^{-1} 
  \cdot \ga_{p+q} \cdot \ga_0 \\
  \nonumber
  &\cdot \Bigg(\prod_{\substack{(a_1,\ldots ,a_{p+q-1})\in \ztwo^{p+q-1}\\ 
      (a_1,\ldots ,a_{p+q-1}) \neq (0,\ldots ,0)}}^{\prec} 
  (\ga_{1}^{a_1}\cdots \ga_{p+q-1}^{a_{p+q-1}})\ga_0 
  (\ga_{1}^{a_1}\cdots \ga_{p+q-1}^{a_{p+q-1}})^{-1} \Bigg)\\
  \nonumber
  &=
  \Bigg(\prod_{\substack{(a_1,\ldots ,a_{p+q-1})\in \ztwo^{p+q-1}\\ 
      (a_1,\ldots ,a_{p+q-1}) \neq (0,\ldots ,0)}}^{\prec} 
  (\ga_{1}^{a_1}\cdots \ga_{p+q-1}^{a_{p+q-1}})\ga_0 
  (\ga_{1}^{a_1}\cdots \ga_{p+q-1}^{a_{p+q-1}})^{-1} \Bigg) \\
  \nonumber
  &\quad \cdot (\ga_{1} \cdots \ga_{p-1}\ga_{p+q})\ga_0 (\ga_{1} \cdots \ga_{p-1}\ga_{p+q})^{-1} . 
\end{align}
We prove Theorem \ref{th-rel} by using this equality. 
Before starting the proof, we give a useful equality:
\begin{align}
  \label{eq-rel2-2}
  \ga_k^{-1} \ga_0 \ga_k \ga_0 =\ga_0 \ga_k \ga_0 \ga_k^{-1} , 
\end{align}
which is equivalent to the relation (\ref{rel2}). 
We also note that the relations (\ref{rel3}) is equivalent to
\begin{align*}
  [ (\ga_{i_1} \cdots \ga_{i_p})^{-1} \ga_0 (\ga_{i_1} \cdots \ga_{i_p}), 
  (\ga_{j_1} \cdots \ga_{j_q})^{-1} \ga_0 (\ga_{j_1} \cdots \ga_{j_q})]=1, 
\end{align*}
by (\ref{rel1}).

\begin{proof}[Proof of Theorem \ref{th-rel}]
  We show the theorem by induction on $p+q\geq 2$. 
  As mentioned in Remark \ref{rem-change-index}, 
  it is sufficient to show (\ref{eq-simple-rel}) for each $p,q$. 
  Considering the conjugation by $\ga_l$'s, we have the following lemma. 
  \begin{Lem}\label{lem-proof}
    Assume that we have proved (\ref{eq-simple-rel}) for any $p,q$ with $p+q\leq k-1$. 
    Then we obtain the relation (\ref{rel3}) for 
    $$
    I=\{ i_1,\ldots ,i_r \} ,\ J=\{ j_1 ,\ldots ,j_s \} \subset \{ 1,\ldots ,n\}
    $$
    which satisfy $I\not\subset J$, $I\not\supset J$ and $\#(I\triangle J)\leq k-1$. 
    Here, $I\triangle J$ is the symmetric difference of $I$ and $J$. 
  \end{Lem}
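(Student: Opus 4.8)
The plan is to deduce the relation (\ref{rel3}) for the given $I,J$ from the special relations (\ref{eq-simple-rel}) in two steps: a \emph{relabelling} step that reduces everything to disjoint index sets of a standard shape, and a \emph{conjugation} step that handles the overlap $I\cap J$. The conjugation step is the real content of the lemma; the relabelling step is just the $S_n$-symmetry already exploited in Remark \ref{rem-change-index}.

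For the conjugation step I would set $C=I\cap J$, $A=I\setminus J$, $B=J\setminus I$, so that $A,B,C$ are pairwise disjoint, $A$ and $B$ are nonempty (because $I\not\subset J$ and $I\not\supset J$), and $\#A+\#B=\#(I\triangle J)\leq k-1$. Since all $\ga_i$ ($1\leq i\leq n$) commute by (\ref{rel1}), writing $g_A=\prod_{i\in A}\ga_i$, $g_B=\prod_{i\in B}\ga_i$, $g_C=\prod_{i\in C}\ga_i$ for fixed orderings of the factors, one may rewrite the two conjugates appearing in (\ref{rel3}) as
\begin{align*}
  (\ga_{i_1}\cdots\ga_{i_r})\,\ga_0\,(\ga_{i_1}\cdots\ga_{i_r})^{-1}
    &= g_C\,\bigl(g_A\,\ga_0\,g_A^{-1}\bigr)\,g_C^{-1},\\
  (\ga_{j_1}\cdots\ga_{j_s})\,\ga_0\,(\ga_{j_1}\cdots\ga_{j_s})^{-1}
    &= g_C\,\bigl(g_B\,\ga_0\,g_B^{-1}\bigr)\,g_C^{-1}.
\end{align*}
Because conjugation by $g_C$ is a group automorphism, $[\,g_C u g_C^{-1},\,g_C v g_C^{-1}\,]=g_C\,[u,v]\,g_C^{-1}$, so the commutator in (\ref{rel3}) equals $g_C\,[\,g_A\ga_0 g_A^{-1},\,g_B\ga_0 g_B^{-1}\,]\,g_C^{-1}$ and hence is trivial if and only if $[\,g_A\ga_0 g_A^{-1},\,g_B\ga_0 g_B^{-1}\,]=1$, i.e. (\ref{rel3}) holds for the disjoint pair $(A,B)$.

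It then remains to produce (\ref{rel3}) for an arbitrary disjoint pair $(A,B)$ of nonempty sets with $\#A+\#B\leq k-1$. Here I would invoke the $S_n$-symmetry: since $x_1\cdots x_n$ and $F_n(x)$ are symmetric, permuting the coordinates preserves $X^{(n)}$ and fixes the base point $\dot{x}=(\tfrac1{2n^2},\dots,\tfrac1{2n^2})$, so each $\sigma\in S_n$ induces an automorphism $\sigma_*$ of $\pi_1(X^{(n)},\dot{x})$ with $\sigma_*(\ga_k)=\ga_{\sigma(k)}$ ($1\leq k\leq n$) and $\sigma_*(\ga_0)=\ga_0$, the latter because the path $\ga_0=\tau_0\ga'_0\overline{\tau_0}$ lies on the invariant line $\C\cdot(1,\dots,1)$. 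Taking $p=\#A$, $q=\#B$ (so $p,q\geq1$, $p+q\leq k-1$), choosing $\sigma\in S_n$ that maps $\{1,\dots,p-1,p+q\}$ onto $A$ and $\{p,\dots,p+q-1\}$ onto $B$, and applying $\sigma_*$ to (\ref{eq-simple-rel}) (available by hypothesis) — then reordering factors by (\ref{rel1}) — yields exactly $[\,g_A\ga_0 g_A^{-1},\,g_B\ga_0 g_B^{-1}\,]=1$. I do not expect a genuine obstacle in any of this; the only point that requires care is checking the action of coordinate permutations on the distinguished loops, in particular that $\ga_0$ is fixed, which is immediate from its symmetric definition.
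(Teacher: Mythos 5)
Your proof is correct and matches the paper's intended argument: the paper justifies this lemma in one line (``considering the conjugation by $\ga_l$'s''), which is exactly your conjugation by $g_C=\prod_{l\in I\cap J}\ga_l$ reducing $(R_{IJ})$ to the disjoint pair $(I\setminus J,\,J\setminus I)$, with the relabelling of indices supplied by the symmetry already invoked in Remark \ref{rem-change-index}. Your explicit formulation of that symmetry as an $S_n$-action inducing automorphisms of $\pi_1(X^{(n)})$ fixing $\ga_0$ is simply a cleaner statement of what the paper does by permuting the roles of the coordinates in the choice of the lines $L_0$, $L_{\vep}$.
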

  First, we show the case $p+q=2$. 
  In this case, we have only to show that 
  \begin{align}
    \label{eq-proof-2}
    [ \ga_1\ga_0 \ga_{1}^{-1}, \ga_2\ga_0 \ga_2^{-1}]=1 .
  \end{align}
  The equality (\ref{eq-change-ga}) for $p=q=1$ is 
  \begin{align*}
    \ga_0^{-1} \ga_2^{-1} \cdot 1 \cdot \ga_0 \cdot 1^{-1} \cdot \ga_2 
    \cdot \ga_0 \cdot (\ga_1\ga_0 \ga_1^{-1} ) 
    =\ga_1\ga_0 \ga_1^{-1} \cdot \ga_2\ga_0 \ga_2^{-1} .
  \end{align*}
  By (\ref{eq-rel2-2}), the left-hand side equals to 
  $$
  \ga_0^{-1} \ga_2^{-1} \ga_0 \ga_2 \ga_0 \ga_1\ga_0 \ga_1^{-1} 
  =\ga_0^{-1} \ga_0 \ga_2 \ga_0 \ga_2^{-1}  \ga_1\ga_0 \ga_1^{-1}
  =\ga_2 \ga_0 \ga_2^{-1} \cdot \ga_1\ga_0 \ga_1^{-1} .
  $$
  Thus (\ref{eq-proof-2}) is proved. 
  
  Next, we assume that we have proved (\ref{eq-simple-rel}) for any $p,q$ with $p+q\leq k-1$
  (recall Lemma \ref{lem-proof}), and prove (\ref{eq-simple-rel}) in the case $p+q=k$. 
  \begin{Claim}\label{claim-1}
    If $(1,\ldots ,\overset{p-1}{1} ,0,\ldots ,0)\prec (a_1,\ldots ,a_{p+q-1})\preceq (1,\ldots ,1)$ and 
    $(a_1,\ldots ,a_{p+q-1}) \neq (0,\ldots ,\overset{p-1}{0} ,1,\ldots ,1)$, 
    then we have
    \begin{align*}
      [(\ga_{1}^{a_1}\cdots \ga_{p+q-1}^{a_{p+q-1}})\ga_0 
      (\ga_{1}^{a_1}\cdots \ga_{p+q-1}^{a_{p+q-1}})^{-1},
      (\ga_{1} \cdots \ga_{p-1}\ga_{p+q})\ga_0 (\ga_{1} \cdots \ga_{p-1}\ga_{p+q})^{-1}]=1 .
    \end{align*}
  \end{Claim}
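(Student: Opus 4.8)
The plan is to deduce Claim \ref{claim-1} directly from Lemma \ref{lem-proof} by a short combinatorial verification. First I would pass to ``support sets'': by the relations (\ref{rel1}) the generators $\ga_1,\dots,\ga_n$ commute with one another, so for a multi-index $(a_1,\dots,a_{p+q-1})\in\ztwo^{p+q-1}$ the element $\ga_1^{a_1}\cdots\ga_{p+q-1}^{a_{p+q-1}}$ equals $\prod_{i\in A}\ga_i$, where $A=\{\,i : 1\le i\le p+q-1,\ a_i=1\,\}$; likewise $\ga_1\cdots\ga_{p-1}\ga_{p+q}=\prod_{i\in B}\ga_i$ with $B=\{1,\dots,p-1\}\cup\{p+q\}$. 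Thus the commutator relation asserted in Claim \ref{claim-1} is precisely the relation (\ref{rel3}) for the pair $I=A$, $J=B$, and it suffices to verify that $A$ and $B$ satisfy the hypotheses of Lemma \ref{lem-proof} at the current induction level (where $k=p+q$): namely $A\not\subset B$, $A\not\supset B$, and $\#(A\triangle B)\le k-1=p+q-1$.

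For incomparability: since $A\subseteq\{1,\dots,p+q-1\}$ we have $p+q\in B\setminus A$, so $B\not\subseteq A$; and, by the definition of $\prec$, the hypothesis $(1,\dots,\overset{p-1}{1},0,\dots,0)\prec(a_1,\dots,a_{p+q-1})$ says exactly that $A\not\subseteq\{1,\dots,p-1\}$, which together with $p+q\notin A$ forces $A\not\subseteq B$. For the size estimate, write $A=A_1\sqcup A_2$ with $A_1=A\cap\{1,\dots,p-1\}$ and $A_2=A\cap\{p,\dots,p+q-1\}$; then $A\cap B=A_1$ and $\#B=p$, so $\#(A\triangle B)=\#A+\#B-2\#(A\cap B)=p+\#A_2-\#A_1$. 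Since $\#A_1\ge 0$ and $\#A_2\le q$, this is at most $p+q$, with equality only when $A_1=\emptyset$ and $A_2=\{p,\dots,p+q-1\}$, that is, when $(a_1,\dots,a_{p+q-1})=(0,\dots,\overset{p-1}{0},1,\dots,1)$ --- the single multi-index excluded in the hypothesis. Hence for every admissible multi-index $\#(A\triangle B)\le p+q-1=k-1$, and Lemma \ref{lem-proof} yields the relation (\ref{rel3}) for $I=A$, $J=B$, which is exactly Claim \ref{claim-1}.

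There is no genuine obstacle here; the entire content is the bookkeeping that isolates the unique ``bad'' multi-index and identifies it with the one removed by hypothesis. The two points that need care are the orientation of the order $\prec$ --- because the last coordinate is the most significant, $(a_1,\dots,a_{p+q-1})\preceq(1,\dots,\overset{p-1}{1},0,\dots,0)$ is equivalent to $a_i=0$ for all $i\ge p$, i.e.\ to $A\subseteq\{1,\dots,p-1\}$, so the negation of this is exactly the lower bound appearing in the claim --- and the degenerate case $p=1$, where $\{1,\dots,p-1\}=\emptyset$ and $B=\{q+1\}$; in that case $A\cap B=A_1=\emptyset$, the identity $\#(A\triangle B)=\#A_2+1$ still holds, and the exceptional multi-index is again $(0,\dots,0,1,\dots,1)$ (all $q$ of the last coordinates equal to $1$), so the argument is unchanged.
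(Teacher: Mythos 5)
Your argument is correct and is essentially the paper's own proof: both set $I=\{i\mid a_i=1\}$ and $J=\{1,\ldots,p-1,p+q\}$, check $I\not\subset J$, $I\not\supset J$ and $\#(I\triangle J)\le p+q-1$ (using the excluded multi-index $(0,\ldots,0,1,\ldots,1)$ to rule out equality with $p+q$), and then invoke Lemma \ref{lem-proof}. You merely make explicit the cardinality computation, the reading of the order $\prec$, and the degenerate case $p=1$, which the paper leaves implicit.
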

  \begin{proof}[Proof of Claim]
    We put $I=\{ i\mid a_i=1 \}$ and $J=\{ 1 ,\ldots ,p-1,p+q\}$, 
    and we show $I$ and $J$ satisfy the conditions in Lemma \ref{lem-proof}. 
    Clearly, $p+q \in J-I$ and hence $I\not\supset J$. 
    Since $(1,\ldots ,\overset{p-1}{1} ,0,\ldots ,0)\prec (a_1,\ldots ,a_{p+q-1})$, 
    there exists $p\leq i \leq p+q-1$ such that $a_i=1$. 
    This implies that $I\not\subset J$. 
    Because of $I\cup J \subset \{1 ,\ldots ,p+q\}$ and 
    $(a_1,\ldots ,a_{p+q-1}) \neq (0,\ldots ,\overset{p-1}{0} ,1,\ldots ,1)$,  
    we obtain $\#(I\triangle J)\leq p+q-1=k-1$, and hence 
    $I$ and $J$ satisfy the conditions in Lemma \ref{lem-proof}.
    Thus, the claim is proved. 
  \end{proof}
  By applying this claim to the right-hand side of (\ref{eq-change-ga}), we have
  \begin{align}
    \label{eq-change-ga-2}
    &\ga_0^{-1} \ga_{p+q}^{-1}(\ga_{1}\cdots \ga_{p-1})\ga_0 (\ga_{1}\cdots \ga_{p-1})^{-1} 
    \cdot \ga_{p+q} \cdot \ga_0 \\
    \nonumber
    &\cdot
    \Bigg(\prod_{(0,\ldots ,0) \prec (a_1,\ldots ,a_{p+q-1}) \prec (0,\ldots ,0,1,\ldots ,1)}^{\prec} 
    (\ga_{1}^{a_1}\cdots \ga_{p+q-1}^{a_{p+q-1}})\ga_0 
    (\ga_{1}^{a_1}\cdots \ga_{p+q-1}^{a_{p+q-1}})^{-1} \Bigg)\\
    \nonumber
    &\cdot (\ga_{p}\cdots \ga_{p+q-1})\ga_0 (\ga_{p}\cdots \ga_{p+q-1})^{-1} \\   
    \nonumber
    &=
    \Bigg(\prod_{(0,\ldots ,0) \prec (a_1,\ldots ,a_{p+q-1}) \prec (0,\ldots ,0,1,\ldots ,1)}^{\prec} 
    (\ga_{1}^{a_1}\cdots \ga_{p+q-1}^{a_{p+q-1}})\ga_0 
    (\ga_{1}^{a_1}\cdots \ga_{p+q-1}^{a_{p+q-1}})^{-1} \Bigg) \\
    \nonumber
    &\quad \cdot (\ga_{p}\cdots \ga_{p+q-1})\ga_0 (\ga_{p}\cdots \ga_{p+q-1})^{-1}
    \cdot (\ga_{1} \cdots \ga_{p-1}\ga_{p+q})\ga_0 (\ga_{1} \cdots \ga_{p-1}\ga_{p+q})^{-1} .
  \end{align}
  We rewrite the first line:
  \begin{align}
    \label{eq-left-LHS1}
    &\ga_0^{-1} \ga_{p+q}^{-1}(\ga_{1}\cdots \ga_{p-1})\ga_0 (\ga_{1}\cdots \ga_{p-1})^{-1} 
    \cdot \ga_{p+q} \cdot \ga_0 \\
    \nonumber
    &=\ga_0^{-1} (\ga_{1}\cdots \ga_{p-1}) \cdot \ga_{p+q}^{-1} \ga_0 \ga_{p+q}
    \cdot (\ga_{1}\cdots \ga_{p-1})^{-1} \ga_0 (\ga_{1}\cdots \ga_{p-1})  \\
    \nonumber
    &\quad \cdot (\ga_{1}\cdots \ga_{p-1})^{-1} 
    \quad (\textrm{we can use the induction hypothesis.})\\
    \nonumber
    \nonumber
    &=(\ga_{1}\cdots \ga_{p-1})
    \cdot \ga_{p+q}^{-1} \ga_0 \ga_{p+q} \cdot (\ga_{1}\cdots \ga_{p-1})^{-1} .
  \end{align}
  \begin{Claim}\label{claim-2}
    This product commutes with
    $(\ga_{1}^{a_1}\cdots \ga_{p+q-1}^{a_{p+q-1}})\ga_0 
    (\ga_{1}^{a_1}\cdots \ga_{p+q-1}^{a_{p+q-1}})^{-1}$, 
    for $(0,\ldots ,0) \prec (a_1,\ldots ,a_{p+q-1}) \prec (1\ldots ,\overset{p-1}{1},0,\ldots ,0)$. 
  \end{Claim}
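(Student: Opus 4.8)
The plan is to reduce the asserted commutation, after a single conjugation, to an instance of (\ref{rel3}) that is already available from the induction hypothesis via Lemma \ref{lem-proof}.

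First I would unpack the constraint on the index $(a_1,\ldots,a_{p+q-1})$. Spelling out the definition of $\prec$ from (\ref{eq-order}), the condition $(0,\ldots,0)\prec(a_1,\ldots,a_{p+q-1})\prec(1,\ldots,\overset{p-1}{1},0,\ldots,0)$ forces $a_i=0$ for every $i$ with $p\le i\le p+q-1$, and it forces $(a_1,\ldots,a_{p-1})$ to be neither $(0,\ldots,0)$ nor $(1,\ldots,1)$. Hence, putting $I=\{\,i\mid a_i=1\,\}$ and $K=\{1,\ldots,p-1\}\setminus I$, both $I$ and $K$ are nonempty subsets of $\{1,\ldots,p-1\}$, and — using (\ref{rel1}) to ignore the order of the factors — the second entry of the commutator in the claim is just $\left(\prod_{i\in I}\ga_i\right)\ga_0\left(\prod_{i\in I}\ga_i\right)^{-1}$.

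Next, recall from (\ref{eq-left-LHS1}) that the product in question equals $(\ga_1\cdots\ga_{p-1})\ga_{p+q}^{-1}\ga_0\ga_{p+q}(\ga_1\cdots\ga_{p-1})^{-1}$. I would then conjugate the whole commutator by $(\ga_1\cdots\ga_{p-1})^{-1}$; since conjugation is an automorphism, triviality of the commutator is unaffected. The first entry collapses to $\ga_{p+q}^{-1}\ga_0\ga_{p+q}$, and, using only the commutativity relations (\ref{rel1}) (in particular $(\ga_1\cdots\ga_{p-1})^{-1}\prod_{i\in I}\ga_i=\prod_{i\in K}\ga_i^{-1}$), the second entry becomes $\left(\prod_{i\in K}\ga_i\right)^{-1}\ga_0\left(\prod_{i\in K}\ga_i\right)$. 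Thus the claim is equivalent to
\begin{align*}
\left[\ga_{p+q}^{-1}\ga_0\ga_{p+q},\ \left(\prod_{i\in K}\ga_i\right)^{-1}\ga_0\left(\prod_{i\in K}\ga_i\right)\right]=1 ,
\end{align*}
which, by the equivalent (inverse) form of (\ref{rel3}) noted after (\ref{eq-rel2-2}), is exactly the relation (\ref{rel3}) for the pair $I'=\{p+q\}$, $J'=K$.

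Finally I would verify that this instance is covered by Lemma \ref{lem-proof}. Since $K\subseteq\{1,\ldots,p-1\}$ is nonempty and disjoint from $\{p+q\}$, we have $I'\not\subset J'$ and $I'\not\supset J'$, and $\#(I'\triangle J')=1+\#K\le 1+(p-1)=p\le p+q-1=k-1$, the last inequality using $q\ge 1$. As we are assuming (\ref{eq-simple-rel}) for all $p',q'$ with $p'+q'\le k-1$, Lemma \ref{lem-proof} yields (\ref{rel3}) for $(I',J')$, which proves the claim. I expect the one genuinely delicate point to be the combinatorial step of reading off from the $\prec$-constraint that $a_i=0$ for $i\ge p$ and that $I$ and $K$ are both nonempty; everything afterward is bookkeeping with pairwise-commuting generators, the only subtlety being the (easy but essential) use of $q\ge 1$ to keep the symmetric-difference bound within reach of the induction hypothesis.
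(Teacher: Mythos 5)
Your proof is correct and follows essentially the same route as the paper: after noting that the $\prec$-constraints force $a_i=0$ for $i\ge p$, you conjugate by $(\ga_1\cdots\ga_{p-1})^{-1}$ and rewrite the second element via the complementary index set $K$ (the paper's $(\ga_1^{1-a_1}\cdots\ga_{p-1}^{1-a_{p-1}})$), reducing the claim to the inverse form of (\ref{rel3}) for $\{p+q\}$ and $K$, which the induction hypothesis supplies through Lemma \ref{lem-proof}. Your explicit verification of the symmetric-difference bound $1+\#K\le p+q-1$ is exactly the point the paper leaves implicit, so no changes are needed.
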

  \begin{proof}[Proof of Claim]
    Since $(0,\ldots ,0) \prec (a_1,\ldots ,a_{p+q-1}) \prec (1\ldots ,1,0,\ldots ,0)$, 
    we have $a_i=0$ for $i\geq p$ and 
    \begin{align*}
      &(\ga_{1}^{a_1}\cdots \ga_{p+q-1}^{a_{p+q-1}})\ga_0 
      (\ga_{1}^{a_1}\cdots \ga_{p+q-1}^{a_{p+q-1}})^{-1} \\
      &=(\ga_{1}^{a_1}\cdots \ga_{p-1}^{a_{p-1}})\ga_0 
      (\ga_{1}^{a_1}\cdots \ga_{p-1}^{a_{p-1}})^{-1} \\
      &=(\ga_{1}\cdots \ga_{p-1})
      (\ga_{1}^{1-a_1}\cdots \ga_{p-1}^{1-a_{p-1}})^{-1} \ga_0 
      (\ga_{1}^{1-a_1}\cdots \ga_{p-1}^{1-a_{p-1}})
      (\ga_{1}\cdots \ga_{p-1})^{-1} .
    \end{align*}
    Thus, the claim is equivalent to 
    $$
    [\ga_{p+q}^{-1} \ga_0 \ga_{p+q}, 
    (\ga_{1}^{1-a_1}\cdots \ga_{p-1}^{1-a_{p-1}})^{-1} \ga_0 
    (\ga_{1}^{1-a_1}\cdots \ga_{p-1}^{1-a_{p-1}})]=1 . 
    $$
    This relation follows from the induction hypothesis and 
    $(1-a_1,\ldots ,1-a_{p-1})\neq (0,\ldots ,0)$. 
  \end{proof}
  Since (\ref{eq-left-LHS1}) is changed into 
  \begin{align*}
    &(\ga_{1}\cdots \ga_{p-1})
    \cdot \ga_{p+q}^{-1} \ga_0 \ga_{p+q} \cdot (\ga_{1}\cdots \ga_{p-1})^{-1} \\
    &=(\ga_{1}\cdots \ga_{p-1})
    \cdot \ga_{p+q}^{-1} \ga_0 \ga_{p+q} 
    \cdot \ga_0 \ga_0^{-1} 
    \cdot (\ga_{1}\cdots \ga_{p-1})^{-1} \\
    &=(\ga_{1}\cdots \ga_{p-1})
    \cdot \ga_0 \ga_{p+q} \ga_0 \ga_{p+q}^{-1} \ga_0^{-1}
    \cdot (\ga_{1}\cdots \ga_{p-1})^{-1} \\
    &=(\ga_{1}\cdots \ga_{p-1})\ga_0 (\ga_{1}\cdots \ga_{p-1})^{-1} \cdot 
    (\ga_{1}\cdots \ga_{p-1}\ga_{p+q}) \ga_0 (\ga_{1}\cdots \ga_{p-1}\ga_{p+q})^{-1} \\
    &\quad \cdot \left( (\ga_{1}\cdots \ga_{p-1}) \ga_0(\ga_{1}\cdots \ga_{p-1})^{-1} \right)^{-1} ,
  \end{align*}
  the left-hand side of (\ref{eq-change-ga-2}) is 
  \begin{align}
    \label{eq-LHS}
    &\Bigg(\prod_{(0,\ldots ,0) \prec (a_1,\ldots ,a_{p+q-1}) \prec (1\ldots ,1,0,\ldots ,0)}^{\prec} 
    (\ga_{1}^{a_1}\cdots \ga_{p+q-1}^{a_{p+q-1}})\ga_0 
    (\ga_{1}^{a_1}\cdots \ga_{p+q-1}^{a_{p+q-1}})^{-1} \Bigg) \\
    \nonumber
    &\cdot (\ga_{1}\cdots \ga_{p-1}) \ga_0 (\ga_{1}\cdots \ga_{p-1})^{-1}
    \cdot (\ga_{1}\cdots \ga_{p-1}\ga_{p+q}) \ga_0 (\ga_{1}\cdots \ga_{p-1}\ga_{p+q})^{-1} \\
    \nonumber
    &\cdot \Bigg(\prod_{(1\ldots ,1,0,\ldots ,0) \prec (a_1,\ldots ,a_{p+q-1}) \prec (0,\ldots ,0,1,\ldots ,1)}^{\prec} 
    (\ga_{1}^{a_1}\cdots \ga_{p+q-1}^{a_{p+q-1}})\ga_0 
    (\ga_{1}^{a_1}\cdots \ga_{p+q-1}^{a_{p+q-1}})^{-1} \Bigg) \\
    \nonumber
    &\cdot (\ga_{p}\cdots \ga_{p+q-1})\ga_0 (\ga_{p}\cdots \ga_{p+q-1})^{-1} .
  \end{align}
  By Claim \ref{claim-1}, $(\ga_{1}\cdots \ga_{p-1}\ga_{p+q}) \ga_0 (\ga_{1}\cdots \ga_{p-1}\ga_{p+q})^{-1}$
  commutes with the third line. 
  Then (\ref{eq-LHS}) is equal to 
  \begin{align*}
    &\Bigg(\prod_{(0,\ldots ,0) \prec (a_1,\ldots ,a_{p+q-1}) \prec (0,\ldots ,0,1,\ldots ,1)}^{\prec} 
    (\ga_{1}^{a_1}\cdots \ga_{p+q-1}^{a_{p+q-1}})\ga_0 
    (\ga_{1}^{a_1}\cdots \ga_{p+q-1}^{a_{p+q-1}})^{-1} \Bigg) \\
    &\cdot (\ga_{1}\cdots \ga_{p-1}\ga_{p+q}) \ga_0 (\ga_{1}\cdots \ga_{p-1}\ga_{p+q})^{-1} 
    \cdot (\ga_{p}\cdots \ga_{p+q-1})\ga_0 (\ga_{p}\cdots \ga_{p+q-1})^{-1} .
  \end{align*}
  Therefore, (\ref{eq-change-ga-2}) implies the commutativity (\ref{eq-simple-rel}).
\end{proof}
\section{Presentation of $\pi_1 (X^{(3)})$}
\label{section-3dim}
Hereafter, we mainly consider the case of $n=3$. 
In this section, we prove Theorem \ref{th-3dim}. 

To prove the theorem, 
we consider a plane cut of $X^{(3)}$. 
In the projective space $\P^3$, the defining equation of $S^{(3)}$ is
\begin{align*}
  &(\sqrt{x_0}-\sqrt{x_1}-\sqrt{x_2}-\sqrt{x_3})(\sqrt{x_0}+\sqrt{x_1}-\sqrt{x_2}-\sqrt{x_3})\\
  &\cdot (\sqrt{x_0}-\sqrt{x_1}+\sqrt{x_2}-\sqrt{x_3})(\sqrt{x_0}+\sqrt{x_1}+\sqrt{x_2}-\sqrt{x_3})\\
  &\cdot (\sqrt{x_0}-\sqrt{x_1}-\sqrt{x_2}+\sqrt{x_3})(\sqrt{x_0}+\sqrt{x_1}-\sqrt{x_2}+\sqrt{x_3})\\
  &\cdot (\sqrt{x_0}-\sqrt{x_1}+\sqrt{x_2}+\sqrt{x_3})(\sqrt{x_0}+\sqrt{x_1}+\sqrt{x_2}+\sqrt{x_3})\\
  &=\left( 2(x_0^2 +x_1^2 +x_2^2 +x_3^2) -(x_0 +x_1 +x_2 +x_3)^2\right)^2 -64x_0 x_1 x_2 x_3 .
\end{align*}
By \cite[Chapter XVII, \S 3, Ex.~11]{Hilton}, 
a plane cut (substituting $x_i$'s for linear forms)
of $S^{(3)}$ is a quartic with four bitangents 
$$
x_i=0 \quad (i=0,1,2,3),
$$
and with three nodes 
$$
x_i=x_j,\ x_k=x_l \quad (\{ i,j,k,l\} =\{ 0,1,2,3\}).
$$
We cut $S^{(3)}$ by $H\simeq\P^2$ with coordinates $(x,y,z)$ as 
\begin{align}
  \label{eq-plane-parameter}
  x_0=x-4z,\ x_1=-x-y,\ x_2=y-x,\ x_3=-x+z. 
\end{align}
Then, the defining equations of the components of $(\P^3-X^{(3)})\cap H$ are as follows:
\begin{align*}
  L_0&=(x_0=0)\cap H :x-4z=0, \\
  L_1&=(x_1=0)\cap H :-x-y=0, \\
  L_2&=(x_2=0)\cap H :y-x=0, \\
  L_3&=(x_3=0)\cap H :-x+z=0, \\
  Q&=S^{(3)}\cap H : 
  (4x^2+4y^2-32xz+25z^2)^2=64(y^2-x^2)(x-z)(x-4z).
\end{align*}
By using dehomogenized coordinate $(x,y)$ (put $z=1$), 
their expressions in $\C^2$ are given as 
\begin{align*}
  L_0&:x-4=0, \qquad
  L_1:x+y=0, \\
  L_2&:y-x=0, \qquad
  L_3:x-1=0, \\
  Q&:(4x^2+4y^2-32x+25)^2=64(y^2-x^2)(x-1)(x-4).
\end{align*}
Note that the line at infinity $(z=0)\subset H$ is not 
a component of $(\P^3-X^{(3)})\cap H$. 
By Zariski theorem of Lefschetz type (see, e.g., \cite[Chapter 4 (1.17)]{Dimca}), 
the inclusion $X^{(3)}\cap H \hookrightarrow X^{(3)}$
induces an isomorphism
\begin{align}
  \label{eq-isom-plane}
  \pi_1( X^{(3)}\cap H) \overset{\sim}{\longrightarrow} \pi_1(X^{(3)}) .
\end{align}

\subsection{Preliminary}
To compute $\pi_1( X^{(3)}\cap H)$, 
we consider $\{ \CL_{\la}: y=\la (x+1)\}_{\la\in \C} \subset H$ which is a pencil of lines 
through $(-1,0)\in \C^2$.  

We summarize some numerical data. See also Figure \ref{fig-sing3}. 
\begin{itemize}
\item $Q$ has three nodes $(\frac{5}{2},0)$, $(\frac{3}{2},\pm 1)$. 
\item $L_0$ is tangent to $Q$ at $(4, \pm \frac{\sqrt{39}}{2})$. 
\item $L_1$ is tangent to $Q$ at $(2+\frac{\sqrt{14}}{4},-2-\frac{\sqrt{14}}{4})$ 
  and $(2-\frac{\sqrt{14}}{4}, -2+\frac{\sqrt{14}}{4})$. 
\item $L_2$ is tangent to $Q$ at $(2+\frac{\sqrt{14}}{4}, 2+\frac{\sqrt{14}}{4})$ 
  and $(2-\frac{\sqrt{14}}{4}, 2-\frac{\sqrt{14}}{4})$. 
\item $L_3$ is tangent to $Q$ at $(1, \pm \frac{\sqrt{3}}{2})$. 
\item The intersection points of $\CL_0$ (: $y=0$) and $Q$ are 
  $(\frac{5}{2},0)$ (double root), $(\frac{11}{10}\pm \frac{\sqrt{-1}}{5},0)$. 
\item The line $\CL_{\la}$ is not generic for $X^{(3)}\cap H$ if and only if 
  $\la$ coincides with $0$, $\pm a_1$, $\ldots$, $\pm a_{10}$ or $a_{11}=\infty$ 
  which is given in Table \ref{table:ai}. 
  Note that each of $\pm a_1$, $\ldots$, $\pm a_{10}$ is a real number. 
\end{itemize}
\begin{figure}[h]
  \centering{
  \includegraphics[scale=0.8]{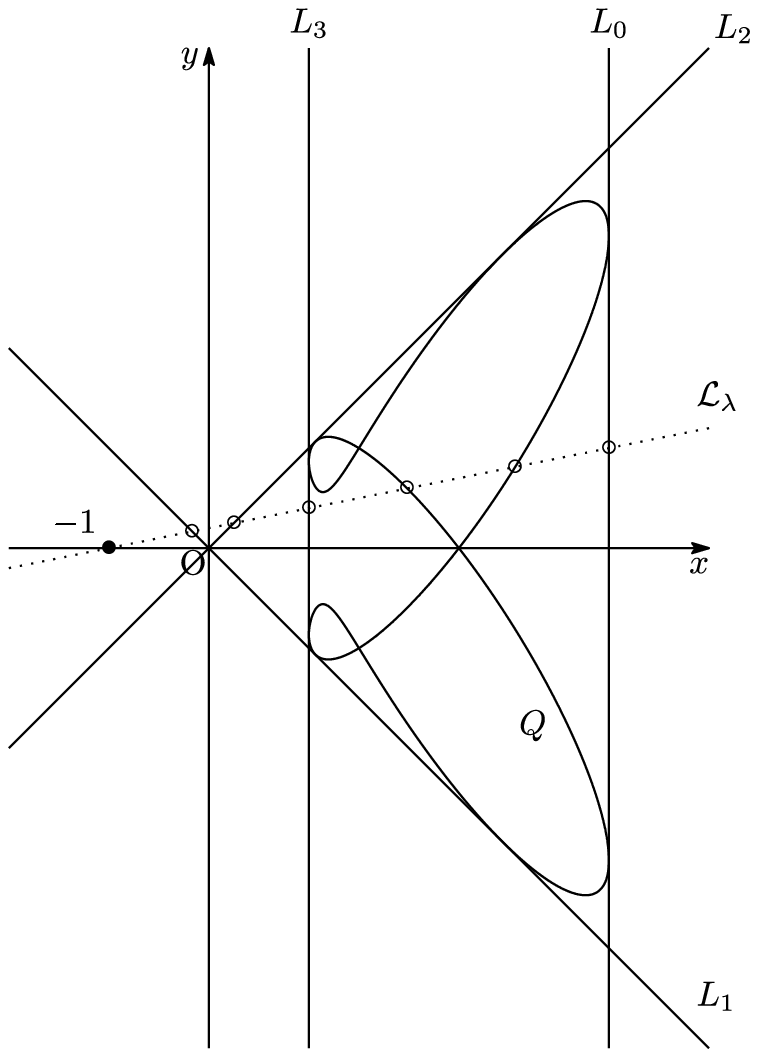} 
  }
  \caption{$X^{(3)}\cap H \subset \R^2$}
  \label{fig-sing3}
\end{figure}
\begin{table}
  \begin{tabular}{|l|l|}
    \hline
    &$\CL_{0}$ passes through \\ 
    & the node $(\frac{5}{2},0)\in Q$ 
    and the intersection point $(0,0)= L_1 \cap L_2$. \\ \hline
    $a_1 \fallingdotseq 0.2607431304$ 
    & $\CL_{a_1}$ is tangent to $Q$. \\ \hline
    $a_2 = 0.4 $ 
    & $\CL_{a_2}$ passes through the node $(\frac{3}{2},1)\in Q$.  \\ \hline  
    $a_3 \fallingdotseq 0.4330127020$ 
    & $\CL_{a_3}$ passes through the tangent point $(1, \frac{\sqrt{3}}{2})\in L_3 \cap Q$. \\ \hline
    $a_4 = 0.5$ 
    & $\CL_{a_4}$ passes through the intersection point $(1, 1)= L_2 \cap L_3$.\\ \hline  
    $a_5 \fallingdotseq 0.5156413111$ 
    & $\CL_{a_5}$ passes through the tangent point 
    $(2-\frac{\sqrt{14}}{4}, 2-\frac{\sqrt{14}}{4})\in L_2 \cap Q$. \\ \hline
    $a_6 \fallingdotseq 0.5196653275$ 
    & $\CL_{a_6}$ is tangent to $Q$. \\ \hline  
    $a_7 \fallingdotseq 0.6244997998$ 
    & $\CL_{a_7}$ passes through the tangent point $(4,\frac{\sqrt{39}}{2})\in L_0 \cap Q$. \\ \hline
    $a_8 \fallingdotseq 0.7458971504$ 
    & $\CL_{a_8}$ passes through the tangent point 
    $(2+\frac{\sqrt{14}}{4}, 2+\frac{\sqrt{14}}{4})\in L_2 \cap Q$. \\ \hline  
    $a_9 \fallingdotseq 0.7574500843$ 
    & $\CL_{a_9}$ is tangent to $Q$. \\ \hline
    $a_{10} = 0.8 $ 
    & $\CL_{a_{10}}$ passes through the intersection point $(4, 4)= L_0 \cap L_2$. \\ \hline  
    $a_{11} =\infty $ 
    & $\CL_{a_{11}}(:x=-1)$ passes through the intersection point $L_0 \cap L_3$.  \\ \hline
  \end{tabular}
  \caption{List of $a_i$'s}\label{table:ai}
\end{table}

\subsection{Computation of $\pi_1( X^{(3)}\cap H)$}
We compute $\pi_1( X^{(3)}\cap H)$ precisely. 
By the theorem of van Kampen-Zariski (see, e.g., \cite[Chapter 4 (3.15)]{Dimca}), 
all relations in $\pi_1( X^{(3)}\cap H) \simeq \pi_1(X^{(3)})$ are 
obtained from the monodromy relations around the 21 points 
$0$, $\pm a_1$, $\ldots$, $\pm a_{10}$ 
(note that the relation around $a_{11}=\infty$ follows from the others). 

Since $X^{(3)}\cap H$ is invariant under $[x:y:z]\mapsto [x:-y:z]$, 
the monodromy relations around $-a_1$, $\ldots$, $-a_{10}$ 
are obtained by a discussion parallel to those around $a_1$, $\ldots$, $a_{10}$. 

We fix a positive real number $a_0$ such that $0<a_0 <a_1$. 
First, we move $\la$ from $a_0$ to $a_{11}=\infty$. 
\begin{enumerate}[(1)]
\setcounter{enumi}{-1}
\item At $\la =a_0$, $\CL_{a_0}$ is a generic line for $X^{(3)}\cap H$.  
  We put $\{ h_1, \ldots ,h_8 \}=(\P^3 -X^{(3)})\cap H \cap \CL_{\la}$, which are indexed as follows.
  \begin{align*}
    \begin{array}{|l|c|c|c|c|c|}
      \hline 
      &h_1 & h_2 & h_3 & h_4,h_5,h_6,h_7 & h_8 \\ \hline
      \textrm{component} 
      &L_1 & L_2 & L_3 & Q & L_0 \\ \hline
    \end{array}
  \end{align*}
  Here, we suppose that $h_6 < h_7$ are real numbers, and $h_4$, $h_5$ are complex numbers 
  satisfying $\im (h_5) < 0< \im (h_4)$. 
  We take generators $\al_1, \ldots ,\al_8$ 
  of $\pi_1 (X^{(3)}\cap H \cap \CL_{a_0}) \simeq \pi_1 (\P^1 -\{ 8\textrm{ points}\})$ 
  as Figure \ref{fig-a0} (for simplicity, we consider $\sqrt{-1}\infty$ 
  as the base point in our pictures, 
  though we should take $(-1,0)\in \CL_{\la}$ as a base point). 
  Note that $\al_i$ is a loop going once around $h_i$ via the upper half-plane. 
  By the definition, we have a relation
  \begin{align*}
    \al_1 \cdots \al_8 =1.
  \end{align*}
  \begin{figure}[h]
    \centering{
      \includegraphics[scale=0.8]{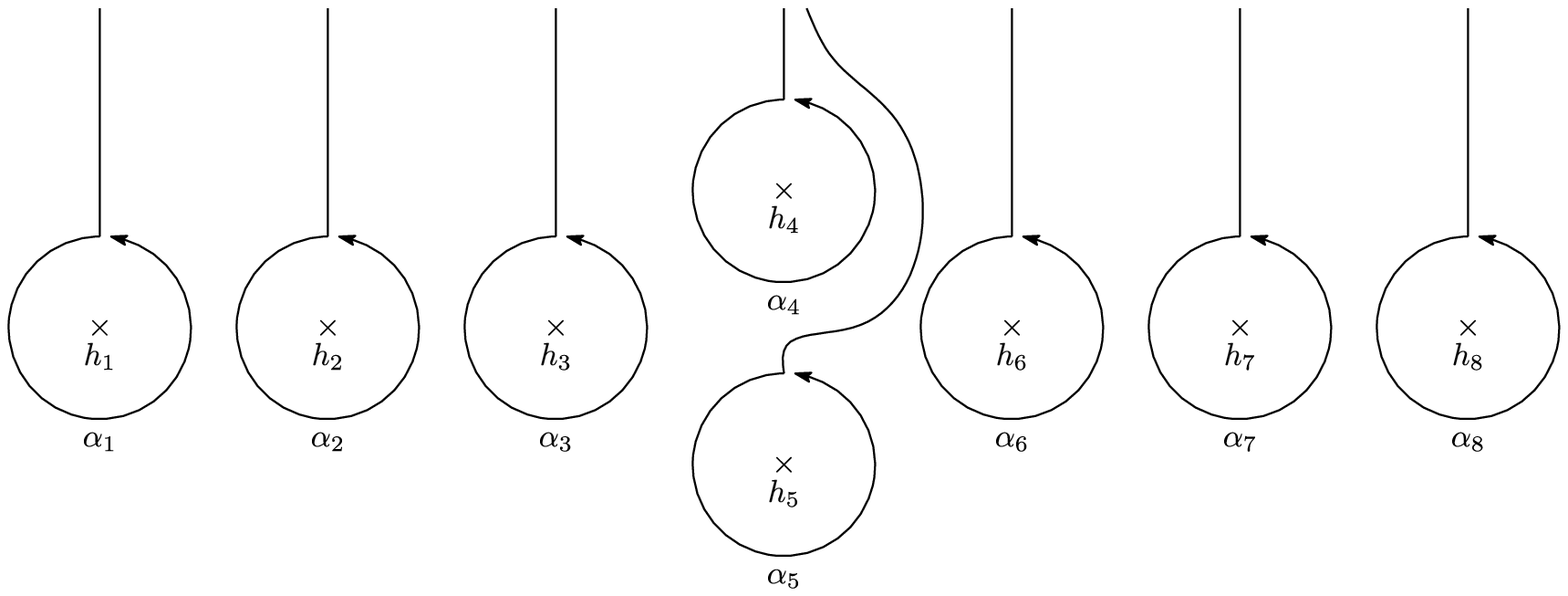} 
    }
    \caption{Loops in $\CL_{a_0}$}
    \label{fig-a0}
  \end{figure}
\item At $\la =a_1$, $\CL_{a_1}$ is tangent to $Q$. 
  If we move $\la$ around $a_1$, then $h_4$ and $h_5$ interchange
  counterclockwisely. 
  This implies a monodromy relation
  \begin{align*}
    \al_4 =\al_5 .
  \end{align*}
  By considering the half-turn of this move, 
  we obtain a picture of $X^{(3)}\cap H \cap \CL_{\la}$ with $a_1 < \la <a_2$; 
  see Figure \ref{fig-a12}. 
  \begin{figure}[h]
    \centering{
      \includegraphics[scale=0.8]{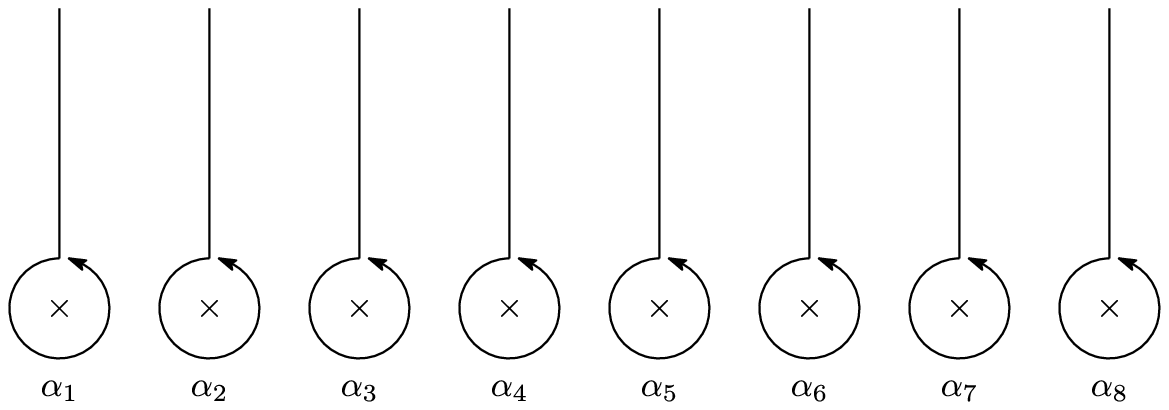} 
    }
    \caption{Loops in $\CL_{\la}$ with $a_1 < \la <a_2$}
    \label{fig-a12}
  \end{figure}
\item At $\la =a_2$, $\CL_{a_2}$ passes through the node $(\frac{3}{2},1)\in Q$. 
  When the line $\CL_{\la}$ approaches $\CL_{a_2}$, 
  the points $h_5$ and $h_6$ merge together 
  and we get a monodromy relation
  \begin{align*}
    [\al_5, \al_6 ]=1 ,\ \textrm{that is, } [\al_4, \al_6 ]=1.
  \end{align*}
  By considering the half-turn of this move, 
  we obtain a picture of $X^{(3)}\cap H \cap \CL_{\la}$ with $a_2 < \la <a_3$; 
  see Figure \ref{fig-a23}. 
  \begin{figure}[h]
    \centering{
      \includegraphics[scale=0.8]{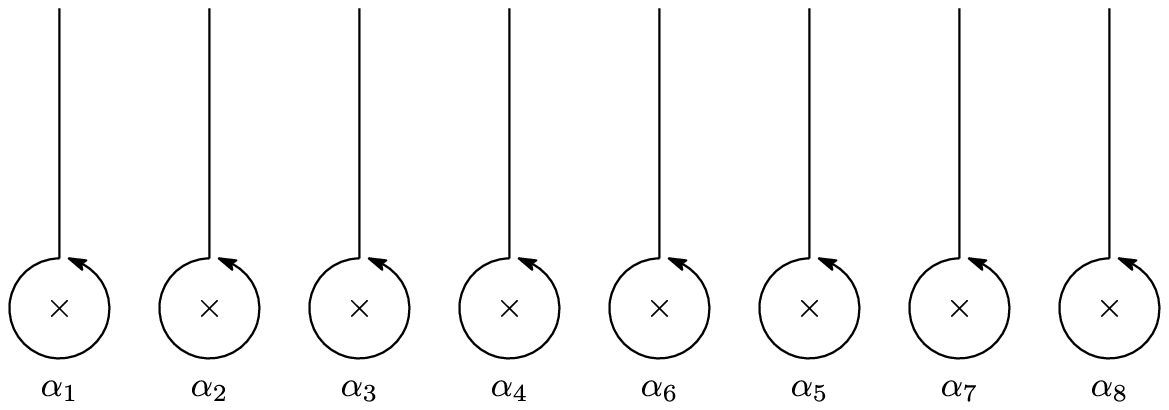} 
    }
    \caption{Loops in $\CL_{\la}$ with $a_2 < \la <a_3$}
    \label{fig-a23}
  \end{figure}
\item At $\la =a_3$, $\CL_{a_3}$ passes through the tangent point $(1, \frac{\sqrt{3}}{2})\in L_3 \cap Q$. 
  When the line $\CL_{\la}$ approaches $\CL_{a_3}$, 
  the points $h_3$ and $h_4$ merge together 
  and we get a monodromy relation
  (see also Figure \ref{fig-a3})
  \begin{align*}
    (\al_3 \al_4)^2 =(\al_4 \al_3)^2 .
  \end{align*}
  \begin{figure}[h]
    \centering{
      \includegraphics[scale=0.8]{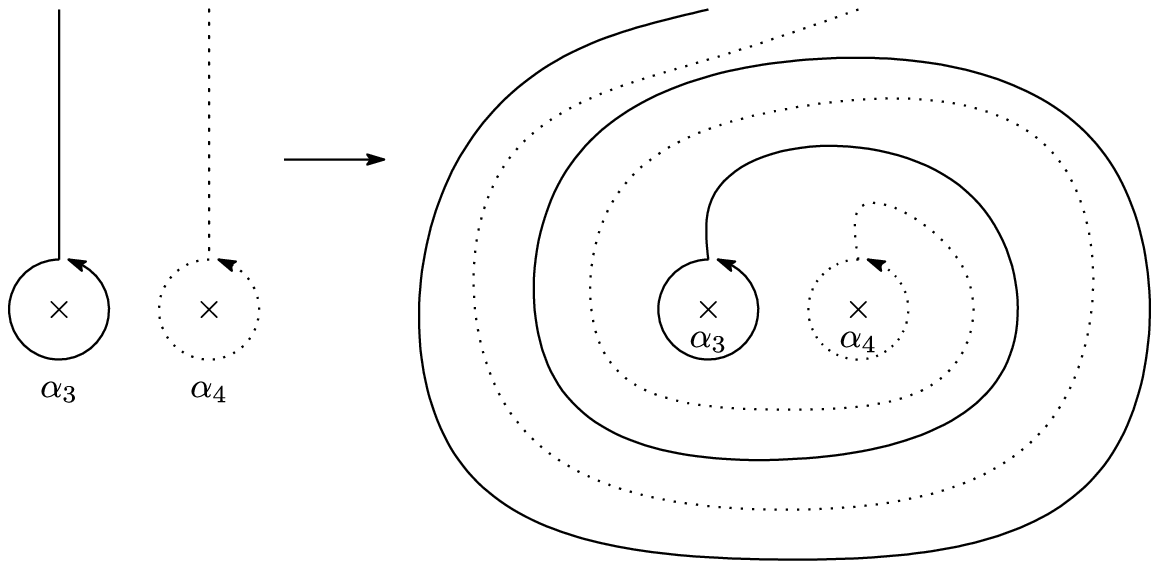} 
    }
    \caption{Loops in $\CL_{\la}$ obtained by moving $\la$ around $a_3$}
    \label{fig-a3}
  \end{figure}

  By considering the half-turn of this move, 
  we obtain a picture of $X^{(3)}\cap H \cap \CL_{\la}$ with $a_3 < \la <a_4$; 
  see Figure \ref{fig-a34}. 
  We retake loops around $h_3$ and $h_4$ by
  $\tal_3$ and $\tal_4$ in Figure \ref{fig-a34}, respectively. Note that 
  \begin{align*}
    \tal_3 &=\al_3 \al_4 \al_3 (\al_3 \al_4)^{-1}
    =\al_3 \al_4 \al_3 \al_4^{-1} \al_3^{-1}=\al_4^{-1} \al_3 \al_4 ,\\ 
    \tal_4 &=\al_3 \al_4 \al_4 (\al_3 \al_4)^{-1}=\al_3 \al_4 \al_3^{-1} .
  \end{align*}
  \begin{figure}[h]
    \centering{
      \includegraphics[scale=0.8]{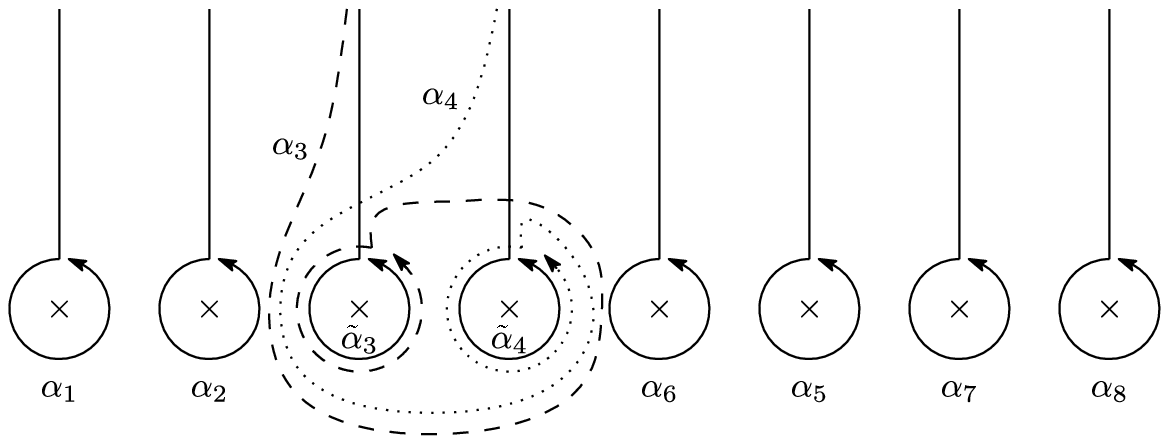} 
    }
    \caption{Loops in $\CL_{\la}$ with $a_3 < \la <a_4$}
    \label{fig-a34}
  \end{figure}
\item At $\la =a_4$, $\CL_{a_4}$ passes through the intersection point $(1, 1)= L_2 \cap L_3$.
  When the line $\CL_{\la}$ approaches $\CL_{a_4}$, 
  the points $h_2$ and $h_3$ merge together 
  and we get a monodromy relation
  \begin{align*}
    [\al_2, \tal_3 ]=1 ,\ \textrm{that is, } [\al_2, \al_4^{-1} \al_3 \al_4 ]=1.
  \end{align*}
  By considering the half-turn of this move, 
  we obtain a picture of $X^{(3)}\cap H \cap \CL_{\la}$ with $a_4 < \la <a_5$; 
  see Figure \ref{fig-a45}. 
  \begin{figure}[h]
    \centering{
      \includegraphics[scale=0.8]{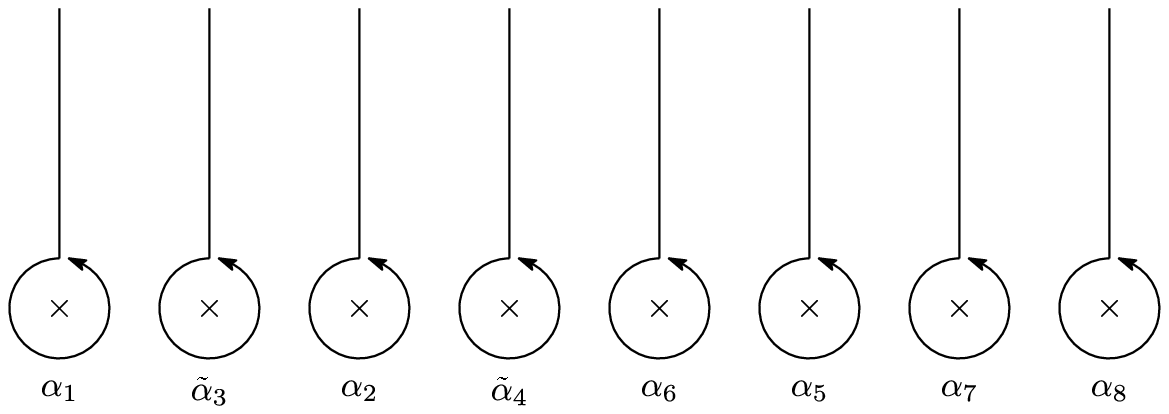} 
    }
    \caption{Loops in $\CL_{\la}$ with $a_4 < \la <a_5$}
    \label{fig-a45}
  \end{figure}
\item At $\la =a_5$, $\CL_{a_5}$ passes through the tangent point 
  $(2-\frac{\sqrt{14}}{4}, 2-\frac{\sqrt{14}}{4})\in L_2 \cap Q$. 
  When the line $\CL_{\la}$ approaches $\CL_{a_5}$, 
  the points $h_2$ and $h_4$ merge together 
  and we get a monodromy relation
  \begin{align*}
    (\al_2 \tal_4)^2 =(\tal_4 \al_2)^2 ,\ \textrm{that is, } 
    (\al_2 \al_3 \al_4 \al_3^{-1})^2 =(\al_3 \al_4 \al_3^{-1} \al_2)^2.
  \end{align*}
  By considering the half-turn of this move, 
  we obtain a picture of $X^{(3)}\cap H \cap \CL_{\la}$ with $a_5 < \la <a_6$; 
  see Figure \ref{fig-a56}. 
  We retake loops around $h_2$ and $h_4$ by
  $\tal_2$ and $\ttal_4$ in Figure \ref{fig-a56}, respectively. Note that 
  \begin{align*}
    \tal_2 &=\tal_4^{-1} \al_2 \tal_4 
    =(\al_3 \al_4 \al_3^{-1})^{-1} \al_2 (\al_3 \al_4 \al_3^{-1})
    =\al_3 \al_4^{-1} \al_3^{-1} \al_2 \al_3 \al_4 \al_3^{-1}\\
    &=\al_4^{-1} \al_3^{-1} \al_3 \al_4 \al_3 \al_4^{-1} \al_3^{-1} \al_2 \al_3 \al_4 \al_3^{-1}
    =(\al_3 \al_4)^{-1} \al_2 (\al_3 \al_4) ,\\ 
    \ttal_4 &=\al_2 \tal_4 \al_2^{-1} =(\al_2 \al_3)\al_4 (\al_2 \al_3)^{-1}.
  \end{align*}
  Here, we use the relations obtained in (3) and (4). 
  \begin{figure}[h]
    \centering{
      \includegraphics[scale=0.8]{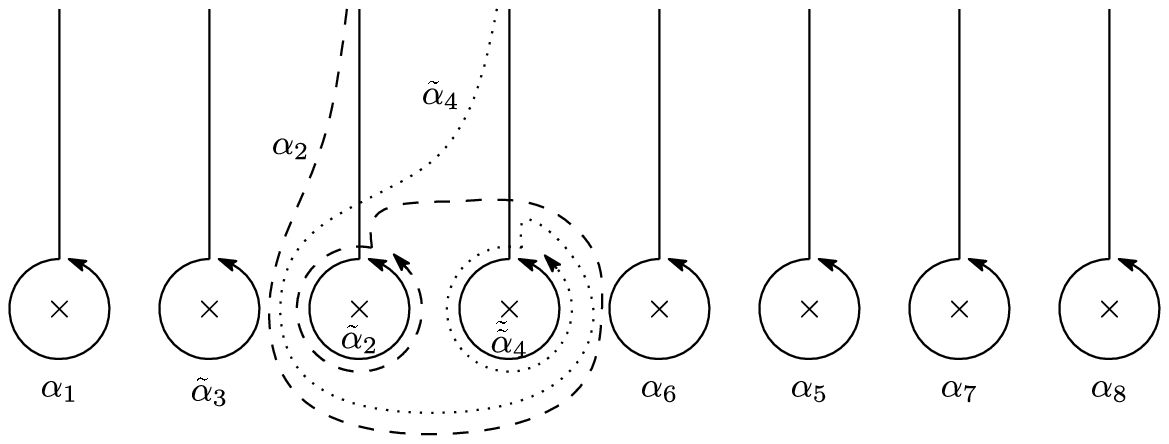} 
    }
    \caption{Loops in $\CL_{\la}$ with $a_5 < \la <a_6$}
    \label{fig-a56}
  \end{figure}
\item At $\la =a_6$, $\CL_{a_6}$ is tangent to $Q$. 
  If we move $\la$ around $a_6$, then $h_4$ and $h_6$ interchange 
  counterclockwisely. 
  This implies a monodromy relation 
  \begin{align*}
    \al_6 =\ttal_4 ,\ \textrm{that is, } 
    \al_6 =(\al_2 \al_3)\al_4 (\al_2 \al_3)^{-1}.
  \end{align*}
  By considering the half-turn of this move, 
  we obtain a picture of $X^{(3)}\cap H \cap \CL_{\la}$ with $a_6 < \la <a_7$; 
  see Figure \ref{fig-a67}. 
  Note that in $\CL_{\la}$ with $\la >a_6$, two points $h_4$ and $h_6$ are
  not in the real axis, and satisfy $\im (h_4)<0<\im (h_6)$.
  \begin{figure}[h]
    \centering{
      \includegraphics[scale=0.8]{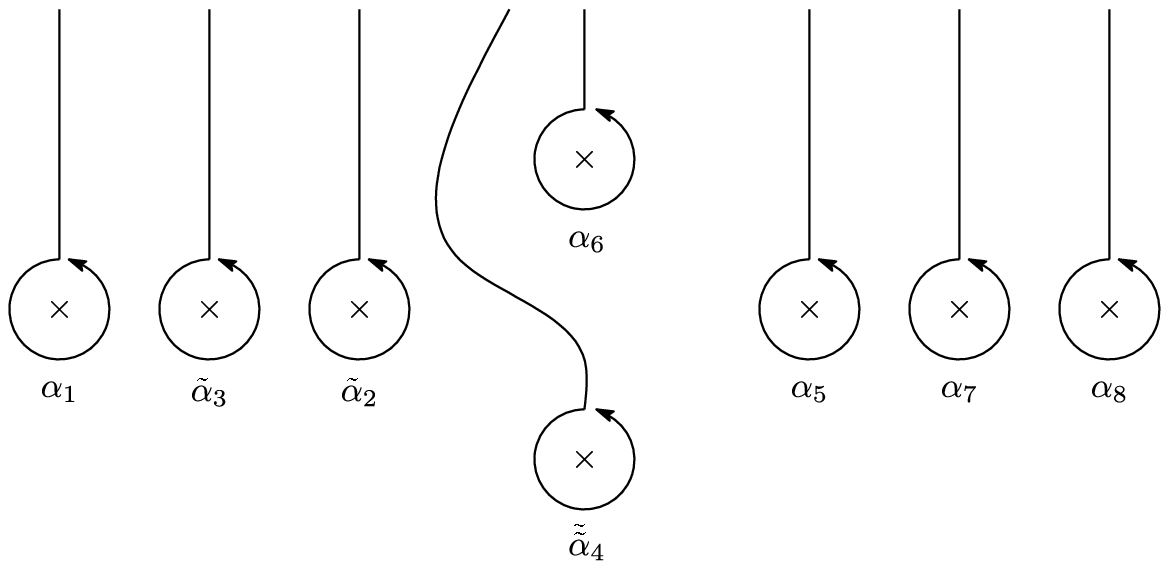} 
    }
    \caption{Loops in $\CL_{\la}$ with $a_6 < \la <a_7$}
    \label{fig-a67}
  \end{figure}
\item At $\la =a_7$, $\CL_{a_7}$ passes through the tangent point $(4,\frac{\sqrt{39}}{2})\in L_0 \cap Q$. 
  When the line $\CL_{\la}$ approaches $\CL_{a_7}$, 
  the points $h_7$ and $h_8$ merge together 
  and we get a monodromy relation
  \begin{align*}
    (\al_7 \al_8)^2 =(\al_7 \al_8)^2.
  \end{align*}
  By considering the half-turn of this move, 
  we obtain a picture of $X^{(3)}\cap H \cap \CL_{\la}$ with $a_7 < \la <a_8$; 
  see Figure \ref{fig-a78}. 
  We retake loops around $h_7$ and $h_8$ by
  $\tal_7$ and $\tal_8$ in Figure \ref{fig-a78}, respectively. Note that 
  \begin{align*}
    \tal_7 =\al_8^{-1} \al_7 \al_8 ,\quad 
    \tal_8 =\al_7 \al_8 \al_7^{-1}.
  \end{align*}
  \begin{figure}[h]
    \centering{
      \includegraphics[scale=0.8]{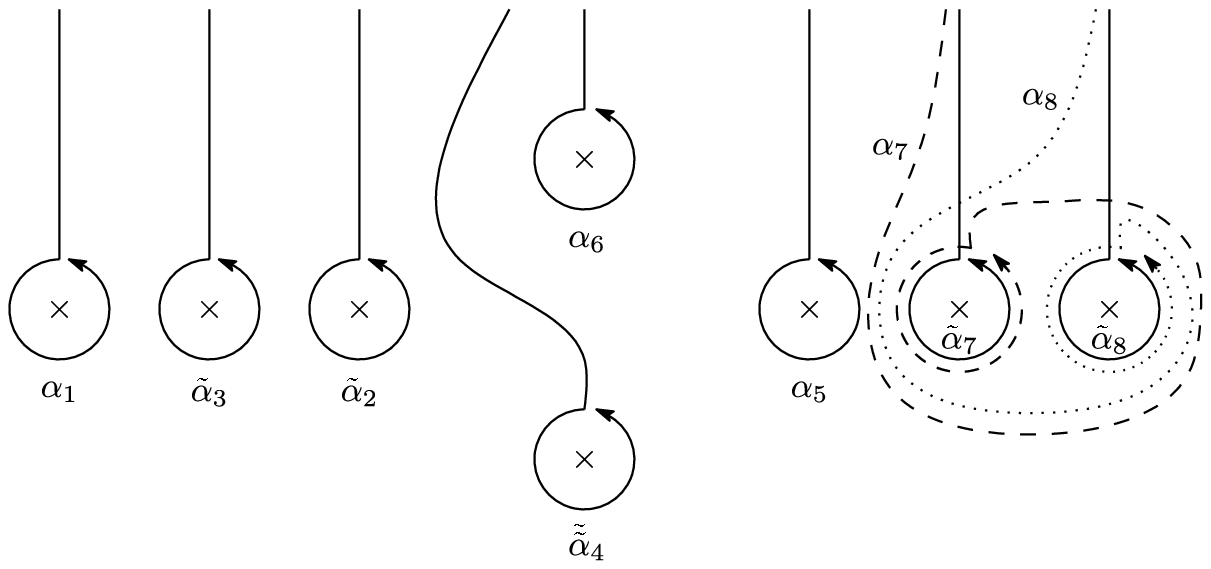} 
    }
    \caption{Loops in $\CL_{\la}$ with $a_7 < \la <a_8$}
    \label{fig-a78}
  \end{figure}
\item At $\la =a_8$, $\CL_{a_8}$ passes through the tangent point 
  $(2+\frac{\sqrt{14}}{4}, 2+\frac{\sqrt{14}}{4})\in L_2 \cap Q$. 
  When the line $\CL_{\la}$ approaches $\CL_{a_8}$, 
  the points $h_2$ and $h_5$ merge together. 
  To write down a monodromy relation, we retake loops around $h_2$ and $h_4$ by
  $\al_6^{-1} \tal_2 \al_6$ and $\tal_2 \ttal_4 \tal_2^{-1}$, respectively 
  (see Figure \ref{fig-a8}). 
  \begin{figure}[h]
    \centering{
      \includegraphics[scale=0.8]{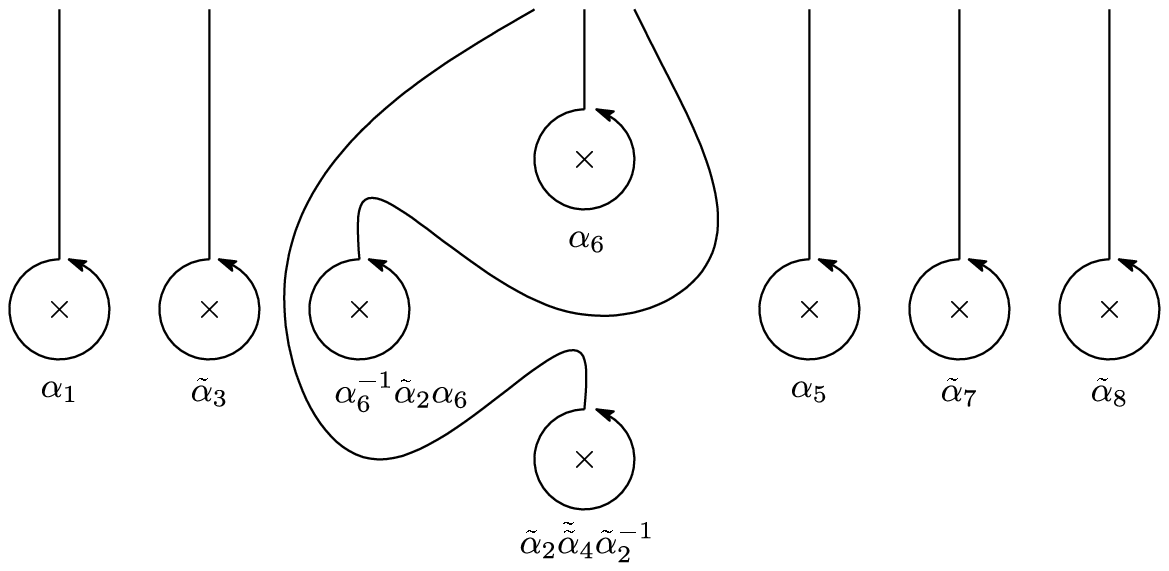} 
    }
    \caption{Retaking loops at $\CL_{\la}$ with $a_7 < \la <a_8$.}
    \label{fig-a8}
  \end{figure}

  By using these generators, we obtain a monodromy relation 
  \begin{align*}
    (\al_6^{-1} \tal_2 \al_6 \al_5)^2 =(\al_5 \al_6^{-1} \tal_2 \al_6)^2.
  \end{align*}
  Because of $[\al_5 ,\al_6]=1$, we can reduce this relation to
  \begin{align*}
    (\tal_2 \al_5)^2 =(\al_5 \tal_2 )^2 \textrm{, or equivalently, } \ 
    (\tal_2 \al_4)^2 =(\al_4 \tal_2 )^2 .
  \end{align*}
  By considering the half-turn of this move, 
  we obtain a picture of $X^{(3)}\cap H \cap \CL_{\la}$ with $a_8 < \la <a_9$; 
  see Figure \ref{fig-a89}. 
  We retake loops around $h_2$ and $h_5$ by
  $\ttal_2$ and $\tal_5$ in Figure \ref{fig-a89}, respectively. Note that 
  \begin{align*}
    \ttal_2 &=\al_5^{-1}( \al_6^{-1} \tal_2 \al_6) \al_5 
    =\al_5^{-1} \al_6^{-1} (\al_3 \al_4)^{-1} \al_2 (\al_3 \al_4) \al_6 \al_5 \\
    &=(\al_3 \al_4 \al_5 \al_6)^{-1} \al_2 (\al_3 \al_4 \al_5 \al_6)  ,\\
    \tal_5 &=\al_6^{-1} \tal_2 \al_6 \al_5 (\al_6^{-1} \tal_2 \al_6)^{-1}
    =\al_6^{-1} \tal_2 \al_5  \tal_2^{-1} \al_6 \\
    &=\al_6^{-1} (\al_3 \al_4)^{-1} \al_2 (\al_3 \al_4) \al_4 
    (\al_3 \al_4)^{-1} \al_2^{-1} (\al_3 \al_4) \al_6 \\
    &=\al_6^{-1} \al_4^{-1} \al_3^{-1} \al_6 \al_3 \al_4 \al_6 . \\
    &=\al_6^{-1} \al_4^{-1} \al_3^{-1} \al_4 \al_6 
    \al_4^{-1} \al_3 \al_4 \al_6  \\
    &=\al_6^{-1}  \al_2 \al_4^{-1} \al_3^{-1} \al_4 \al_2^{-1} \al_6 \al_2
    \al_4^{-1} \al_3 \al_4 \al_2^{-1} \al_6 \\
    &=\al_2 \al_3 \al_4^{-1} \al_3^{-1} \cdot \al_4^{-1} \al_3^{-1}  \al_4 
    \cdot \al_3 \al_4 \al_3^{-1} \cdot 
    \al_4^{-1} \al_3 \al_4 \cdot \al_3 \al_4 \al_3^{-1} \al_2^{-1}  \\
    &=\al_2 \al_4 \al_2^{-1}.
  \end{align*}
  Here, we use $[\al_5 ,\al_6]=1$, $\al_5=\al_4$, 
  $(\al_3 \al_4)^2 =(\al_4 \al_3)^2$, $[\al_2, \al_4^{-1} \al_3 \al_4 ]=1$, 
  $[\al_4 ,\al_6]=1$ and $\al_2^{-1} \al_6 \al_2 =\al_3 \al_4 \al_3^{-1}$.
  \begin{figure}[h]
    \centering{
      \includegraphics[scale=0.8]{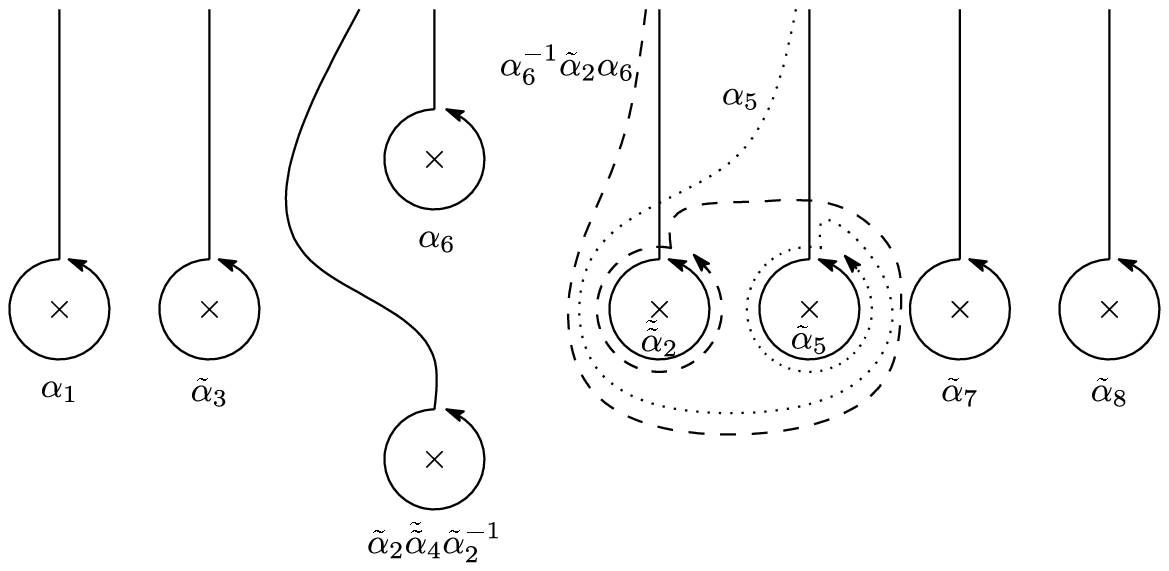} 
    }
    \caption{Loops in $\CL_{\la}$ with $a_8 < \la <a_9$}
    \label{fig-a89}
  \end{figure}
\item At $\la =a_9$, $\CL_{a_9}$ is tangent to $Q$. 
  If we move $\la$ around $a_9$, then $h_5$ and $h_7$ interchange
  counterclockwisely. 
  This implies a monodromy relation 
  \begin{align*}
    \tal_5 =\tal_7 ,\ \textrm{that is, } 
    \al_2 \al_4 \al_2^{-1}=\al_8^{-1} \al_7 \al_8.
  \end{align*}
  By considering the half-turn of this move, 
  we obtain a picture of $X^{(3)}\cap H \cap \CL_{\la}$ with $a_9 < \la <a_{10}$; 
  see Figure \ref{fig-a9A}. 
  Note that in $\CL_{\la}$ with $\la >a_9$, two points $h_5$ and $h_7$ are
  not in the real axis, and satisfy $\im (h_5)<0<\im (h_7)$.
  \begin{figure}[h]
    \centering{
      \includegraphics[scale=0.8]{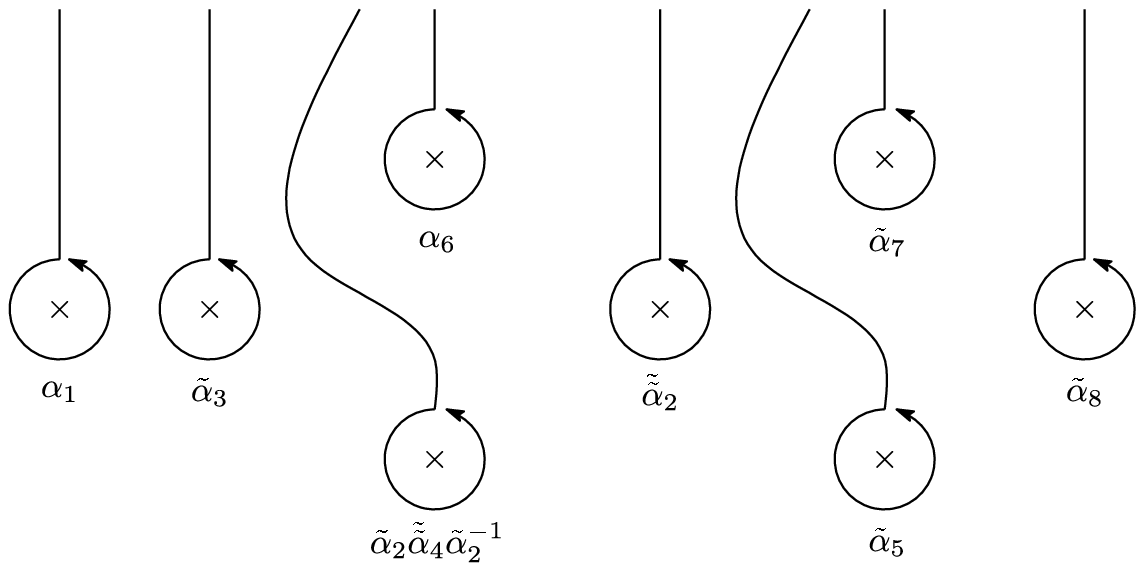} 
    }
    \caption{Loops in $\CL_{\la}$ with $a_9 < \la <a_{10}$}
    \label{fig-a9A}
  \end{figure}
\item At $\la =a_{10}$, $\CL_{a_{10}}$ passes through the intersection point $(4, 4)= L_0 \cap L_2$. 
  When the line $\CL_{\la}$ approaches $\CL_{a_{10}}$, 
  the points $h_2$ and $h_8$ merge together. 
  To write down a monodromy relation, we retake loops around $h_2$ and $h_5$ by
  $\tal_7^{-1} \ttal_2 \tal_7$ and $\ttal_2 \tal_5 \ttal_2^{-1}$, respectively 
  (see Figure \ref{fig-aA}). 
  \begin{figure}[h]
    \centering{
      \includegraphics[scale=0.8]{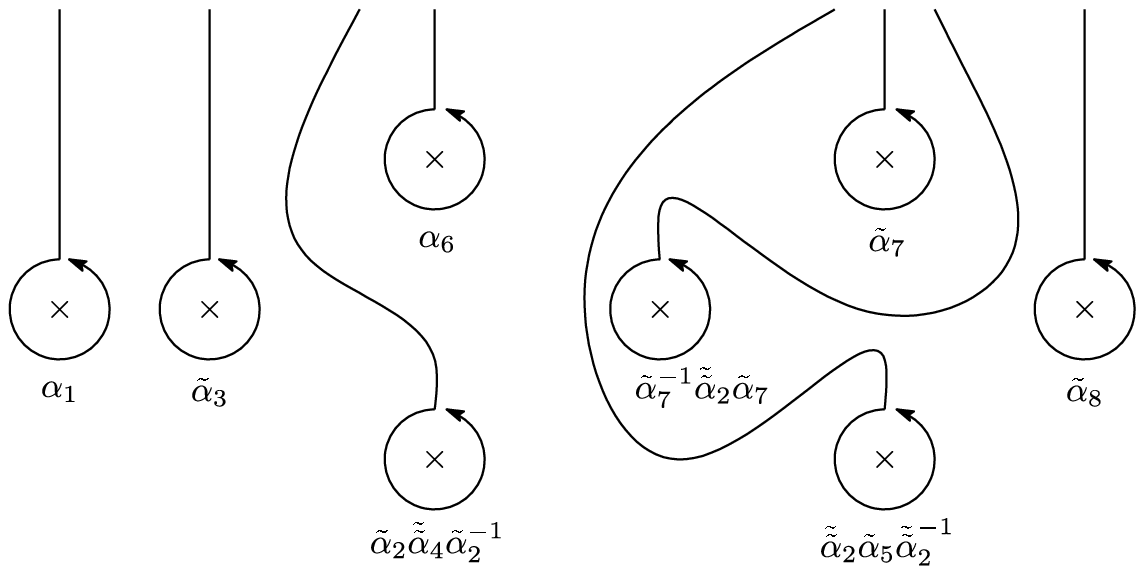} 
    }
    \caption{Retaking loops at $\CL_{\la}$ with $a_9 < \la <a_{10}$.}
    \label{fig-aA}
  \end{figure}

  By using these generators, we obtain a monodromy relation 
  \begin{align*}
    [\tal_7^{-1} \ttal_2 \tal_7, \tal_8 ]=1.
  \end{align*}
  By considering the half-turn of this move, 
  we obtain a picture of $X^{(3)}\cap H \cap \CL_{\la}$ with $a_{10} < \la <a_{11}$; 
  see Figure \ref{fig-aAB}. 
  \begin{figure}[h]
    \centering{
      \includegraphics[scale=0.8]{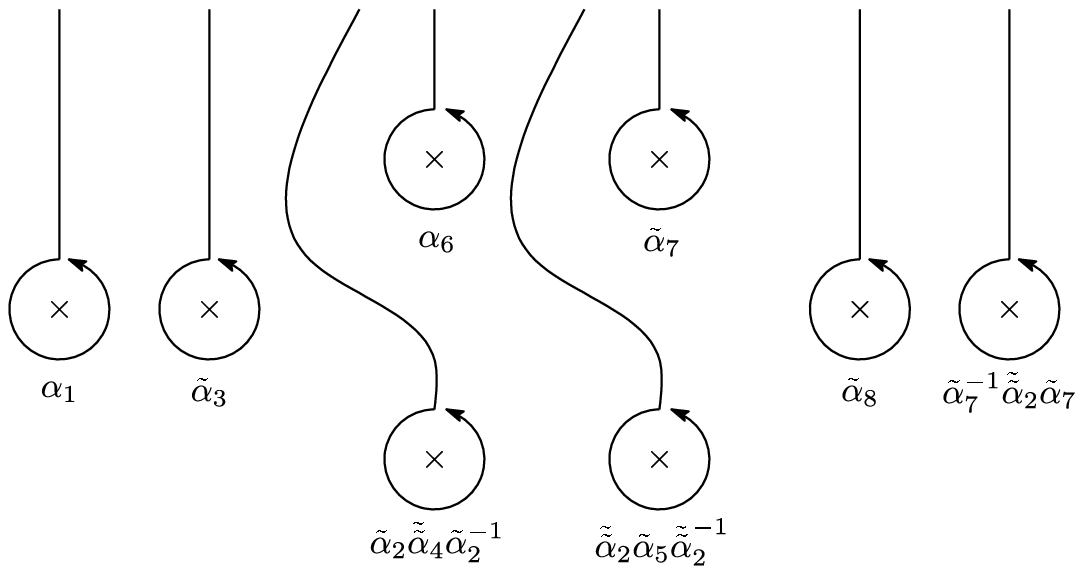} 
    }
    \caption{Loops in $\CL_{\la}$ with $a_{10} < \la <a_{11}$}
    \label{fig-aAB}
  \end{figure}
\item At $\la =a_{11}$, $\CL_{a_{11}}$ passes through the intersection point $L_0 \cap L_3$. 
  When the line $\CL_{\la}$ approaches $\CL_{a_{11}}$, 
  the points $h_3$ and $h_8$ merge together. 
  To write down a monodromy relation, 
  we redraw a picture of $\CL_{a_{11}} \simeq \P^1$ so that 
  $h_8$ is leftmost, and 
  we retake a loop around $h_8$ by
  $(\tal_7^{-1} \ttal_2 \tal_7 \al_1)^{-1} \tal_8 (\tal_7^{-1} \ttal_2 \tal_7 \al_1)$ 
  (see Figure \ref{fig-aB}). 
  \begin{figure}[h]
    \centering{
      \includegraphics[scale=0.8]{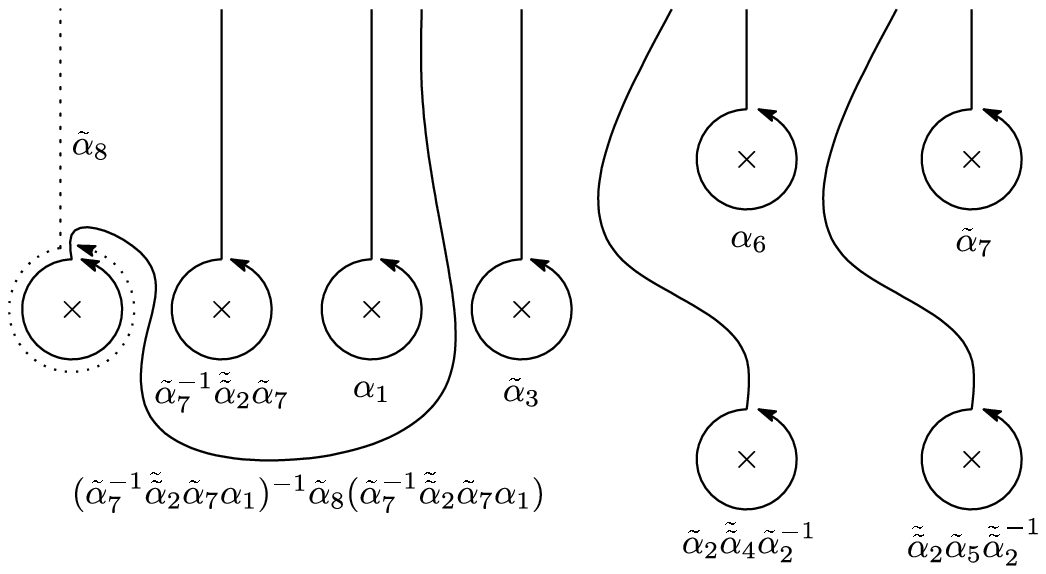} 
    }
    \caption{Retaking loops at $\CL_{\la}$ with $a_{10} < \la <a_{11}$.}
    \label{fig-aB}
  \end{figure}

  By using these generators, we obtain a monodromy relation 
  \begin{align*}
    [(\tal_7^{-1} \ttal_2 \tal_7 \al_1)^{-1} \tal_8 (\tal_7^{-1} \ttal_2 \tal_7 \al_1), 
    \tal_3 ]=1.
  \end{align*}
\end{enumerate}
Therefore, we obtain the all relations for $\la >0$. 
We list the relations obtained in $\la >0$:
\begin{enumerate}[(1)]
\setcounter{enumi}{-1}
\item\label{rel-0} $\al_1 \al_2 \dots \al_8 =1$; 
\item\label{rel-1} $\al_4 =\al_5$; 
\item\label{rel-2} $[\al_4, \al_6]=1$; 
\item\label{rel-3} $(\al_3 \al_4)^2 =(\al_4 \al_3)^2$;
\item\label{rel-4} $[\al_2, \al_4^{-1} \al_3 \al_4]=1$; 
\item\label{rel-5} $(\al_2 \al_3 \al_4 \al_3^{-1})^2 =(\al_3 \al_4 \al_3^{-1} \al_2)^2$;
\item\label{rel-6} $\al_6 =\al_2 \al_3 \al_4 (\al_2 \al_3)^{-1}$; 
\item\label{rel-7} $(\al_7 \al_8)^2 =(\al_8 \al_7)^2$;
\item\label{rel-8} $(\tal_2 \al_5)^2 =(\al_5 \tal_2)^2$; 
\item\label{rel-9} $\al_2 \al_4 \al_2^{-1}=\al_8^{-1} \al_7 \al_8$; 
\item\label{rel-10} $[\tal_7^{-1} \ttal_2 \tal_7 ,\tal_8]=1$; 
\item\label{rel-11} $[(\tal_7^{-1} \ttal_2 \tal_7 \al_1)^{-1} \tal_8 (\tal_7^{-1} \ttal_2 \tal_7 \al_1),\tal_3]=1$,  
\end{enumerate}
where
\begin{align*}
  \tal_2 &=(\al_3 \al_4)^{-1} \al_2 (\al_3 \al_4) ,\quad  
  \ttal_2 =(\al_3 \al_4 \al_5 \al_6)^{-1} \al_2 (\al_3 \al_4 \al_5 \al_6)  , \\
  \tal_3 &=\al_4^{-1} \al_3 \al_4 ,\quad  
  \tal_7 =\al_8^{-1} \al_7 \al_8 ,\quad  
  \tal_8 =\al_7 \al_8 \al_7^{-1}.
\end{align*}
Note that the relation (\ref{rel-11}) obtained as the monodromy around $a_{11}=\infty$ is not needed
(see also Remark \ref{rem:a11}).

Next, we move $\la$ from a positive number to a negative one around $\la=0$. 
Then we have two interchanges $\al_1 \leftrightarrow \al_2$ and 
$\al_6 \leftrightarrow \al_7$, and obtain the monodromy relations (\ref{rel-0'}') below. 
We obtain the monodromy relations around $-a_1$, $\ldots$, $-a_{10}$ as follows
(recall that (\ref{rel-1}) $\al_4 =\al_5$): 
\begin{enumerate}[(1')]
\setcounter{enumi}{-1}
\item\label{rel-0'} $[\al_1, \al_2]=1$, $[\al_6, \al_7]=1$; 
\item same as (1); 
\item\label{rel-2'} $[\al_4, \al_7]=1$; 
\item same as (3);
\item\label{rel-4'} $[\al_1, \al_4^{-1} \al_3 \al_4]=1$; 
\item\label{rel-5'} $(\al_1 \al_3 \al_4 \al_3^{-1})^2 =(\al_3 \al_4 \al_3^{-1} \al_1)^2$;
\item\label{rel-6'} $\al_7 =\al_1 \al_3 \al_4 (\al_1 \al_3)^{-1}$; 
\item\label{rel-7'} $(\al_6 \al_8)^2 =(\al_8 \al_6)^2$;
\item\label{rel-8'} $(\tal'_1 \al_5)^2 =(\al_5 \tal'_1)^2$;
\item\label{rel-9'} $\al_1 \al_4 \al_1^{-1}=\al_8^{-1} \al_6 \al_8$; 
\item\label{rel-10'} $[\talp_6{}^{-1} \ttalp_1 \talp_6,\talp_8]=1$; 
\item\label{rel-11'} $[(\talp_6{}^{-1} \ttalp_1 \talp_6 \al_2)^{-1} \tal_8
  (\talp_6{}^{-1} \ttalp_1 \talp_6 \al_2),\talp_3]=1$;   
\end{enumerate}
here 
\begin{align*}
  \talp_1 &=(\al_3 \al_4)^{-1} \al_1 (\al_3 \al_4) ,\quad  
  \ttalp_1 =(\al_3 \al_4 \al_5 \al_7)^{-1} \al_1 (\al_3 \al_4 \al_5 \al_7)  , \\
  \talp_3 &=\al_4^{-1} \al_3 \al_4 ,\quad  
  \talp_7 =\al_8^{-1} \al_6 \al_8 ,\quad  
  \talp_8 =\al_6 \al_8 \al_6^{-1}.
\end{align*}

By using the relations (\ref{rel-0}), (\ref{rel-1}), (\ref{rel-6}) and (\ref{rel-6'}'), 
we have 
\begin{align*}
  \pi_1(X^{(3)}\cap H)=\left\langle \al_1,\al_2,\al_3, \al_4  \left| 
      \begin{array}{l}
        \textrm{(\ref{rel-2}), (\ref{rel-3}), (\ref{rel-4}), (\ref{rel-5}),
          (\ref{rel-0'}'), (\ref{rel-2'}'), (\ref{rel-4'}'), (\ref{rel-5'}')} \\
        \textrm{(\ref{rel-7}), (\ref{rel-8}), (\ref{rel-9}), (\ref{rel-10}),
          (\ref{rel-7'}'), (\ref{rel-8'}'), (\ref{rel-9'}'), (\ref{rel-10'}')}         
      \end{array}
    \right. \right\rangle .
\end{align*}
We put 
\begin{align}
  \label{eq-beta-alpha}
  \be_1=\al_1 ,\quad 
  \be_2=\al_2 ,\quad 
  \be_3=\tal_3 =\al_4^{-1}\al_3 \al_4 ,\quad 
  \be_4=\tal_4 =\al_3 \al_4 \al_3^{-1}.
\end{align}
By the relation (\ref{rel-3}), $\al_i$'s are written as 
\begin{align}
  \label{eq-alpha-beta}
  \al_1=\be_1 ,\quad 
  \al_2=\be_2 ,\quad 
  \al_3=\be_4^{-1}\be_3 \be_4 ,\quad 
  \al_4=\be_3 \be_4 \be_3^{-1}.
\end{align}
Thus, $\be_1$, $\be_2$, $\be_3$ and $\be_4$ form a generator of $\pi_1(X^{(3)}\cap H)$:  
\begin{align*}
  \pi_1(X^{(3)}\cap H)=\left\langle \be_1,\be_2,\be_3, \be_4  \left| 
      \begin{array}{l}
        \textrm{(\ref{rel-2}), (\ref{rel-3}), (\ref{rel-4}), (\ref{rel-5}),
          (\ref{rel-0'}'), (\ref{rel-2'}'), (\ref{rel-4'}'), (\ref{rel-5'}')} \\
        \textrm{(\ref{rel-7}), (\ref{rel-8}), (\ref{rel-9}), (\ref{rel-10}),
          (\ref{rel-7'}'), (\ref{rel-8'}'), (\ref{rel-9'}'), (\ref{rel-10'}')}         
      \end{array}
    \right. \right\rangle .
\end{align*}

\begin{Lem}
  \label{lem-ABC}
  The relations 
  (\ref{rel-2}), (\ref{rel-3}), (\ref{rel-4}), (\ref{rel-5}), 
  (\ref{rel-0'}'), (\ref{rel-2'}'), (\ref{rel-4'}'), (\ref{rel-5'}')
  are equivalent to 
  \begin{enumerate}[{\rm (A)}]
  \item $[\be_i,\be_j]=1 \quad (1\leq i<j \leq 3)$; 
  \item $[\be_i \be_4 \be_i^{-1}, \be_j \be_4 \be_j^{-1}]=1 \quad (1\leq i<j \leq 3)$;
  \item $(\be_4\be_k)^2=(\be_k\be_4)^2 \quad (1\leq k \leq 3)$. 
  \end{enumerate}
\end{Lem}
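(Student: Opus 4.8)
The plan is to carry out the change of generators (\ref{eq-beta-alpha}) and (\ref{eq-alpha-beta}) at the level of relations: I will rewrite each of the nine relations contained in (\ref{rel-2}), (\ref{rel-3}), (\ref{rel-4}), (\ref{rel-5}), (\ref{rel-0'}'), (\ref{rel-2'}'), (\ref{rel-4'}'), (\ref{rel-5'}') in terms of $\be_1,\be_2,\be_3,\be_4$ and check that the resulting list is exactly (A), (B), (C). The rewriting uses the definitions $\be_1=\al_1$, $\be_2=\al_2$, $\be_3=\al_4^{-1}\al_3\al_4$, $\be_4=\al_3\al_4\al_3^{-1}$ (so that $\al_3\al_4\al_3^{-1}=\be_4$ holds identically), the formulas (\ref{eq-alpha-beta}) expressing the $\al_i$ through the $\be_i$ (legitimate by (\ref{rel-3})), and the identities $\al_6=\al_2\al_3\al_4\al_3^{-1}\al_2^{-1}=\be_2\be_4\be_2^{-1}$ and $\al_7=\al_1\al_3\al_4\al_3^{-1}\al_1^{-1}=\be_1\be_4\be_1^{-1}$ that come from (\ref{rel-6}) and (\ref{rel-6'}').

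Most of the translation is then purely formal. Since $\al_4^{-1}\al_3\al_4=\be_3$, relation (\ref{rel-4}) reads $[\be_2,\be_3]=1$ and (\ref{rel-4'}') reads $[\be_1,\be_3]=1$, while the first half of (\ref{rel-0'}') reads $[\be_1,\be_2]=1$; these three are exactly (A). Using in addition $\al_4=\be_3\be_4\be_3^{-1}$ and the expressions above for $\al_6$ and $\al_7$, relation (\ref{rel-2}) becomes $[\be_3\be_4\be_3^{-1},\be_2\be_4\be_2^{-1}]=1$, (\ref{rel-2'}') becomes $[\be_3\be_4\be_3^{-1},\be_1\be_4\be_1^{-1}]=1$, and the second half of (\ref{rel-0'}') becomes $[\be_1\be_4\be_1^{-1},\be_2\be_4\be_2^{-1}]=1$; these three are exactly (B). Since $\al_3\al_4\al_3^{-1}=\be_4$, relation (\ref{rel-5}) becomes $(\be_2\be_4)^2=(\be_4\be_2)^2$ and (\ref{rel-5'}') becomes $(\be_1\be_4)^2=(\be_4\be_1)^2$, which are the cases $k=2$ and $k=1$ of (C). Each of these passages is reversible, so the relations are equivalent to their images.

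The one step that is not a bare substitution, and the place where I expect to have to work, is matching (\ref{rel-3}) with the case $k=3$ of (C), i.e.\ proving $(\al_3\al_4)^2=(\al_4\al_3)^2 \iff (\be_3\be_4)^2=(\be_4\be_3)^2$. Here I would argue directly: assuming (\ref{rel-3}), one has $\be_3\be_4=\al_4^{-1}(\al_3\al_4)^2\al_3^{-1}=\al_4^{-1}(\al_4\al_3)^2\al_3^{-1}=\al_3\al_4$, whereas $\be_4\be_3=(\al_3\al_4)(\al_4\al_3)^{-1}(\al_3\al_4)$ holds identically; squaring the last expression and using that $\al_4\al_3$ commutes with $(\al_4\al_3)^2=(\al_3\al_4)^2$ (a trivial consequence of (\ref{rel-3})) collapses it to $(\be_4\be_3)^2=(\al_3\al_4)^2=(\be_3\be_4)^2$. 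The converse is the identical computation with the two generating sets interchanged, now invoking (\ref{eq-alpha-beta}). Putting the three blocks together, the nine relations on the left are equivalent to the nine relations (A), (B), (C). The only thing I will need to be careful about is the bookkeeping --- accounting for each relation exactly once (recalling that (\ref{rel-0'}') is really two relations) and checking that the use of (\ref{rel-3}) to eliminate $\al_3,\al_4$ in favour of the $\be_i$ is legitimate, which it is precisely because (\ref{rel-3}) itself is among the relations being translated.
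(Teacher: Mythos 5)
Your proposal is correct and takes essentially the same route as the paper: the paper's own proof merely records the identities $\al_6=\be_2\be_4\be_2^{-1}$ and $\al_7=\be_1\be_4\be_1^{-1}$ and dismisses the rest as straightforward calculation, which is exactly the relation-by-relation translation you carry out (with the same matching of (4), (4'), (0') to (A), of (2), (2'), (0') to (B), and of (5), (5'), (3) to (C)). Your explicit two-way verification of $(\al_3\al_4)^2=(\al_4\al_3)^2\Leftrightarrow(\be_3\be_4)^2=(\be_4\be_3)^2$, using $\be_3\be_4=\al_3\al_4$ and $\be_4\be_3=(\al_3\al_4)(\al_4\al_3)^{-1}(\al_3\al_4)$, is correct and fills in the one step the paper leaves implicit.
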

\begin{proof}
  Note that 
  \begin{align*}
    \be_2 \be_4 \be_2^{-1}
    =\al_2 \al_3 \al_4 \al_3^{-1} \al_2^{-1}
    =\al_6 ,\quad 
    \be_1 \be_4 \be_1^{-1}
    =\al_1 \al_3 \al_4 \al_3^{-1} \al_1^{-1}
    =\al_7 .
  \end{align*}
  The lemma is proved by straightforward calculations. 
\end{proof}
By this lemma we obtain 
\begin{align*}
  \pi_1(X^{(3)}\cap H)=\left\langle \be_1,\be_2,\be_3, \be_4  \left| 
      \begin{array}{l}
        \textrm{(A), (B), (C)} \\
        \textrm{(\ref{rel-7}), (\ref{rel-8}), (\ref{rel-9}), (\ref{rel-10}),
          (\ref{rel-7'}'), (\ref{rel-8'}'), (\ref{rel-9'}'), (\ref{rel-10'}')}         
      \end{array}
    \right. \right\rangle .
\end{align*}
Note that by $(\be_3 \be_4)^2 =(\be_4 \be_3)^2$, we have 
$$
\al_3 \al_4 = (\be_4^{-1}\be_3 \be_4) (\be_3 \be_4 \be_3^{-1}) =\be_3 \be_4 . 
$$
Recall that
\begin{align*}
  &\al_1=\be_1 ,\quad 
  \al_2=\be_2 ,\quad 
  \al_3=\be_4^{-1}\be_3 \be_4 ,\quad 
  \al_4=\al_5=\be_3 \be_4 \be_3^{-1}, \\
  &\al_6 =\be_2 \be_4 \be_2^{-1} ,\quad 
  \al_7 =\be_1 \be_4 \be_1^{-1}, 
\end{align*}
and 
\begin{align*}
  \al_8^{-1} &=\al_1 \al_2 \al_3 \al_4 \al_5 \al_6 \al_7 
  =\be_1 \be_2 \cdot \be_3 \be_4 \cdot \be_3 \be_4 \be_3^{-1} 
  \cdot \be_2 \be_4 \be_2^{-1} \cdot \be_1 \be_4 \be_1^{-1} \\
  &=\be_1 \be_2 \be_4 \be_3 \be_4 \be_2 \be_4 \be_2^{-1} \be_1 \be_4 \be_1^{-1} .
\end{align*}

\begin{Lem}
  The relations (\ref{rel-7})--(\ref{rel-10}) 
  and (\ref{rel-7'}')--(\ref{rel-10'}') follow from (A)--(C). 
\end{Lem}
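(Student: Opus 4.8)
The plan is to substitute, in each of (\ref{rel-7})--(\ref{rel-10}) and (\ref{rel-7'}')--(\ref{rel-10'}'), the expressions for $\al_1,\dots,\al_8$ in terms of $\be_1,\dots,\be_4$ recorded just above, namely $\al_1=\be_1$, $\al_2=\be_2$, $\al_3=\be_4^{-1}\be_3\be_4$, $\al_4=\al_5=\be_3\be_4\be_3^{-1}$, $\al_6=\be_2\be_4\be_2^{-1}$, $\al_7=\be_1\be_4\be_1^{-1}$ and $\al_8^{-1}=\be_1\be_2\be_4\be_3\be_4\be_2\be_4\be_2^{-1}\be_1\be_4\be_1^{-1}$, and then to verify each resulting word identity inside the group $G$ with generators $\be_1,\be_2,\be_3,\be_4$ and relations (A), (B), (C). It is convenient first to record a few simplifications coming from (C): $\al_3\al_4=\be_3\be_4$, $\al_3\al_4\al_5=\be_4\be_3\be_4$, and, using also (A), $\tal_2=(\al_3\al_4)^{-1}\al_2(\al_3\al_4)=\be_4^{-1}\be_2\be_4$. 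The only tools needed are (A) (commuting $\be_1,\be_2,\be_3$ past one another), (B) (which, after the substitution, says precisely that the three conjugates $\al_4=\al_5$, $\al_6$, $\al_7$ of $\be_4$ commute pairwise), and the identity $\be_k^{-1}\be_4\be_k\be_4=\be_4\be_k\be_4\be_k^{-1}$, the $\be$-version of (\ref{eq-rel2-2}), which is equivalent to (C).

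The amount of computation is halved by a symmetry. The assignment $\be_1\mapsto\be_2$, $\be_2\mapsto\be_1$, $\be_3\mapsto\be_3$, $\be_4\mapsto\be_4$ sends the relators of (A), of (B) and of (C) to relators of the same three types, so it induces an involutive automorphism $\phi$ of $G$. From the substitution one checks $\phi(\al_1)=\al_2$, $\phi(\al_6)=\al_7$, $\phi(\al_3)=\al_3$, $\phi(\al_4)=\al_4=\al_5$, and $\phi(\al_8)=\al_8$ in $G$: indeed $\al_8^{-1}=\be_1\be_2\be_4\be_3\be_4\cdot\al_6\cdot\al_7$, so $\phi(\al_8^{-1})=\be_2\be_1\be_4\be_3\be_4\cdot\al_7\cdot\al_6$, which equals $\al_8^{-1}$ by (A) and by $[\al_6,\al_7]=1$ (part of (B)). Hence $\phi$ carries each of (\ref{rel-7'}')--(\ref{rel-10'}') to the corresponding relation among (\ref{rel-7})--(\ref{rel-10}), and it suffices to deduce (\ref{rel-7}), (\ref{rel-8}), (\ref{rel-9}), (\ref{rel-10}) from (A)--(C).

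I would establish (\ref{rel-9}) first, since it is short and it simplifies the rest. In the word $\al_8^{-1}\al_7\al_8$ the trailing factor $\al_7=\be_1\be_4\be_1^{-1}$ of $\al_8^{-1}$ and the leading factor $\al_7^{-1}=\be_1\be_4^{-1}\be_1^{-1}$ of $\al_8$ absorb the central $\al_7$, leaving $\al_8^{-1}\al_7\al_8=\be_1\be_2\,(\be_4\be_3\be_4)\,(\al_6\al_7\al_6^{-1})\,(\be_4^{-1}\be_3^{-1}\be_4^{-1})\,\be_2^{-1}\be_1^{-1}$. By (B), $\al_6\al_7\al_6^{-1}=\al_7$; then the $\be$-version of (\ref{eq-rel2-2}) for $k=1$ gives $\be_4\al_7\be_4^{-1}=\be_1^{-1}\be_4\be_1$, so that $(\be_4\be_3\be_4)\al_7(\be_4^{-1}\be_3^{-1}\be_4^{-1})=\be_4\be_1^{-1}\al_4\be_1\be_4^{-1}$ after commuting with (A); moving the $\be_1$'s past the $\be_2$'s by (A), the whole word becomes $\be_2\,(\al_7\al_4\al_7^{-1})\,\be_2^{-1}$. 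A last application of $[\al_7,\al_4]=1$ (again (B)) yields $\be_2\al_4\be_2^{-1}=\al_2\al_4\al_2^{-1}$, which is (\ref{rel-9}). In particular $\tal_7=\al_8^{-1}\al_7\al_8=\al_2\al_4\al_2^{-1}$, so that $\al_7=\al_8\,(\al_2\al_4\al_2^{-1})\,\al_8^{-1}$.

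It remains to treat (\ref{rel-7}), (\ref{rel-8}), (\ref{rel-10}). For (\ref{rel-8}): with $\tal_2=\be_4^{-1}\be_2\be_4$ and $\al_5=\be_3\be_4\be_3^{-1}$, conjugating the relation by $\be_4$ and using the $\be$-version of (\ref{eq-rel2-2}) for $k=3$ (in the form $\be_4\be_3\be_4\be_3^{-1}\be_4^{-1}=\be_3^{-1}\be_4\be_3$) turns it into $(\be_2\,\be_3^{-1}\be_4\be_3)^2=(\be_3^{-1}\be_4\be_3\,\be_2)^2$, which after commuting $\be_3$ past $\be_2$ by (A) is exactly $(\be_2\be_4)^2=(\be_4\be_2)^2$, that is, (C) for $k=2$; so (\ref{rel-8}) needs only (A) and (C). For (\ref{rel-7}): setting $v=\al_2\al_4\al_2^{-1}=\be_2\be_3\be_4\be_3^{-1}\be_2^{-1}$, one has $\al_7=\al_8 v\al_8^{-1}$ by (\ref{rel-9}), so $(\al_7\al_8)^2=(\al_8\al_7)^2$ is equivalent to the braid relation $(v\al_8)^2=(\al_8 v)^2$ between $v$ and $\al_8$; after inserting the reduced word $\al_8=\be_1\be_4^{-1}\be_1^{-1}\be_2\be_4^{-1}\be_2^{-1}\be_4^{-1}\be_3^{-1}\be_4^{-1}\be_2^{-1}\be_1^{-1}$ this is a bounded calculation using (A)--(C). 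The real work is (\ref{rel-10}). One first reduces its three ingredients to short words in the $\be_i$: $\tal_7=\al_2\al_4\al_2^{-1}$ by (\ref{rel-9}); $\tal_8=\al_6^{-1}\al_5^{-1}\al_4^{-1}\al_3^{-1}\al_2^{-1}\al_1^{-1}\al_7^{-1}=\be_2\be_4^{-1}\be_2^{-1}\be_4^{-1}\be_3^{-1}\be_4^{-1}\be_2^{-1}\be_4^{-1}\be_1^{-1}$, using the relation (\ref{rel-0}); and $\ttal_2=\al_6^{-1}\,(\be_4\be_3\be_4)^{-1}\be_2(\be_4\be_3\be_4)\,\al_6$. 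Then one verifies $[\tal_7^{-1}\ttal_2\tal_7,\tal_8]=1$ in $G$. I expect this last step to be the main obstacle: $\ttal_2$ is a conjugate of $\be_2$ by a fairly long word, $\tal_8$ is a long word in $\be_4,\be_3,\be_2,\be_1$, and all of (A), (B) and (C) have to be interleaved to normalize the two factors of the commutator before it collapses --- the difficulty is one of bookkeeping, not of idea. Once (\ref{rel-7})--(\ref{rel-10}) are in hand, applying $\phi$ produces (\ref{rel-7'}')--(\ref{rel-10'}'), which finishes the proof of the lemma and, with it, Theorem \ref{th-3dim}.
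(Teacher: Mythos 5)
Your overall route is the same as the paper's: express $\al_1,\ldots,\al_8$ (and $\tal_2,\ttal_2,\tal_7,\tal_8$) as words in $\be_1,\ldots,\be_4$ and check each relation inside the group presented by (A)--(C). Within that route, your treatment of (\ref{rel-8}) and (\ref{rel-9}) is correct (the conjugation-by-$\be_4$ reduction of (\ref{rel-8}) to $(\be_2\be_4)^2=(\be_4\be_2)^2$, and the absorption argument giving $\al_8^{-1}\al_7\al_8=\be_2\al_4\be_2^{-1}$ via $[\al_6,\al_7]=[\al_7,\al_4]=1$ and (\ref{eq-rel2-2}) for $k=1$, both check out), and the involution $\be_1\leftrightarrow\be_2$ is a genuinely nice addition: it does preserve the relator sets (A), (B), (C), it fixes $\al_3,\al_4,\al_5$ and $\al_8$ (modulo (A), (C) and $[\al_6,\al_7]=1$) while swapping $\al_1\leftrightarrow\al_2$, $\al_6\leftrightarrow\al_7$, so it carries (\ref{rel-7'}')--(\ref{rel-10'}') to (\ref{rel-7})--(\ref{rel-10}) and makes rigorous what the paper dismisses with ``shown in a similar way.''

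The gap is that (\ref{rel-7}) and, above all, (\ref{rel-10}) are never actually verified. For (\ref{rel-7}) you only reduce, via (\ref{rel-9}), to the braid relation $(v\al_8)^2=(\al_8 v)^2$ with $v=\al_2\al_4\al_2^{-1}$ and then assert it is ``a bounded calculation''; for (\ref{rel-10}) you rewrite $\tal_7$, $\tal_8$, $\ttal_2$ and then explicitly leave the commutator identity $[\tal_7^{-1}\ttal_2\tal_7,\tal_8]=1$ as ``bookkeeping.'' But these verifications are precisely the content of the lemma, and the one for (\ref{rel-10}) is not a mechanical collapse: in the paper it is carried out by first using (\ref{rel-7}) to get $\tal_7\tal_8\tal_7^{-1}=\al_8$, so that (\ref{rel-10}) becomes $[\ttal_2,\al_8^{-1}]=1$, then simplifying the relevant conjugating word to $\be_4\be_3\be_4\be_2\be_4\be_3\be_4\be_3^{-1}\be_1$, invoking $[\be_2,\be_3^{-1}\be_1]=1$, and finishing with a ten-step chain of (A)--(C) manipulations; for (\ref{rel-7}) the paper likewise exhibits a concrete reduction to $\be_4\be_3\be_4\be_3^{-1}=\be_3^{-1}\be_4\be_3\be_4$, an instance of (C). Since the truth of these identities in the group $\langle\be_1,\be_2,\be_3,\be_4\mid {\rm (A),(B),(C)}\rangle$ is exactly what is being claimed, declaring them routine is circular as a proof; until those two computations are written out (after which your symmetry argument legitimately disposes of the primed relations), the lemma is not established.
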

\begin{proof}
  We show the lemma only for (\ref{rel-7})--(\ref{rel-10}), 
  because the others are shown in a similar way. 
  We assume (A)--(C). 
  First, we rewrite the relations (\ref{rel-7}), (\ref{rel-8}), (\ref{rel-9}),
  by using $\be_i$'s:
  \begin{align*}
    \textrm{(\ref{rel-7})} 
    \Leftrightarrow &\al_7^{-1} \al_8^{-1}\al_7^{-1} \al_8^{-1} =\al_8^{-1} \al_7^{-1} \al_8^{-1} \al_7^{-1} \\
    \Leftrightarrow &\be_1 \be_4^{-1} \be_2 \be_4 \be_3 \be_4 \be_2 \be_4 
    \be_1 \be_4 \be_3 \be_4 \be_2 \be_4 \be_2^{-1} \be_1 \be_4 \be_1^{-1} \\
    &=\be_1 \be_2 \be_4 \be_3 \be_4 \be_2 \be_4 \be_1 \be_4 \be_3 \be_4 \be_2 \be_4 \be_2^{-1} \\
    \Leftrightarrow &\be_3 \be_4 \be_2 \be_4 
    \be_1 \be_4 \be_3  \be_1^{-1} \be_4 \be_1
    =\be_2 \be_4 \be_2^{-1} \be_3 \be_4 \be_2 \be_4 \be_1 
    \be_4 \be_3 , 
  \end{align*}
  \begin{align*}
    \textrm{(\ref{rel-8})} 
    \Leftrightarrow &((\al_3 \al_4)^{-1} \al_2 (\al_3 \al_4) \cdot \al_5)^2 
    =(\al_5 \cdot (\al_3 \al_4)^{-1} \al_2 (\al_3 \al_4))^2 \\
    \Leftrightarrow &(\be_4^{-1}\be_3^{-1} \be_2 \be_3 \be_4 \be_3 \be_4 \be_3^{-1})^2 
    =(\be_3 \be_4 \be_3^{-1} \be_4^{-1} \be_3^{-1} \be_2 \be_3 \be_4)^2 \\
    \Leftrightarrow &\be_2 \be_4 \be_2 \be_4
    =\be_4 \be_2 \be_4 \be_2 
    \quad \textrm{ (this is a relation in (C))}, 
  \end{align*}
  \begin{align*}
    \textrm{(\ref{rel-9})} 
    \Leftrightarrow & \be_2 \be_3 \be_4 \be_3^{-1} \be_2^{-1}
    =\be_1 \be_2 \be_4 \be_3 \be_4 \be_2 \be_4 \be_2^{-1} \be_1 \be_4 \be_1^{-1}
    (\be_1 \be_2 \be_4 \be_3 \be_4 \be_2 \be_4 \be_2^{-1})^{-1} \\
    \Leftrightarrow & \be_4 
    =\be_3^{-1}\be_1 \be_4 \be_3 \be_4  \be_1 \be_4 
    \be_1^{-1} \be_4^{-1} \be_3^{-1} \be_4^{-1} \be_1^{-1} \be_3 . 
  \end{align*}
  Next, we show (\ref{rel-7}) and (\ref{rel-9}), 
  since (\ref{rel-8}) is already proved. 
  Note that $[\be_i ,\be_j]=1$ and $[\be_i \be_4 \be_i^{-1}, \be_j \be_4 \be_j^{-1}]=1$ 
  imply $[\be_i^{-1} \be_4 \be_i, \be_j^{-1} \be_4 \be_j]=1$. 
  The left-hand side of (\ref{rel-7}) is 
  \begin{align*}
    &\be_3 \be_4 \be_2 \be_4 \be_1 (\be_3 \be_3^{-1}) \be_4 \be_3  \be_1^{-1} \be_4 \be_1 
    =\be_3 \be_4 \be_2 \be_4 \be_1 \be_3 \be_1^{-1} \be_4 \be_1 \be_3^{-1} \be_4 \be_3 \\
    &=\be_3 \be_4 \be_2 \be_4 \be_3 \be_4 \be_1 \be_3^{-1}\be_4 \be_3 
    =\be_3 \be_4 \be_2 \be_4 \be_3 \be_4 \be_3^{-1} \be_1 \be_4 \be_3 ,
  \end{align*}
  and the right-hand side is 
  \begin{align*}
    &\be_2 \be_4 \be_2^{-1} \be_3 \be_4 (\be_3^{-1} \be_3) \be_2 \be_4 \be_1 
    \be_4 \be_3 
    =\be_3 \be_4 \be_3^{-1} \be_2 \be_4 \be_2^{-1} \be_3 \be_2 \be_4 \be_1 
    \be_4 \be_3 \\
    &=\be_3 \be_4 \be_3^{-1} \be_2 \be_4 \be_3 \be_4 \be_1 
    \be_4 \be_3
    =\be_3 \be_4 \be_2 \be_3^{-1} \be_4 \be_3 \be_4 \be_1 
    \be_4 \be_3 .
  \end{align*}
  Thus, (\ref{rel-7}) is equivalent to 
  $\be_4 \be_3 \be_4 \be_3^{-1} = \be_3^{-1} \be_4 \be_3 \be_4$ 
  which is nothing but a relation in (C). 
  The right-hand side of (\ref{rel-9}) is 
  \begin{align*}
    &\be_1 \be_3^{-1} \be_4 \be_3 \be_4  \be_1 \be_4 
    \be_1^{-1} \be_4^{-1} \be_3^{-1} \be_4^{-1} \be_3 \be_1^{-1} \\
    &=\be_1 \be_4 
    \be_3 \be_4 \be_3^{-1} \be_1 \be_4 \be_1^{-1} 
    \be_4^{-1} \be_3^{-1} \be_4^{-1} \be_3 \be_1^{-1} \\
    &=\be_1 \be_4 \be_1 \be_4  
    \be_1^{-1} \be_3 \be_4 
    \be_3^{-1} \be_4^{-1} \be_3^{-1} \be_4^{-1} \be_3 \be_1^{-1} \\
    &=\be_4 \be_1 \be_4 \be_1 
    \be_1^{-1} \be_3 \be_4 
    \be_4^{-1} \be_3^{-1} \be_4^{-1} \be_3^{-1}  \be_3 \be_1^{-1} 
    =\be_4 ,
  \end{align*}
  and hence (\ref{rel-9}) is proved. 
  Finally, we show (\ref{rel-10}). 
  By using (\ref{rel-7}), we have
  $$
  \tal_7 \tal_8 \tal_7^{-1}
  =\al_8^{-1} \al_7 \al_8 \al_7 \al_8 \al_7^{-1}\al_8^{-1} \al_7^{-1} \al_8
  =\al_8 . 
  $$
  Thus, the (\ref{rel-10}) is rewritten by $\be_i$'s as follows: 
  \begin{align*}
    \textrm{(\ref{rel-10})} 
    \Leftrightarrow & [\ttal_2 , \tal_7 \tal_8 \tal_7^{-1}]=1 
    \ \Leftrightarrow [ \ttal_2 ,\al_8^{-1}]=1 \\
    \Leftrightarrow & 
    [(\be_3 \be_4 \cdot \be_3 \be_4 \be_3^{-1} \cdot \be_2 \be_4 \be_2^{-1})^{-1} \be_2 
    (\be_3 \be_4 \cdot \be_3 \be_4 \be_3^{-1} \cdot \be_2 \be_4 \be_2^{-1}), \\
    & \quad \be_1 \be_2 \be_4 \be_3 \be_4 \be_2 \be_4 \be_2^{-1} \be_1 \be_4 \be_1^{-1}]=1 \\
    \Leftrightarrow & 
    [ \be_2,
    \be_4 \be_3 \be_4 \be_2 \be_4 
    \be_1 \be_4 \be_3 \be_4  \be_1 \be_4 \be_1^{-1}
    \be_4^{-1} \be_3^{-1} \be_4^{-1}]=1.  
  \end{align*}
  Since 
  \begin{align*}
    &\be_4 \be_3 \be_4 \be_2 \be_4 
    \be_1 \be_4 \be_3 \be_4 \be_1 \be_4 \be_1^{-1}
    \be_4^{-1} \be_3^{-1} \be_4^{-1} 
    =\be_4 \be_3 \be_4 \be_2 \be_4 \be_3 \be_4 \be_3^{-1} \be_1  
  \end{align*}
  and $[\be_2 ,\be_3^{-1} \be_1 ]=1$, (\ref{rel-10}) is equivalent to 
  \begin{align*}
     \be_2 \cdot \be_4 \be_3 \be_4 \be_2 \be_4 \be_3 \be_4 
     =\be_4 \be_3 \be_4 \be_2 \be_4 \be_3 \be_4 \cdot \be_2 .
  \end{align*}
  This is shown as 
  \begin{align*}
    &\be_2 \cdot \be_4 \be_3 \be_4 \be_2 \be_4 \be_3 \be_4 \cdot \be_2^{-1}
    =\be_2 \be_4 (\be_2^{-1} \be_2) \be_3 \be_4 \be_2 \be_4 \be_3 \be_4 \be_2^{-1} \\
    &=\be_2 \be_4 \be_2^{-1} \be_3 \be_2 \be_4 \be_2 \be_4 \be_3 \be_4 \be_2^{-1}
    =\be_2 \be_4 \be_2^{-1} \be_3 \be_4 (\be_3^{-1}  \be_3 ) 
    \be_2 \be_4 \be_2 \be_3 \be_4 \be_2^{-1} \\
    &=\be_3 \be_4 \be_3^{-1}  \be_2 \be_4 \be_2^{-1} \be_3  
    \be_2 \be_4 \be_2 \be_3 \be_4 \be_2^{-1}
    =\be_3 \be_4 \be_3^{-1}  \be_2 \be_4 \be_3 \be_4 \be_3 \be_2 \be_4 \be_2^{-1} \\
    &=\be_3 \be_4 \be_3^{-1} \be_2 \be_3 \be_4 \be_3 \be_4 \be_2 \be_4 \be_2^{-1}
    =\be_3 \be_4 \be_2 \be_3 \be_3^{-1} \be_4 \be_3 \be_2^{-1} \be_4 \be_2 \be_4 \\
    &=\be_3 \be_4 \be_2 \be_3  \be_2^{-1} \be_4 \be_2 \be_3^{-1} \be_4 \be_3 \be_4 
    =\be_3 \be_4 \be_3 \be_4 \be_2 \be_3^{-1} \be_4 \be_3 \be_4  \\
    &=\be_4 \be_3 \be_4 \be_3 \be_2 \be_3^{-1} \be_4 \be_3 \be_4
    =\be_4 \be_3 \be_4 \be_2  \be_4 \be_3 \be_4 .
  \end{align*}
  Therefore, the proof is completed. 
\end{proof}

\begin{Rem}
  \label{rem:a11}
  Note that the relations (\ref{rel-11}) and (\ref{rel-11'}') 
  follow from others. 
  Indeed, by (\ref{rel-10}), we have 
  \begin{align*}
    \textrm{(\ref{rel-11})} 
    \Leftrightarrow & [\al_1^{-1} \tal_8 \al_1 ,\tal_3]=1 
    \Leftrightarrow [\al_1^{-1} \tal_8^{-1} \al_1 ,\tal_3]=1 \\
    \Leftrightarrow &[\al_1^{-1} \cdot \al_7 \al_8^{-1} \al_7^{-1} \cdot \al_1 ,\tal_3]=1 \\
    \Leftrightarrow & 
    [\be_1^{-1} \cdot \be_1 \be_4 \be_2 \be_4 \be_3 \be_4 \be_2 \be_4 \be_2^{-1} \cdot \be_1,\be_3]=1 \\
    \Leftrightarrow & 
    [\be_4  \be_2 \be_4 \be_3 \be_4 \be_2 \be_4 ,\be_3]=1 ,
  \end{align*}
and this follows from (A)--(C). 
\end{Rem}

Summarizing the above arguments, we obtain the following theorem: 
\begin{Th}\label{th-3dim-plane}
  \begin{align*}
    \pi_1(X^{(3)}\cap H)=\left\langle \be_4,\be_1,\be_2,\be_3  \left| 
        \begin{array}{l}
          [\be_i,\be_j]=1, \ [\be_i \be_4 \be_i^{-1}, \be_j \be_4 \be_j^{-1}]=1 
          \ (1\leq i<j \leq 3) \\
          (\be_4\be_k)^2=(\be_k\be_4)^2 \ (1\leq k \leq 3)
        \end{array}
      \right. \right\rangle .
  \end{align*} 
\end{Th}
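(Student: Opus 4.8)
The plan is to compute $\pi_1(X^{(3)}\cap H)$ directly by the van Kampen--Zariski theorem applied to the pencil of lines $\{\CL_\la : y=\la(x+1)\}$ through $(-1,0)$, and then to simplify the resulting presentation; since the isomorphism \eqref{eq-isom-plane} identifies $\pi_1(X^{(3)}\cap H)$ with $\pi_1(X^{(3)})$, this is enough. Concretely I would fix a generic base line $\CL_{a_0}$, on which $\pi_1(\CL_{a_0}\cap X^{(3)})$ is free on eight loops $\al_1,\dots,\al_8$ --- one around each point of $\CL_{a_0}\cap\big((L_0\cup\dots\cup L_3)\cup Q\big)$, ordered as in the table of Step~(0) --- subject to $\al_1\cdots\al_8=1$. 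The non-generic parameters are exactly $\la=0,\pm a_1,\dots,\pm a_{10},\infty$ of Table \ref{table:ai}, and at each of them the local monodromy (a tangency with $Q$, a passage through a node of $Q$, or a passage through a line--line or line--$Q$ intersection) contributes one relation; carrying the loops along the half-turns between consecutive critical values produces the explicit conjugated generators $\tal_i,\ttal_i$. The symmetry $[x:y:z]\mapsto[x:-y:z]$ of $X^{(3)}\cap H$ transports the relations at $a_1,\dots,a_{10}$ to those at $-a_1,\dots,-a_{10}$, and the relation at $\la=\infty$ turns out to be redundant, so the full set of defining relations is the list (0)--(11) together with its primed counterpart.

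Next I would cut down the generating set. The relations (0), (1), (6), (6$'$) express $\al_5,\al_6,\al_7,\al_8$ in terms of $\al_1,\al_2,\al_3,\al_4$, so $\pi_1(X^{(3)}\cap H)$ is generated by $\al_1,\dots,\al_4$; I then pass to the new generators $\be_1=\al_1$, $\be_2=\al_2$, $\be_3=\al_4^{-1}\al_3\al_4$, $\be_4=\al_3\al_4\al_3^{-1}$, which are recovered from the $\al_i$ by \eqref{eq-alpha-beta} and in which $\be_1,\be_2,\be_3$ play symmetric roles. The argument then splits into two reductions. First, Lemma \ref{lem-ABC} shows that the relations coming from the four lines and from $Q$ away from $L_0$ --- namely (2),(3),(4),(5) and (0$'$),(2$'$),(4$'$),(5$'$) --- are together equivalent to the clean symmetric set (A)~$[\be_i,\be_j]=1$, (B)~$[\be_i\be_4\be_i^{-1},\be_j\be_4\be_j^{-1}]=1$, (C)~$(\be_4\be_k)^2=(\be_k\be_4)^2$ for $1\le i<j\le 3$ and $1\le k\le 3$. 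Second, the following lemma shows that the remaining relations (7)--(10) and (7$'$)--(10$'$), all of which involve the loop $\al_8$ around $L_0$, are consequences of (A)--(C), while Remark \ref{rem:a11} disposes of (11) and (11$'$). Putting these together yields the asserted presentation on $\be_4,\be_1,\be_2,\be_3$.

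The main obstacle is bookkeeping rather than conceptual. One must get the cyclic ordering of the eight intersection points on each $\CL_\la$ exactly right and track, consistently with the fixed convention that loops approach via the upper half-plane, how each $\al_i$ evolves into $\tal_i$ and $\ttal_i$ across each half-turn; a sign error here propagates into every subsequent relation. Once the relations are in place, verifying that (7)--(10) and (7$'$)--(10$'$) follow from (A)--(C) --- and likewise the reductions inside Lemma \ref{lem-ABC} --- requires long but essentially mechanical manipulations in the free group, using repeatedly $[\be_i,\be_j]=1$, the braid-type relations (C), and the derived identity $[\be_i^{-1}\be_4\be_i,\be_j^{-1}\be_4\be_j]=1$. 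I would carry out these word computations last, after the presentation on $\al_1,\dots,\al_4$ is secure, since that is where errors are most likely to creep in.
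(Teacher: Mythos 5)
Your proposal follows essentially the same route as the paper: the van Kampen--Zariski analysis of the pencil $\CL_{\la}$ through $(-1,0)$ with monodromy relations at $0,\pm a_1,\ldots,\pm a_{10}$, elimination of $\al_5,\ldots,\al_8$ via the relations (0), (1), (6), (6$'$), the change of generators to $\be_1,\ldots,\be_4$, the equivalence of Lemma \ref{lem-ABC} with (A)--(C), the redundancy of (7)--(10), (7$'$)--(10$'$) and of the relation at $\la=\infty$. The remaining work you defer (tracking the $\tal_i,\ttal_i$ across each half-turn and the free-group word verifications) is exactly the content the paper carries out explicitly, so the plan is sound.
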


\subsection{Correspondence between $\be_i$'s and $\ga_j$'s}
To complete the proof of Theorem \ref{th-3dim}, 
we give relations in $\pi_1(X^{(3)})$ between the loops 
$\be_i$'s and $\ga_j$'s. 

By the parametrization (\ref{eq-plane-parameter}) of the plane $H$, 
we have 
$$
x_0+4x_3=-3x,\quad x_1+x_2=-2x ,
$$
and hence the defining equation of $H$ is given as 
\begin{align*}
  3x_1+3x_2-8x_3=2x_0 .
\end{align*}

We fix a sufficiently small positive number $\vep$. 
We consider a line $\CL'$ in $H$ defined as 
\begin{align}
  \label{eq-Lprime}
  y=-\frac{1}{3-\vep}(x-4)
\end{align}
which passes through $(x,y)=(1+\vep,1)$. 
\begin{figure}[h]
  \centering{
  \includegraphics[scale=0.8]{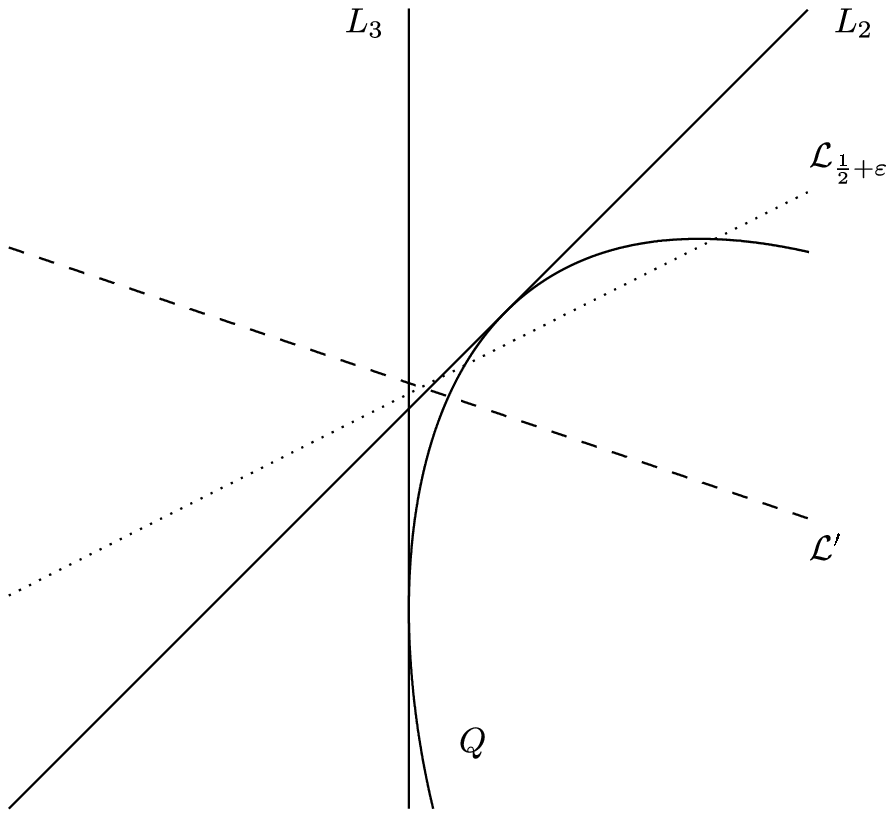} 
  }
  \caption{$\CL_{\frac{1}{2}+\vep}$ and $\CL'$ around $(1,1)\in \R^2$}
  \label{fig-sing3_LL}
\end{figure}
The loops $\al_1 ,\tal_3 ,\al_2 ,\tal_4$ in $\CL_{\frac{1}{2}+\vep}$ 
(see Figure \ref{fig-a45}) 
naturally define those in $\CL'$ (we use same notations). 
Since (\ref{eq-Lprime}) is expressed as 
$$
\frac{1}{2}(x_2-x_1) =-\frac{1}{3-\vep}x_0
$$
by (\ref{eq-plane-parameter}), 
the line $\CL' \subset \C^3$ is defined by 
\begin{align*}
  3x_1+3x_2-8x_3=2, \quad 
  x_1-x_2=\frac{2}{3-\vep} .
\end{align*}
By straightforward calculation, this line parametrized 
by $t\in \C$ as follows: 
\begin{align}
  \label{eq-Lprime-parameter}
  (x_1, x_2, x_3)=\left( \frac{6-\vep}{9-3\vep},\frac{-\vep}{9-3\vep},0 \right)
  +t\cdot \left( \frac{4}{3},\frac{4}{3},1  \right) .
\end{align}
If we identify $\CL'$ with $\C$ by $t$, then the intersection points 
$\CL' \cap (x_1=0)$, $\CL' \cap (x_2=0)$, $\CL' \cap (x_3=0)$ and $\CL' \cap S^{(3)}$ 
correspond to 
\begin{align*}
  t=-\frac{6-\vep}{4(3-\vep)},\quad 
  t=\frac{\vep}{4(3-\vep)},\quad  t=0,\quad 
  t=t'_1,t'_2,t'_3,t'_4 ,
\end{align*}
respectively, where $0<t'_1<t'_2<t'_3<t'_4$.
By definition of $\ga_i$'s and commutativity among $\ga_1, \ga_2, \ga_3$, 
the loop $\ga_0$ (resp. $\ga_1, \ga_2, \ga_3$) coincides with 
a loop which goes once around $t=t'_1$ 
(resp. $t=-\frac{6-\vep}{4(3-\vep)}$, $t=\frac{\vep}{4(3-\vep)}$, $t=0$)
approaching this point through the upper half-plane of the $t$-space. 

The loops $\al_1 ,\al_2 ,\tal_3 ,\tal_4$ in $\CL'$ (or $\CL_{\frac{1}{2}+\vep}$)
are defined under the parametrization by $x$. 
We should compare the parametrization by $x$ with that by $t$, 
and relate $\al_1 ,\al_2 ,\tal_3 ,\tal_4$ to $\ga_0, \ga_1, \ga_2 ,\ga_3$. 
The correspondence between the $x$-space and $t$-space is given by 
\begin{align}
  \label{eq-x-t}
  t=\frac{-x+1}{x-4} \ \left( =-1-\frac{3}{x-4}\right) .
\end{align}
Indeed, by (\ref{eq-Lprime}), we have
\begin{align*}
  x_1 =\frac{-x-y}{x-4}
  =\frac{6-\vep}{9-3\vep}+\frac{4}{3}t , \quad
  x_2 =\frac{y-x}{x-4}
  =\frac{-\vep}{9-3\vep}+\frac{4}{3}t  , \quad
  x_3 =t,
\end{align*}
and these expressions are coincide with (\ref{eq-Lprime-parameter}). 
The M\"obius transformation (\ref{eq-x-t}) is decomposed into
four elementary transformations 
$$
w=x-4,\quad v=\frac{1}{w} ,\quad u=-3v ,\quad t=u-1. 
$$
We see the change of $\al_1 ,\al_2 ,\tal_3 ,\tal_4$ under each transformations. 
In the $x$-space, they are drawn as follows.
\begin{align*}
  \includegraphics[scale=0.8]{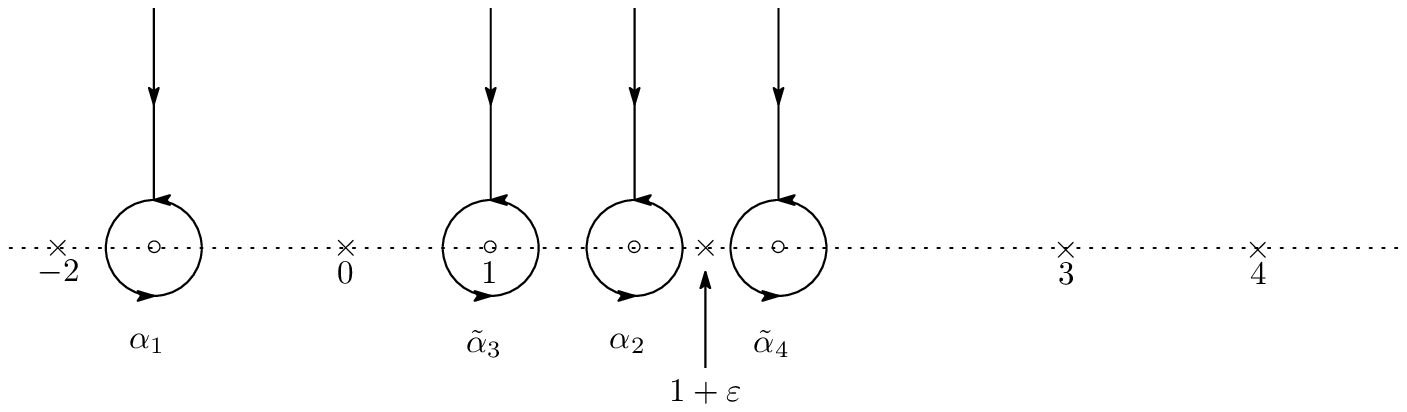} 
\end{align*}
\begin{enumerate}[(i)]
\item $w=x-4$; the changes are trivial.
  \begin{align*}
    \includegraphics[scale=0.8]{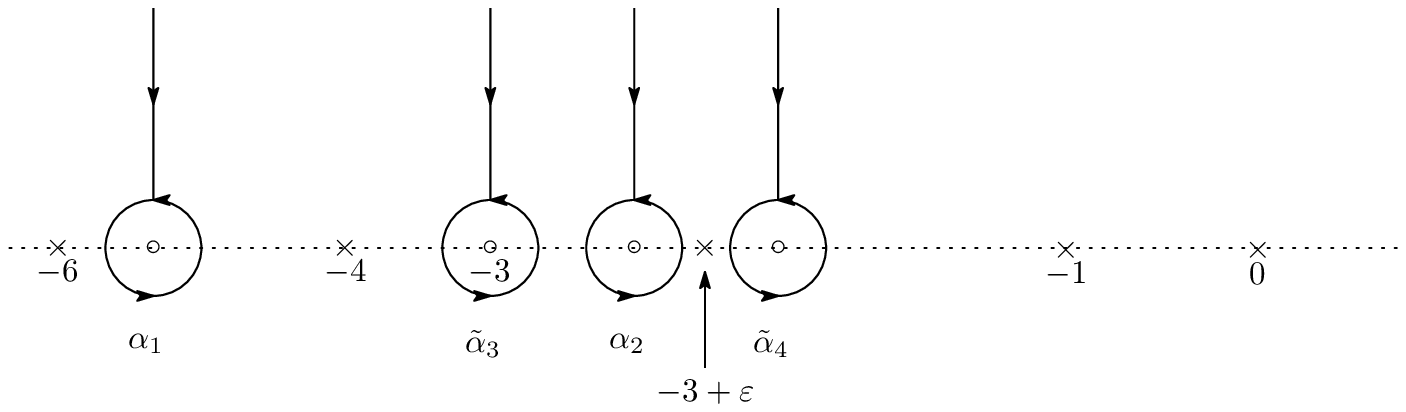} 
  \end{align*}
\item $v=\frac{1}{w}$; the approach to each circle is through 
  the lower half-plane in the $v$-space. 
  \begin{align*}
    \includegraphics[scale=0.8]{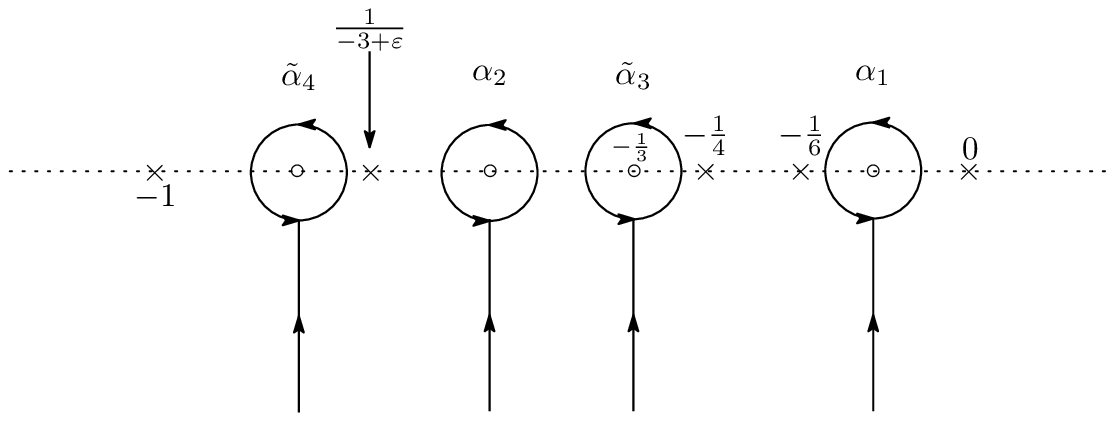} 
  \end{align*}
\item $u=-3v$; the approaches are changed again. 
  \begin{align*}
    \includegraphics[scale=0.8]{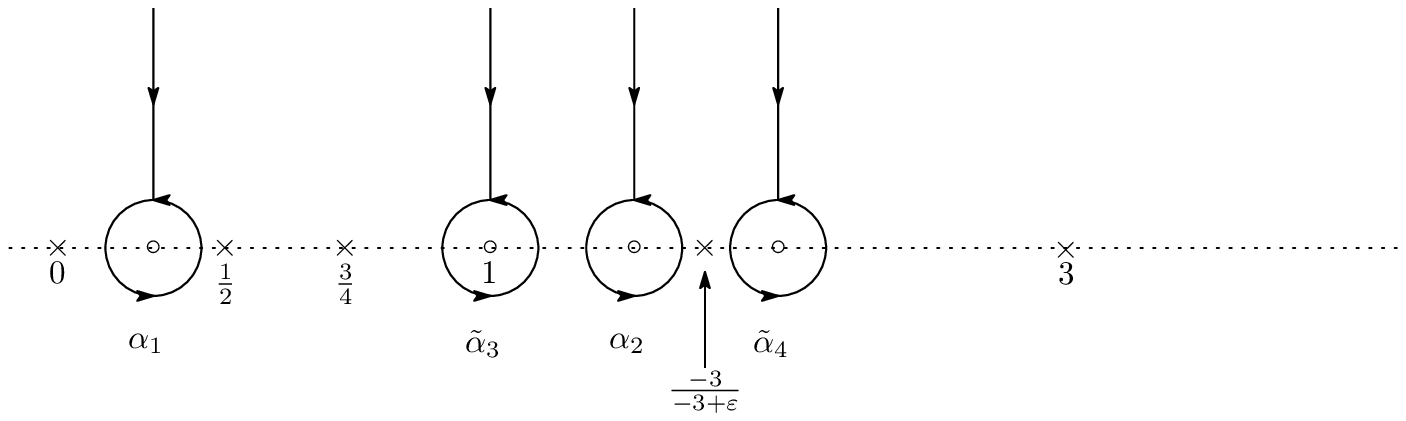} 
  \end{align*}
\item $t=u-1$; the changes are trivial.
  \begin{align*}
    \includegraphics[scale=0.8]{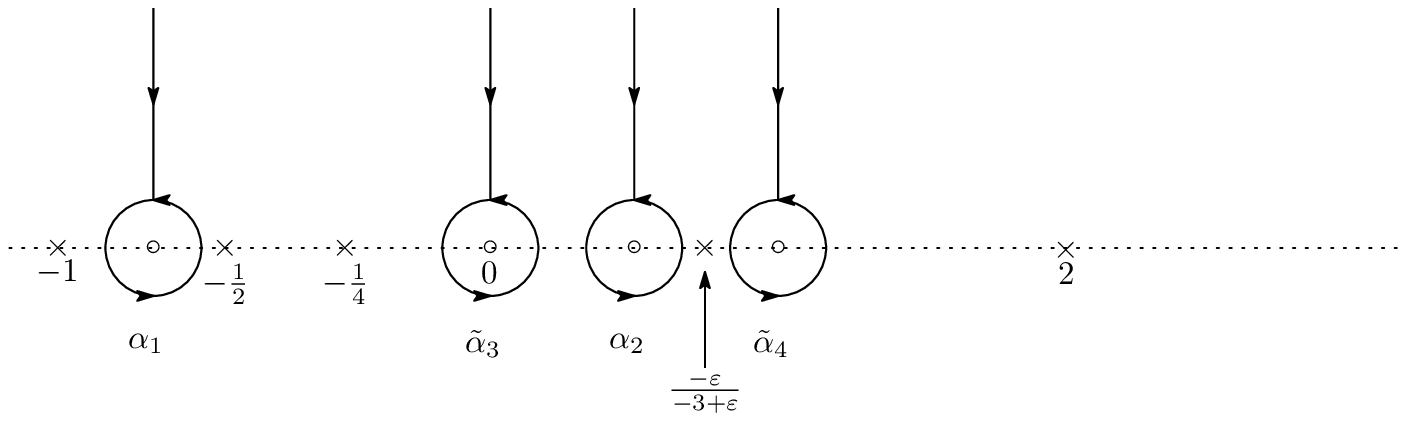} 
  \end{align*}
\end{enumerate}
As mentioned above, the loops $\al_1 ,\tal_3 ,\al_2 ,\tal_4$ 
in the last picture coincide with $\ga_1 ,\ga_3 ,\ga_2 ,\ga_0$, respectively.  
Therefore, we obtain the following lemma. 
\begin{Lem}
  The loops $\al_1 ,\al_2 ,\tal_3 ,\tal_4$ in $\CL'$ coincide with 
  $\ga_1, \ga_2 ,\ga_3, \ga_0$ as elements in $\pi_1(X^{(3)})$, respectively. 
  Therefore, $\be_1(=\al_1),\be_2(=\al_2) ,\be_3(=\tal_3) ,\be_4(=\tal_4) \in \pi_1 (X^{(3)}\cap H)$ are 
  mapped into $\ga_1, \ga_2 ,\ga_3, \ga_0$ under the isomorphism (\ref{eq-isom-plane}), 
  respectively. 
\end{Lem}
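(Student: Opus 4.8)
The plan is to compare, on the single line $\CL'$, two descriptions of four generating loops of $\pi_1(X^{(3)}\cap H\cap\CL')$. On one side stand $\al_1,\al_2,\tal_3,\tal_4$: these arise from the pencil computation at $\CL_{\frac{1}{2}+\vep}$ (the loops of Figure \ref{fig-a45}), transported to $\CL'$ through lines generic for $X^{(3)}\cap H$, and are drawn using the coordinate $x$ restricted to $\CL'$. On the other side stand the loops $\ga_0,\ga_1,\ga_2,\ga_3$ of Section \ref{section-preliminaries}: since $\CL'$ meets $(x_1=0),(x_2=0),(x_3=0)$ at $t=-\frac{6-\vep}{4(3-\vep)},\,\frac{\vep}{4(3-\vep)},\,0$ and meets $S^{(3)}$ nearest the origin at $t=t'_1$, and since $\ga_1,\ga_2,\ga_3$ commute, the discussion preceding the statement already identifies $\ga_1,\ga_2,\ga_3,\ga_0$ with small loops around these four punctures, each approached through the upper half-plane of the coordinate $t$ of (\ref{eq-Lprime-parameter}). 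Thus everything reduces to translating the $x$-picture of $\al_1,\al_2,\tal_3,\tal_4$ into the $t$-picture.

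First I would record the coordinate change: substituting (\ref{eq-Lprime}) into the parametrization (\ref{eq-plane-parameter}) and matching the result with (\ref{eq-Lprime-parameter}) yields the M\"obius transformation $t=\frac{-x+1}{x-4}$ of (\ref{eq-x-t}) between the two identifications $\CL'\simeq\C$. I would then factor it as $w=x-4$, $v=\frac{1}{w}$, $u=-3v$, $t=u-1$ and push the four loops through each factor, as in the displayed pictures: the translations $w=x-4$ and $t=u-1$ change nothing; the inversion $v=\frac{1}{w}$ exchanges the upper and lower half-planes, so an upper-half-plane approach becomes a lower one, and it reverses the order of the real punctures, sending the $L_0$-puncture at $x=4$ off to $\infty$ where it no longer interferes; the negative dilation $u=-3v$ again exchanges the two half-planes and reverses the order on the real line. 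The two half-plane exchanges cancel, so in the final $t$-picture $\al_1,\tal_3,\al_2,\tal_4$ appear as consecutive small loops approached from above, in the left-to-right order matching $\ga_1,\ga_3,\ga_2,\ga_0$. Hence $\al_1=\ga_1$, $\tal_3=\ga_3$, $\al_2=\ga_2$, $\tal_4=\ga_0$ in $\pi_1(X^{(3)})$.

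Finally, by (\ref{eq-beta-alpha}) one has $\be_1=\al_1$, $\be_2=\al_2$, $\be_3=\tal_3$, $\be_4=\tal_4$, so the isomorphism (\ref{eq-isom-plane}) carries $\be_1,\be_2,\be_3,\be_4$ to $\ga_1,\ga_2,\ga_3,\ga_0$; together with Theorem \ref{th-3dim-plane} this is exactly Theorem \ref{th-3dim}. The substantive part is the purely topological bookkeeping: checking that the deformation from $\CL_{\frac{1}{2}+\vep}$ to $\CL'$ stays among lines generic for $X^{(3)}\cap H$, so that $\al_1,\al_2,\tal_3,\tal_4$ transport unambiguously; keeping exact track of the two orientation reversals and of how the real punctures get reordered under $v=\frac{1}{w}$ and $u=-3v$; and absorbing the standard discrepancy between the base point $\sqrt{-1}\infty$ used in the pictures and the honest base point $(-1,0)\in\CL'$ noted in the text. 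I expect the ordering and orientation computation through the inversion $v=\frac{1}{w}$ to be the step that must be carried out most carefully.
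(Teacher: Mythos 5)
Your proposal is correct and follows essentially the same route as the paper: identify $\ga_0,\ga_1,\ga_2,\ga_3$ with upper-half-plane loops around the punctures of $\CL'$ in the $t$-parametrization (\ref{eq-Lprime-parameter}), relate the $x$- and $t$-coordinates by the M\"obius map (\ref{eq-x-t}), factor it as $w=x-4$, $v=\tfrac{1}{w}$, $u=-3v$, $t=u-1$, and track the loops $\al_1,\tal_3,\al_2,\tal_4$ through each factor, with the two half-plane reversals (from $v=\tfrac1w$ and $u=-3v$) cancelling and the ordering working out to $\ga_1,\ga_3,\ga_2,\ga_0$. This is exactly the paper's argument, including the final passage to $\be_1,\be_2,\be_3,\be_4$ via (\ref{eq-beta-alpha}).
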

By this lemma and Theorem \ref{th-3dim-plane}, we obtain Theorem \ref{th-3dim}.

\section{The covering space ---the complement of hyperplane arrangement}
\subsection{Covering spaces}\label{subsection-covering}
We consider a branched $2^n$-covering
$$
\phi : \C^n \to \C^n ;\quad 
(\xi_1 ,\ldots ,\xi_n) \mapsto 
(x_1 ,\ldots ,x_n) =(\xi_1^2 ,\ldots ,\xi_n^2)
$$
of $\C^n$, and we put $\tilde{S}^{(n)}=\phi^{-1} (S^{(n)})$. 
The pull-back of $F_n (x)$ by $\phi$ 
is decomposed into the product of linear forms in $\xi_i$'s: 
$$
F_n(\phi(\xi))=\prod _{(a_1 ,\ldots ,a_n) \in \ztwo^n} 
\left( 1-\sum_{k=1}^n (-1)^{a_k} \xi_k \right) .
$$
Thus, $\tilde{S}^{(n)}$ is the union of hyperplanes in $\C^n$: 
\begin{align*}
  \tilde{S}^{(n)}
  =\bigcup_{(a_1 ,\ldots ,a_n) \in \ztwo^n } H(a_1 ,\ldots ,a_n) ,\quad 
  H(a_1 ,\ldots ,a_n) =\left( 1-\sum_{k=1}^n (-1)^{a_k} \xi_k =0 \right) .
\end{align*}
In this section, we consider the fundamental group of 
\begin{align*}
  \tilde{X}^{(n)}=\phi^{-1} (X^{(n)})=\C^n - \left( \bigcup_{k=1}^n H_k \cup 
    \bigcup_{(a_1 ,\ldots ,a_n) \in \ztwo^n } H(a_1 ,\ldots ,a_n) \right) ,
\end{align*}
where $H_k=(\xi_k=0)$. 
The restriction 
$$
\phi :\tilde{X}^{(n)} \longrightarrow X^{(n)}
$$
of $\phi$ to $\tilde{X}^{(n)}$ is a $(\Z/ 2\Z)^n$-Galois covering. 
Hence, we have a short exact sequence 
\begin{align*}
  1 \longrightarrow \pi_1 (\tilde{X}^{(n)}) \overset{\phi_*}{\longrightarrow} 
  \pi_1 (X^{(n)}) \longrightarrow (\Z / 2\Z)^n \longrightarrow 1 .
\end{align*}

For $(a_1 ,\ldots ,a_n) \in \ztwo^n (=(\Z / 2\Z)^n)$, let 
$$
\xi_{a_1 \cdots a_n} =\left( \frac{(-1)^{a_1}}{\sqrt{2}n},\ldots ,\frac{(-1)^{a_n}}{\sqrt{2}n} \right)
\in \phi^{-1}(\dot{x}). 
$$
We define a path $\tga_k^{(a_1 \cdots a_n)}$ and a loop $\tga_0^{(a_1 \cdots a_n)}$ in $\tilde{X}^{(n)}$
as lifts of $\ga_k$ and $\ga_0$ such that
$\tga_k^{(a_1 \cdots a_n)} (0) =\tga_0^{(a_1 \cdots a_n)} (0) =\xi_{a_1 \cdots a_n}$, respectively.  
Note that 
$\tga_k^{(a_1 \cdots a_n)} (1)=\xi_{a_1 ,\dots, a_k+1 ,\ldots, a_n}$ and 
$\tga_0^{(a_1 \cdots a_n)} (1) =\xi_{a_1 \cdots a_n}$. 
Of course, we have 
$\phi_{*} (\tga_k^{(a_1 \cdots a_n)})=\ga_k$ and 
$\phi_{*} (\tga_0^{(a_1 \cdots a_n)}) =\ga_0$.
For $1\leq i_1 <i_2 <\dots <i_k \leq n$, 
we put 
\begin{align*}
  \tau^{(i_1 \cdots i_k)}
  =\tga_{i_1}^{(0 \cdots 0)}\tga_{i_2}^{(0 \cdots \overset{i_1}{1} \cdots 0)}
  \tga_{i_3}^{(0 \cdots \overset{i_1}{1} \cdots \overset{i_2}{1} \cdots 0)}
  \cdots \tga_{i_k}^{(0 \cdots \overset{i_1}{1} \cdots \overset{i_2\cdots}{1\cdots} 
    \overset{i_{k-1}}{1} \cdots 0)} .
\end{align*}
We consider loops in $\pi_1 (\tilde{X}^{(n)} ,\xi_{0\cdots 0})$: 
\begin{align*}
  &\la_0 = \tga_0^{(0 \cdots 0)}, \\
  &\la_k =\tga_k^{(0 \cdots 0 \cdots 0)}\tga_k^{(0 \cdots 1 \cdots 0)} \quad 
  (k=1,\dots ,n), \\
  &\la_0^{(i_1 \cdots i_k)} 
  =\tau^{(i_1 \cdots i_k)} \ga_0^{(0 \cdots \overset{i_1}{1} \cdots \overset{i_2\cdots}{1\cdots} 
    \overset{i_k}{1} \cdots 0)} \overline{\tau^{(i_1 \cdots i_k)}} \quad
  (1\leq i_1 <i_2 <\dots <i_k \leq n).
\end{align*}
Figure \ref{fig-lines2dim} shows some loops and paths in $\tilde{X}^{(2)}$. 
For example, $\la_0^{(12)}$ is defined as 
$\la_0^{(12)}=\tga_1^{(00)}\tga_2^{(10)} \cdot \tga_0^{(11)} 
\cdot \overline{\tga_1^{(00)}\tga_2^{(10)}}$. 
\begin{figure}[h]
  \centering{
  \includegraphics[scale=0.8]{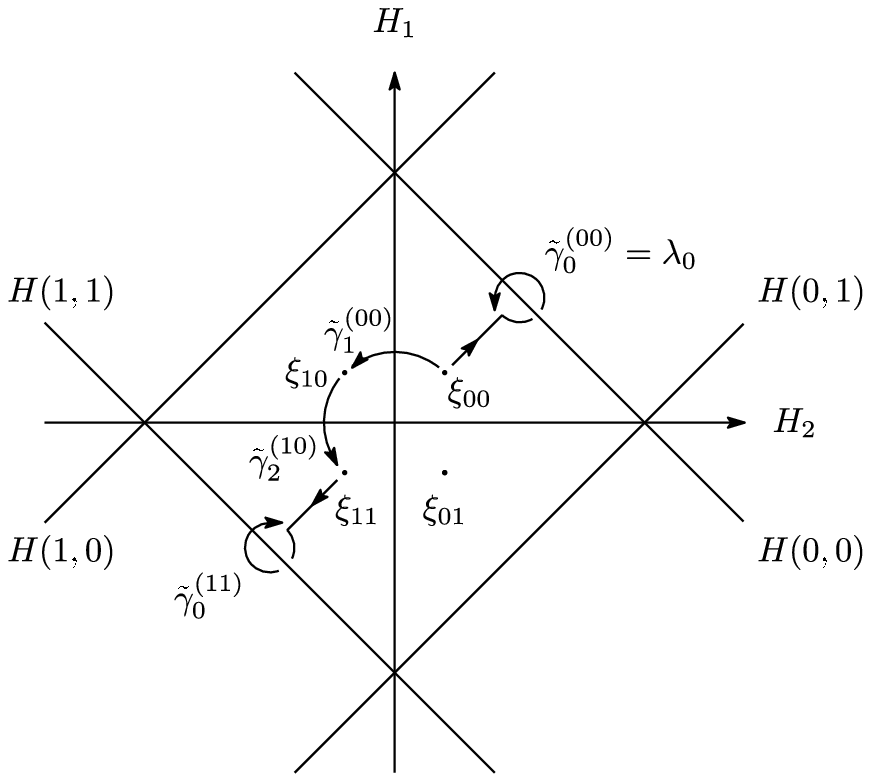} 
  }
  \caption{Some loops and paths in $\tilde{X}^{(2)}$.}
  \label{fig-lines2dim}
\end{figure}

By the definition, we obtain the following. 
\begin{Lem}
  \label{lem-covering}
  We have 
  \begin{align*}
    &\phi_{*} (\la_0 )=\ga_0 ,\quad
    \phi_{*} (\la_k )=\ga_k^2 ,\\ 
    &\phi_{*} (\la_0^{(i_1 \cdots i_k)})
    =(\ga_{i_1}\ga_{i_2}\cdots \ga_{i_k}) \ga_0  (\ga_{i_1}\ga_{i_2}\cdots \ga_{i_k})^{-1} .
  \end{align*}
\end{Lem}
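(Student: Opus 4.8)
The plan is to reduce everything to the functoriality of $\phi_{*}$ together with the two facts recorded just above the statement: $\phi_{*}(\tga_k^{(a_1\cdots a_n)})=\ga_k$ and $\phi_{*}(\tga_0^{(a_1\cdots a_n)})=\ga_0$ for every sheet label $(a_1,\ldots,a_n)\in\ztwo^n$. Since $\phi$ is a covering map, $\phi_{*}$ sends a concatenation of paths to the concatenation of their images and a reversed path to the reversed image; thus on any composable string of lifted loops and paths it is multiplicative, and a lift of a loop $\ga$ of the form $\tau\,\delta\,\overline{\tau}$ is mapped to $\phi_{*}(\tau)\,\phi_{*}(\delta)\,\phi_{*}(\tau)^{-1}$. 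So the only genuine task is to check that each of $\la_0$, $\la_k$, $\la_0^{(i_1\cdots i_k)}$ really is a loop based at $\xi_{0\cdots 0}$ — i.e. that the concatenations are legitimate — and then to read off the image term by term.

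For $\la_0=\tga_0^{(0\cdots 0)}$ this is immediate: by the endpoint formula $\tga_0^{(a_1\cdots a_n)}(1)=\xi_{a_1\cdots a_n}$ the path $\tga_0^{(0\cdots 0)}$ closes up at $\xi_{0\cdots 0}$, and $\phi_{*}(\la_0)=\ga_0$. For $\la_k=\tga_k^{(0\cdots 0)}\tga_k^{(0\cdots\overset{k}{1}\cdots 0)}$, the formula $\tga_k^{(a_1\cdots a_n)}(1)=\xi_{a_1,\ldots,a_k+1,\ldots,a_n}$ (with $a_k+1$ read in $\Z/2\Z$) shows that the first factor runs from $\xi_{0\cdots 0}$ to $\xi_{0\cdots\overset{k}{1}\cdots 0}$ and the second from $\xi_{0\cdots\overset{k}{1}\cdots 0}$ back to $\xi_{0\cdots 0}$; hence $\la_k$ is a loop at $\xi_{0\cdots 0}$ and $\phi_{*}(\la_k)=\ga_k\cdot\ga_k=\ga_k^2$.

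For $\la_0^{(i_1\cdots i_k)}$ I would first run down the definition of $\tau^{(i_1\cdots i_k)}$ and check, again via the endpoint formula for $\tga_{i_j}^{(\cdots)}$, that consecutive factors compose: $\tga_{i_1}^{(0\cdots 0)}$ ends on the sheet $\xi_{0\cdots\overset{i_1}{1}\cdots 0}$, which is the chosen starting sheet of $\tga_{i_2}^{(0\cdots\overset{i_1}{1}\cdots 0)}$, and so on, so that $\tau^{(i_1\cdots i_k)}$ is a well-defined path from $\xi_{0\cdots 0}$ to the sheet $\xi_{0\cdots\overset{i_1}{1}\cdots\overset{i_k}{1}\cdots 0}$ whose $1$'s sit exactly in positions $i_1,\ldots,i_k$. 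That sheet is the base point of the lift $\tga_0^{(0\cdots\overset{i_1}{1}\cdots\overset{i_k}{1}\cdots 0)}$ of $\ga_0$, which closes up there, and $\overline{\tau^{(i_1\cdots i_k)}}$ returns to $\xi_{0\cdots 0}$; hence $\la_0^{(i_1\cdots i_k)}$ is a loop at $\xi_{0\cdots 0}$. Applying $\phi_{*}$ and using that each $\tga_{i_j}^{(\cdots)}$ projects to $\ga_{i_j}$ regardless of its sheet label, we get $\phi_{*}(\tau^{(i_1\cdots i_k)})=\ga_{i_1}\ga_{i_2}\cdots\ga_{i_k}$, $\phi_{*}(\overline{\tau^{(i_1\cdots i_k)}})=(\ga_{i_1}\ga_{i_2}\cdots\ga_{i_k})^{-1}$, and $\phi_{*}$ of the middle factor equals $\ga_0$, whence $\phi_{*}(\la_0^{(i_1\cdots i_k)})=(\ga_{i_1}\cdots\ga_{i_k})\ga_0(\ga_{i_1}\cdots\ga_{i_k})^{-1}$.

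There is no real obstacle here; the only point requiring care is the bookkeeping of sheet labels when verifying that the string defining $\tau^{(i_1\cdots i_k)}$ (and the two factors of $\la_k$) is actually composable, and for that the displayed formula $\tga_k^{(a_1\cdots a_n)}(1)=\xi_{a_1,\ldots,a_k+1,\ldots,a_n}$ is precisely the input needed — indeed the superscripts in the definition of $\tau^{(i_1\cdots i_k)}$ have been arranged exactly so that this works.
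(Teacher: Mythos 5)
Your proof is correct and is essentially the paper's own argument: the paper simply states that the lemma follows "by the definition," and your write-up just makes explicit the sheet-label bookkeeping and the fact that each lifted factor projects to the corresponding $\ga_{i_j}$ or $\ga_0$. Nothing further is needed.
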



\subsection{The case of $n=2$}
By using the Reidemeister-Schreier method, 
we obtain a presentation of $\pi_1 (\tilde{X}^{(2)})$. 
Since computations are similar to that in the next subsection, 
we do not give precise computations. 
\begin{Prop}
  The fundamental group 
  $\pi_1 (\tilde{X}^{(2)})$ has a presentation by 6 generators
  \begin{align*}
    \la_1,\la_2,\la_0, 
    \la_0^{(1)},\la_0^{(2)},\la_0^{(12)},
  \end{align*}
  and 5 defining relations
  \begin{align*}
    &[\la_1 ,\la_2]=1 ,  \\
    &\la_0^{(i)}\la_i\la_0=\la_0\la_0^{(i)}\la_i=\la_i\la_0\la_0^{(i)}  \quad (i=1,2), \\
    &\la_i\la_0^{(j)}\la_0^{(12)}=\la_0^{(j)}\la_0^{(12)}\la_i=\la_0^{(12)}\la_i\la_0^{(j)} 
    \quad (\{i,j\}=\{1,2\}) .
  \end{align*}
\end{Prop}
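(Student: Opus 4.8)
The plan is to apply the Reidemeister–Schreier method to the presentation of $\pi_1(X^{(2)})$ recalled in Section~\ref{section-intro}, namely
$$
\pi_1(X^{(2)})=\langle \ga_0,\ga_1,\ga_2 \mid [\ga_1,\ga_2]=1,\ (\ga_0\ga_1)^2=(\ga_1\ga_0)^2,\ (\ga_0\ga_2)^2=(\ga_2\ga_0)^2\rangle,
$$
together with the short exact sequence
$1\to\pi_1(\tilde X^{(2)})\xrightarrow{\phi_*}\pi_1(X^{(2)})\to(\Z/2\Z)^2\to1$
from Section~\ref{subsection-covering}. First I would fix a Schreier transversal for $\phi_*(\pi_1(\tilde X^{(2)}))$ in $\pi_1(X^{(2)})$; since the quotient is $(\Z/2\Z)^2$ with the images of $\ga_1,\ga_2$ as the two generators (and $\ga_0$ in the kernel), the natural transversal is $T=\{1,\ga_1,\ga_2,\ga_1\ga_2\}$. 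The Reidemeister–Schreier generators are then the nontrivial elements $\overline{t\ga_i}^{-1}t\ga_i$ for $t\in T$, $i\in\{0,1,2\}$; concretely these are $\ga_0$, $\ga_1\ga_0\ga_1^{-1}$, $\ga_2\ga_0\ga_2^{-1}$, $\ga_1\ga_2\ga_0(\ga_1\ga_2)^{-1}$ (from $i=0$), $\ga_1^2$, $\ga_2\ga_1\ga_2^{-1}\ga_1^{-1}$ (from $i=1$, which by $[\ga_1,\ga_2]=1$ reduces), and $\ga_2^2$, $\ga_1\ga_2\ga_1^{-1}\ga_2^{-1}$ (from $i=2$). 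Using Lemma~\ref{lem-covering} I would identify these with $\la_0$, $\la_0^{(1)}$, $\la_0^{(2)}$, $\la_0^{(12)}$, $\la_1$, $\la_2$ and two trivial generators (the commutator words map to $1$ under $\phi_*$ but, more to the point, the relator rewriting below will eliminate them); after discarding the redundant ones we are left with exactly the six generators $\la_1,\la_2,\la_0,\la_0^{(1)},\la_0^{(2)},\la_0^{(12)}$ claimed.

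Next I would compute the Reidemeister–Schreier relators: for each $t\in T$ and each defining relator $r$ of $\pi_1(X^{(2)})$, rewrite $trt^{-1}$ as a word in the Schreier generators via the standard rewriting process $\tau$. There are three relators and four coset representatives, giving $12$ relators a priori; the goal is to show they reduce to the five listed relations. The relator $[\ga_1,\ga_2]$ conjugated by the four elements of $T$ should collapse (using that $\ga_1^2,\ga_2^2$ and the commutators are among the generators) to the single relation $[\la_1,\la_2]=1$. The relator $(\ga_0\ga_1)^2(\ga_1\ga_0)^{-2}$ — equivalently $\ga_0\ga_1\ga_0\ga_1=\ga_1\ga_0\ga_1\ga_0$, i.e. $\ga_0\cdot\ga_1\ga_0\ga_1^{-1}\cdot\ga_1^2=\ga_1^2\cdot\ga_1^{-1}\ga_0\ga_1\cdot\ga_0$ rearranged — conjugated by $1$ and by $\ga_2$ should yield the two relations $\la_0^{(i)}\la_i\la_0=\la_0\la_0^{(i)}\la_i=\la_i\la_0\la_0^{(i)}$ for $i=1$, and conjugating by $\ga_2$ picks up the $\la_0^{(12)},\la_0^{(2)}$-flavoured relations; symmetrically the relator $(\ga_0\ga_2)^2(\ga_2\ga_0)^{-2}$ gives the $i=2$ cases and the other $\{i,j\}=\{2,1\}$ relation. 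I would organize this as: (i) conjugate each length-8 relator by each $t$; (ii) run the rewriting process, substituting $\ga_i^2\mapsto\la_i$, $\ga_i\ga_0\ga_i^{-1}\mapsto\la_0^{(i)}$ etc. and freely reducing; (iii) check that each resulting word is either trivial or one of the five claimed relations (or a consequence thereof, e.g. an obvious cyclic permutation/inverse).

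The main obstacle I expect is purely bookkeeping: the rewriting process applied to a length-$8$ relator conjugated by a transversal element produces long words, and one must carefully track which Schreier generator each two-letter segment $\overline{w\ga_i}^{-1}(w\ga_i)$ corresponds to, being attentive to the places where $\ga_i^{-1}$ rather than $\ga_i$ appears (which contributes the inverse Schreier generator). The chief risk is sign/orientation errors in this indexing and in the reduction modulo $[\ga_1,\ga_2]=1$ used to simplify coset representatives of words like $\ga_2\ga_1$. Since the statement explicitly says precise computations are omitted (and refers to the arXiv version), I would present the transversal, list the eight raw generators with their identification via Lemma~\ref{lem-covering}, state that the $12$ raw relators rewrite and reduce to the five displayed ones, and indicate that one verifies the three-term relations $\la_0^{(i)}\la_i\la_0=\la_0\la_0^{(i)}\la_i=\la_i\la_0\la_0^{(i)}$ and $\la_i\la_0^{(j)}\la_0^{(12)}=\la_0^{(j)}\la_0^{(12)}\la_i=\la_0^{(12)}\la_i\la_0^{(j)}$ come, respectively, from the two relators $(\ga_0\ga_k)^2=(\ga_k\ga_0)^2$ conjugated by $1$ and by the other $\ga_j$. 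A consistency check at the end — abelianizing both sides, or checking the Euler-characteristic/rank count against the known hyperplane-arrangement $\tilde X^{(2)}$ — guards against rewriting slips.
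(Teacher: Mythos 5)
Your plan follows exactly the route the paper takes: the Reidemeister--Schreier method applied to $\pi_1(X^{(2)})=\langle \ga_0,\ga_1,\ga_2\mid [\ga_1,\ga_2], (\ga_0\ga_1)^2(\ga_1\ga_0)^{-2},(\ga_0\ga_2)^2(\ga_2\ga_0)^{-2}\rangle$ with the Schreier transversal $T=\{1,\ga_1,\ga_2,\ga_1\ga_2\}$, rewriting the $12$ conjugated relators $trt^{-1}$, identifying the surviving generators with the $\la$'s via Lemma \ref{lem-covering}, and reducing to the five listed relations; the paper's own proof is the same sketch, deferring the bookkeeping to the analogous $n=3$ computation. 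So the approach is correct and essentially identical.

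One concrete slip to fix in your generator enumeration: the index-$4$ subgroup of the free group on $\ga_0,\ga_1,\ga_2$ has rank $1+4(3-1)=9$, so the raw Schreier generator list must have nine entries, not eight. You are missing the two generators coming from the coset representative $\ga_1\ga_2$ with $b=\ga_1,\ga_2$, namely $\ga_1\ga_2\ga_1\ga_2^{-1}$ and $\ga_1\ga_2^2\ga_1^{-1}$, while the word $\ga_1\ga_2\ga_1^{-1}\ga_2^{-1}$ you list is not a Schreier generator at all: for $t=\ga_1$, $b=\ga_2$ one has $\ol{\ga_1\ga_2}=\ga_1\ga_2\in T$, so that pair contributes the trivial element. (Also, the cyclic-permutation half of the $i=1$ triple relation arises from conjugating $(\ga_0\ga_1)^2(\ga_1\ga_0)^{-2}$ by $t=\ga_1$, not by $t=\ga_2$; the cosets $\ga_2$ and $\ga_1\ga_2$ produce the mixed triple involving $\la_0^{(1)}$ and $\la_0^{(12)}$.) Neither point derails your argument --- after rewriting, the commutator-type generators and the two you omitted are identified with $1$, $\la_1$, $\la_2$ respectively, exactly as in the $n=3$ case (compare (\ref{eq-RS-rel-1-1-1})--(\ref{eq-RS-rel-1-1-2}) and (\ref{eq-RS-rel-3-0}), (\ref{eq-RS-rel-3-1-2}), (\ref{eq-RS-rel-3-1-1}), (\ref{eq-RS-rel-3-2-2})) --- but the relator rewriting cannot be carried out consistently until the generator list and the coset assignments are corrected.
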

\begin{proof}[Sketch of Proof]
  Let $K$ be the free group generated by $\ga_0$, $\ga_1$, $\ga_2$, 
  and $\vph : K\to \pi_1(X^{(2)})$ be the natural epimorphism. 
  The subgroup $K_1 =\vph^{-1}(\pi_1(\tilde{X}^{(2)}))$ of $K$ is also free, 
  and the set 
  \begin{align*}
    T=\{ 1,\ga_1, \ga_2,\ga_1 \ga_2 \} \subset K
  \end{align*}
  is a Schreier transversal for $K_1$ in $K$. 
  By the Reidemeister-Schreier method, we obtain 
  \begin{align*}
    &\ga_0 ,\ \ga_1^2 , \ \ga_2^2 , \ 
    \ga_1  \ga_0 \ga_1^{-1} ,\ \ga_2 \ga_0 \ga_2^{-1} , \
    \ga_1 \ga_2 \ga_0 (\ga_1 \ga_2)^{-1} ,\\
    & \ga_2 \ga_1 \ga_2^{-1} \ga_1^{-1} ,\ 
    \ga_1 \ga_2 \ga_1 \ga_2^{-1} ,\ 
    \ga_1 \ga_2^2 \ga_1^{-1} 
  \end{align*}
  as generators of $K_1$, and 
  we also obtain 12 relations in $\pi_1 (\tilde{X}^{(2)})$. 
  To determine the generator, imitate (i) and (ii) in the proof of Lemma \ref{lem-RS-generator}.  
  We obtain similar relations to 
  (\ref{eq-RS-rel-1-0}), 
  (\ref{eq-RS-rel-1-1-1}), 
  (\ref{eq-RS-rel-1-1-2}), 
  (\ref{eq-RS-rel-1-2-1}), 
  (\ref{eq-RS-rel-3-0}), 
  (\ref{eq-RS-rel-3-1-1}), 
  (\ref{eq-RS-rel-3-1-2}), 
  (\ref{eq-RS-rel-3-1-3}), 
  (\ref{eq-RS-rel-3-2-1}), 
  (\ref{eq-RS-rel-3-2-2}). 
  Using the correspondence in Lemma \ref{lem-covering}, 
  we obtain the proposition. 
\end{proof}

\subsection{The case of $n=3$}
\begin{figure}[h]
  \centering{
  \includegraphics[scale=0.8]{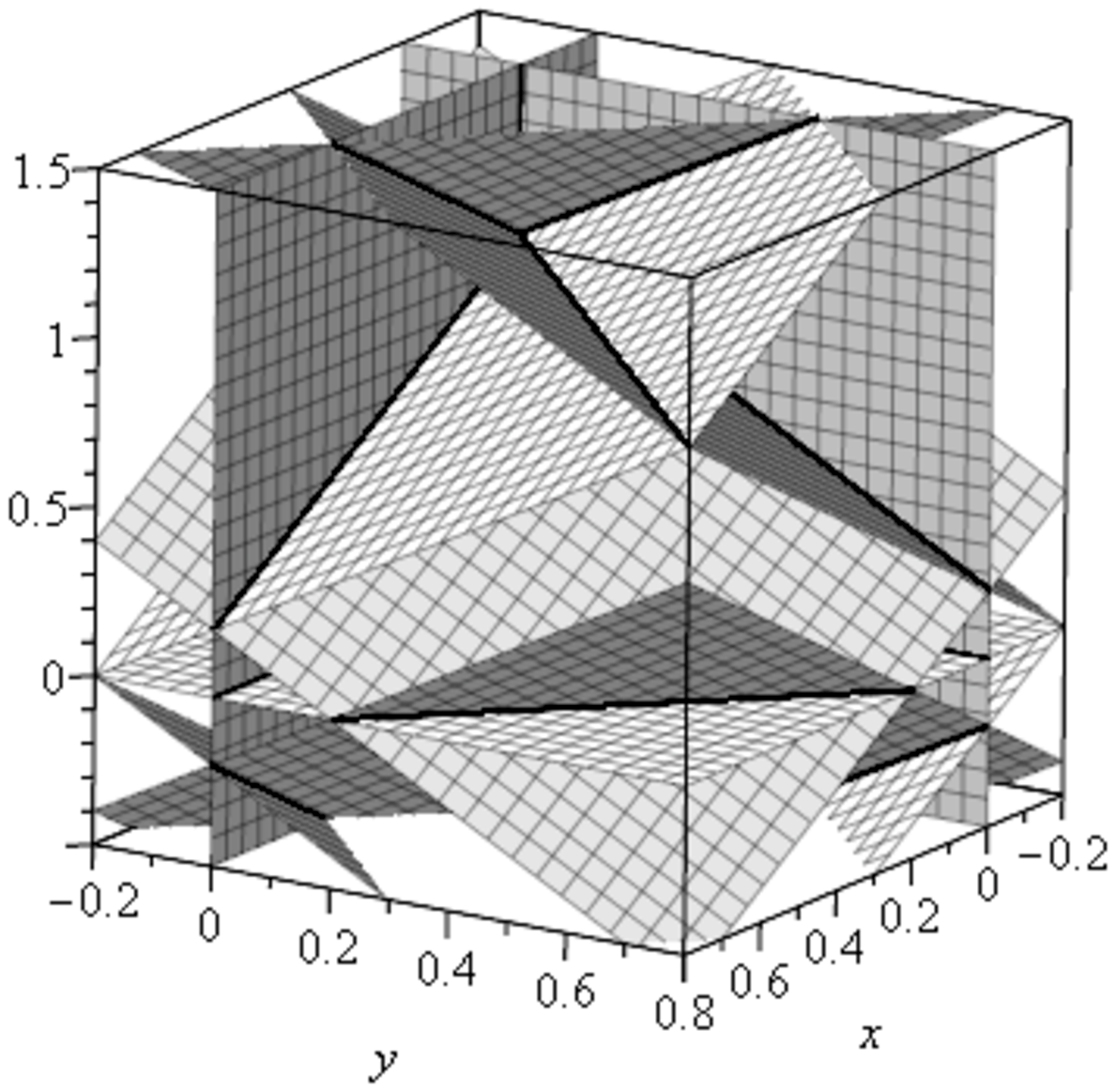} 
  }
  \caption{A part of $\tilde{X}^{(3)}$.}
  \label{fig-planes3dim}
\end{figure}
By using Theorem \ref{th-3dim}, 
we now compute the fundamental group $\pi_1 (\tilde{X}^{(3)})$. 
As in Subsection \ref{subsection-covering}, we consider the 11 loops
\begin{align}
  \label{eq-11-loops}
  \la_1, \ \la_2, \ \la_3 ,\ \la_0,\ 
  \la_0^{(1)}, \ \la_0^{(2)}, \ \la_0^{(3)}, \ \la_0^{(12)}, \ \la_0^{(13)}, \ \la_0^{(23)}, \ \la_0^{(123)}.
\end{align}
By Lemma \ref{lem-covering}, we have
\begin{align*}
  &\phi_{*} (\la_0)=\ga_0 , \ 
  \phi_{*} (\la_1)=\ga_1^2, \ 
  \phi_{*} (\la_2)=\ga_2^2, \ 
  \phi_{*} (\la_3)=\ga_3^2,\\ 
  &\phi_{*} (\la_0^{(1)})=\ga_1\ga_0\ga_1^{-1}, \ 
  \phi_{*} (\la_0^{(2)})=\ga_2\ga_0\ga_2^{-1}, \ 
  \phi_{*} (\la_0^{(3)})=\ga_3\ga_0\ga_3^{-1},  \\
  &\phi_{*} (\la_0^{(12)})=\ga_1\ga_2\ga_0(\ga_1\ga_2)^{-1}, \ 
  \phi_{*} (\la_0^{(13)})=\ga_1\ga_3\ga_0(\ga_1\ga_3)^{-1}, \\ 
  &\phi_{*} (\la_0^{(23)})=\ga_2\ga_3\ga_0(\ga_2\ga_3)^{-1}, \ 
  \phi_{*} (\la_0^{(123)})=\ga_1\ga_2\ga_3\ga_0(\ga_1\ga_2\ga_3)^{-1}.
\end{align*}
We put $G=\pi_1 (X^{(3)})$ and $G_1=\pi_1 (\tilde{X}^{(3)})$. 
Recall the short exact sequence 
\begin{align*}
  1 \longrightarrow G_1 \overset{\phi_*}{\longrightarrow} 
  G \overset{p}{\longrightarrow} (\Z / 2\Z)^3 \longrightarrow 1 .
\end{align*}
It is easy to see that 
this sequence is realized as 
\begin{align*}
  &p(\ga_0) =0 ,\ 
  p(\ga_1) =(1,0,0) ,\ 
  p(\ga_2) =(0,1,0) ,\ 
  p(\ga_3) =(0,0,1) ,\\
  &G_1 =\{ g\in G  \mid 
  \textrm{the sum of the exponents of }\ga_i \textrm{ is even for each }
  i=1,2,3 \} .
\end{align*}
If we put 
$$
q:(\Z / 2\Z)^3 \to G;\quad
q(b_1 ,b_2 ,b_3) =\ga_1^{b_1} \ga_2^{b_2} \ga_3^{b_3}, 
$$
then we have $q\circ p =\textrm{id}$. 
Thus, the above exact sequence is split one, and 
$G$ is a semidirect product of $G_1$ by $(\Z / 2\Z)^3$, 
that is, $G=G_1 \rtimes (\Z / 2\Z)^3$. 

We determine the generators and relations of $G_1$ by 
the Reidemeister-Schreier method (see, e.g., \cite[Chapter 2]{Bogopolski}). 

Let $K$ be the free group generated by $\ga_0$, $\ga_1$, $\ga_2$, $\ga_3$, 
and $\vph : K\to G$ be the natural epimorphism. 
Note that the subgroup $K_1 =\vph^{-1}(G_1)$ of $K$ is also free. 
The set 
\begin{align*}
  T=\{ 1,\ga_1, \ga_2, \ga_3 ,\ga_1 \ga_2 ,\ga_1 \ga_3 ,\ga_2 \ga_3 ,\ga_1 \ga_2 \ga_3 \}
  \subset K
\end{align*}
is a Schreier transversal for $K_1$ in $K$. 
For $g\in K$, denote by $\ol{g}$ the unique element of $T$ such that $K_1 g=K_1 \ol{g}$. 

\begin{Lem}
  \label{lem-RS-generator}
  The following 25 elements form a generator of the free group $K_1$: 
  \begin{align}
    \label{eq-25-generator}
    & \ga_0 ,\ \ga_1^2 , \ \ga_2^2 , \ \ga_3^2 , \ 
    \ga_1  \ga_0 \ga_1^{-1} ,\ \ga_2 \ga_0 \ga_2^{-1} ,\ \ga_3 \ga_0 \ga_3^{-1} ,\\
    \nonumber
    & \ga_j \ga_i \ga_j^{-1} \ga_i^{-1} ,\ 
    \ga_i \ga_j \ga_i \ga_j^{-1} ,\ 
    \ga_i \ga_j^2 \ga_i^{-1} , \
    \ga_i \ga_j \ga_0 (\ga_i \ga_j)^{-1}, \\
    \nonumber
    & \ga_1 \ga_3 \ga_2 (\ga_1 \ga_2 \ga_3)^{-1}, \ 
    \ga_2 \ga_3 \ga_1 (\ga_1 \ga_2 \ga_3)^{-1}, \\
    \nonumber
    & \ga_1 \ga_2 \ga_3 \ga_1 (\ga_2 \ga_3)^{-1},\ 
    \ga_1 \ga_2 \ga_3 \ga_2 (\ga_1 \ga_3)^{-1},\ 
    \ga_1 \ga_2 \ga_3^2 (\ga_1 \ga_2)^{-1},\\ 
    \nonumber
    &\ga_1 \ga_2 \ga_3 \ga_0 (\ga_1 \ga_2 \ga_3)^{-1}, 
  \end{align}
  where $1\leq i < j \leq 3$. 
\end{Lem}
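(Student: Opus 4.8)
The plan is a direct application of the Reidemeister--Schreier theorem. First I would record that, by the description of $G_1$ recalled just above, $K_1=\vph^{-1}(G_1)$ consists exactly of the words of $K$ in which each $\ga_i$ ($i=1,2,3$) occurs with even total exponent; hence the coset $K_1 w$ is determined by the three parities of the $\ga_i$-exponent sums of $w$. For $t=\ga_{i_1}\cdots\ga_{i_k}\in T$ with $i_1<\cdots<i_k$ this parity vector is the indicator vector of $\{i_1,\dots,i_k\}\subseteq\{1,2,3\}$, and as the subset varies these eight vectors are pairwise distinct; so $T$ is a transversal, and it is a Schreier transversal because it contains $1$ and is prefix-closed (every prefix of an increasing word is again an increasing word). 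By Reidemeister--Schreier, $K_1$ is then freely generated by the nontrivial elements among
\[
  t\cdot\ga\cdot\ol{t\ga}^{-1},\qquad t\in T,\ \ga\in\{\ga_0,\ga_1,\ga_2,\ga_3\},
\]
the trivial ones being precisely those with $t\ga\in T$.

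The heart of the argument is the bookkeeping of the $8\times 4=32$ pairs. The coset rule is immediate: $\ol{t\ga_0}=\ol{t}=t$, since $\ga_0$ does not affect the $\ga_i$-parities, while for $1\le i\le 3$ the word $\ol{t\ga_i}\in T$ is the increasing word whose index set is the symmetric difference of the index set of $t$ with $\{i\}$. Checking when $t\ga=\ol{t\ga}$: the $\ga_0$-column has no trivial pair; for $\ga=\ga_1$ the only trivial pair is $t=1$; for $\ga=\ga_2$ the trivial pairs are $t\in\{1,\ga_1\}$; and for $\ga=\ga_3$ they are $t\in\{1,\ga_1,\ga_2,\ga_1\ga_2\}$. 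This leaves $8+7+6+4=25$ generators, matching the asserted count.

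Finally I would write out and freely reduce these $25$ words and verify that, as a set, they coincide with \eqref{eq-25-generator}. The $\ga_0$-column produces $\ga_0$, the three $\ga_i\ga_0\ga_i^{-1}$, the three $\ga_i\ga_j\ga_0(\ga_i\ga_j)^{-1}$ with $i<j$, and $\ga_1\ga_2\ga_3\ga_0(\ga_1\ga_2\ga_3)^{-1}$. The $\ga_1$-column produces $\ga_1^2$, the words $\ga_j\ga_1\ga_j^{-1}\ga_1^{-1}$ and $\ga_1\ga_j\ga_1\ga_j^{-1}$ for $j=2,3$, and $\ga_2\ga_3\ga_1(\ga_1\ga_2\ga_3)^{-1}$, $\ga_1\ga_2\ga_3\ga_1(\ga_2\ga_3)^{-1}$; the $\ga_2$-column produces $\ga_2^2$, $\ga_3\ga_2\ga_3^{-1}\ga_2^{-1}$, $\ga_1\ga_2^2\ga_1^{-1}$, $\ga_2\ga_3\ga_2\ga_3^{-1}$, $\ga_1\ga_3\ga_2(\ga_1\ga_2\ga_3)^{-1}$, $\ga_1\ga_2\ga_3\ga_2(\ga_1\ga_3)^{-1}$; the $\ga_3$-column produces $\ga_3^2$, $\ga_1\ga_3^2\ga_1^{-1}$, $\ga_2\ga_3^2\ga_2^{-1}$, $\ga_1\ga_2\ga_3^2(\ga_1\ga_2)^{-1}$. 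Collecting the commutator-type and conjugated-square words as $\ga_j\ga_i\ga_j^{-1}\ga_i^{-1}$, $\ga_i\ga_j\ga_i\ga_j^{-1}$ and $\ga_i\ga_j^2\ga_i^{-1}$ over $1\le i<j\le 3$ recovers \eqref{eq-25-generator} verbatim, completing the proof. There is no conceptual obstacle: freeness of $K_1$ on this basis is automatic from Reidemeister--Schreier once $T$ is identified as a Schreier transversal, and the only real work — and the step most prone to slips — is keeping the $32$ pairs straight, discarding the seven trivial ones, and reducing each survivor correctly.
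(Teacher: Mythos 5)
Your proposal is correct and follows essentially the same route as the paper: a direct Reidemeister--Schreier computation with the same Schreier transversal $T$, yielding the same $25$ nontrivial elements $(tb)(\ol{tb})^{-1}$; your parity-vector bookkeeping for $\ol{tb}$ is just a cleaner packaging of the paper's explicit coset manipulations (e.g.\ $\ol{\ga_i\ga_0}=\ga_i$), and your column-by-column enumeration matches the paper's case-by-case list in (i)--(vi).
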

\begin{proof}
  We put $B=\{ \ga_0,\ga_1,\ga_2,\ga_3 \}$ which is a generator of $K$. 
  A generator of $K_1$ is given by 
  \begin{align*}
    \{ (tb)(\ol{tb})^{-1} \mid t\in T, b\in B , (tb)(\ol{tb})^{-1}\neq 1 \} .
  \end{align*}
  It is sufficient to compute all $(tb)(\ol{tb})^{-1}$. 
  \begin{enumerate}[(i)]
  \item In the case $t=1$, since $(tb)(\ol{tb})^{-1}$'s are
    \begin{align*}
      \ga_0 \ol{\ga_0}^{-1}=\ga_0 \cdot 1 =\ga_0 ,\  
      \ga_1 \ol{\ga_1}^{-1}=1,\ 
      \ga_2 \ol{\ga_2}^{-1}=1,\ 
      \ga_3 \ol{\ga_3}^{-1}=1,
    \end{align*}
    we obtain a generator $\ga_0$. 
  \item In the case $t=\ga_i$ $(1\leq i \leq 3)$,  
    $(tb)(\ol{tb})^{-1}$'s are 
    \begin{align*}
      \ga_i^2 (\ol{\ga_i^2})^{-1} , \ 
      \ga_i \ga_j (\ol{\ga_i \ga_j})^{-1},\ 
      \ga_i \ga_0 (\ol{\ga_i \ga_0})^{-1}, 
    \end{align*}
    where $j\neq i$. 
    Since $\ga_i^2, \ga_i \ga_0^{-1} \ga_i^{-1}\in K_1$, we have
    $\ol{\ga_i^2}=1$ and 
    $\ol{\ga_i \ga_0} =\ol{\ga_i \ga_0^{-1} \ga_i^{-1} \cdot \ga_i \ga_0} =\ga_i$. 
    Thus we obtain 
    \begin{align*}
      \ga_i^2 (\ol{\ga_i^2})^{-1} =\ga_i^2 , \ 
      \ga_i \ga_0 (\ol{\ga_i \ga_0})^{-1} =\ga_i \ga_0 \ga_i^{-1}. 
    \end{align*}
    We compute $\ga_i \ga_j (\ol{\ga_i \ga_j})^{-1}$. 
    If $i<j$, then $\ol{\ga_i \ga_j}=\ga_i \ga_j$. 
    If $i>j$, then $\ol{\ga_i \ga_j}=\ol{\ga_j \ga_i \ga_j^{-1} \ga_i^{-1} \cdot \ga_i \ga_j}=\ga_j \ga_i$. 
    We thus have 
    \begin{align*}
      \ga_i \ga_j (\ol{\ga_i \ga_j})^{-1} =\left\{
        \begin{array}{ll}
          1 & (i<j) \\
          \ga_i \ga_j \ga_i^{-1} \ga_j^{-1} & (i>j) .
        \end{array}
      \right.
    \end{align*}
    Therefore, we obtain generators $\ga_i^2$, $\ga_i \ga_0 \ga_i^{-1}$ $(1\leq i \leq 3)$ 
    and $\ga_i \ga_j \ga_i^{-1} \ga_j^{-1}$ $(1\leq j<i \leq 3)$. 
  \item In the case $t=\ga_1 \ga_2$, $\ol{tb}$'s are
    \begin{align*}
      &\ol{\ga_1 \ga_2 \ga_0}=\ol{\ga_1 \ga_2 \ga_0 (\ga_1 \ga_2)^{-1} \cdot \ga_1 \ga_2} =\ga_1 \ga_2 , \\
      &\ol{\ga_1 \ga_2 \ga_1}=\ol{\ga_1 \ga_2 \ga_1 \ga_2^{-1} \cdot \ga_2} =\ga_2 ,\quad  
      \ol{\ga_1 \ga_2 \ga_2}=\ol{\ga_1 \ga_2^2 \ga_1^{-1} \cdot \ga_1} =\ga_1 ,\\
      &\ol{\ga_1 \ga_2 \ga_3}=\ga_1 \ga_2 \ga_3 ,
    \end{align*}
    and hence we obtain generators
    \begin{align*}
      &\ga_1 \ga_2 \ga_0 (\ol{\ga_1 \ga_2 \ga_0})^{-1} = \ga_1 \ga_2 \ga_0 (\ga_1 \ga_2)^{-1}, \\
      &\ga_1 \ga_2 \ga_1 (\ol{\ga_1 \ga_2 \ga_1})^{-1} = \ga_1 \ga_2 \ga_1 \ga_2^{-1} ,\quad 
      \ga_1 \ga_2 \ga_2 (\ol{\ga_1 \ga_2 \ga_2})^{-1} = \ga_1 \ga_2^2 \ga_1^{-1} .
    \end{align*}
  \end{enumerate}
  The following generators are obtained 
  in the same way as above. 
  \begin{enumerate}[(i)]
    \setcounter{enumi}{3}
  \item In the case $t=\ga_1 \ga_3$, 
    we obtain generators
    \begin{align*}
      &\ga_1 \ga_3 \ga_0 (\ol{\ga_1 \ga_3 \ga_0})^{-1} = \ga_1 \ga_3 \ga_0 (\ga_1 \ga_3)^{-1}, \quad 
      \ga_1 \ga_3 \ga_1 (\ol{\ga_1 \ga_3 \ga_1})^{-1} = \ga_1 \ga_3 \ga_1 \ga_3^{-1} ,\\ 
      &\ga_1 \ga_3 \ga_2 (\ol{\ga_1 \ga_3 \ga_2})^{-1} = \ga_1 \ga_3 \ga_2 (\ga_1 \ga_2 \ga_3)^{-1}  \quad 
      \ga_1 \ga_3 \ga_3 (\ol{\ga_1 \ga_3 \ga_3})^{-1} = \ga_1 \ga_3^2 \ga_1^{-1} .
    \end{align*} 
  \item In the case $t=\ga_2 \ga_3$, 
    we obtain generators
    \begin{align*}
      &\ga_2 \ga_3 \ga_0 (\ol{\ga_2 \ga_3 \ga_0})^{-1} = \ga_2 \ga_3 \ga_0 (\ga_2 \ga_3)^{-1}, \quad 
      \ga_2 \ga_3 \ga_1 (\ol{\ga_2 \ga_3 \ga_1})^{-1} = \ga_2 \ga_3 \ga_1 (\ga_1 \ga_2 \ga_3)^{-1},\\ 
      &\ga_2 \ga_3 \ga_2 (\ol{\ga_2 \ga_3 \ga_2})^{-1} =  \ga_2 \ga_3 \ga_2 \ga_3^{-1} \quad 
      \ga_2 \ga_3 \ga_3 (\ol{\ga_2 \ga_3 \ga_3})^{-1} = \ga_2 \ga_3^2 \ga_2^{-1} .
    \end{align*}  
  \item In the case $t=\ga_1 \ga_2 \ga_3$, 
    we obtain generators
    \begin{align*}
      &\ga_1 \ga_2 \ga_3 \ga_0 (\ol{\ga_1 \ga_2 \ga_3 \ga_0})^{-1} 
      =\ga_1 \ga_2 \ga_3 \ga_0 (\ga_1 \ga_2 \ga_3)^{-1}, \\ 
      &\ga_1 \ga_2 \ga_3 \ga_1 (\ol{\ga_1 \ga_2 \ga_3 \ga_1})^{-1} 
      =\ga_1 \ga_2 \ga_3 \ga_1 (\ga_2 \ga_3)^{-1},\\ 
      &\ga_1 \ga_2 \ga_3 \ga_2 (\ol{\ga_1 \ga_2 \ga_3 \ga_2})^{-1} 
      =\ga_1 \ga_2 \ga_3 \ga_2 (\ga_1 \ga_3)^{-1} \\ 
      &\ga_1 \ga_2 \ga_3 \ga_3 (\ol{\ga_1 \ga_2 \ga_3 \ga_3})^{-1} 
      =\ga_1 \ga_2 \ga_3^2 (\ga_1 \ga_2)^{-1} .
    \end{align*}   
  \end{enumerate}
  Therefore, we obtain the 25 generators. 
\end{proof}
We put 
\begin{align*}
  R=\left\{
    \begin{array}{cl}
      \ga_i \ga_j \ga_i^{-1} \ga_j^{-1} &(1\leq i<j\leq 3) , \\
      \ga_i \ga_0 \ga_i^{-1} \ga_j \ga_0 \ga_j^{-1} 
      \ga_i \ga_0^{-1} \ga_i^{-1} \ga_j \ga_0^{-1} \ga_j^{-1} 
      &(1\leq i<j\leq 3) , \\
      \ga_i \ga_0 \ga_i \ga_0 \ga_i^{-1} \ga_0^{-1} \ga_i^{-1} \ga_0^{-1} 
      &(1\leq i \leq 3)
    \end{array}
  \right\} 
\end{align*}
which generates the relations of $G$, that is, 
$G=\langle \ga_0,\ga_1,\ga_2,\ga_3 \mid R\rangle$. 
By the Reidemeister-Schreier method, $G_1$ is presented by  
the 25 generators (\ref{eq-25-generator}) 
with relations of the form 
\begin{align*}
  t r t^{-1} ,\quad t\in T,\ r\in R .
\end{align*}
Note that to obtain relations among the generators (\ref{eq-25-generator}), 
we need to rewrite these relations by them. 

We write down the $72(=8\cdot 9)$ relations. 
\begin{enumerate}[(i)]
  \setlength{\itemsep}{40pt}
\item $r=\ga_i \ga_j \ga_i^{-1} \ga_j^{-1}$ 
  ($1\leq i<j\leq 3$). 
  \begin{enumerate}[(a)]
  \item $t=1$. We obtain a relation
    \begin{align}
      \label{eq-RS-rel-1-0}
      \ga_j \ga_i \ga_j^{-1} \ga_i^{-1}=1. 
    \end{align}
  \item $t=\ga_k$. 
    We obtain the following relations
    \begin{align}
      \label{eq-RS-rel-1-1-1}
      1=\ga_i \cdot \ga_i \ga_j \ga_i^{-1} \ga_j^{-1} \cdot \ga_i^{-1}
      =\ga_i^2 (\ga_i\ga_j\ga_i\ga_j^{-1})^{-1}. 
    \end{align}
    \begin{align}
      \label{eq-RS-rel-1-1-2}
      1=\ga_j \cdot \ga_i \ga_j \ga_i^{-1} \ga_j^{-1} \cdot \ga_j^{-1}
      =\ga_j \ga_i \ga_j^{-1} \ga_i^{-1} \cdot \ga_i \ga_j^2 \ga_i^{-1}  \cdot \ga_j^{-2}. 
    \end{align}
    \begin{align}
      \label{eq-RS-rel-1-1-3}
      1&=\ga_3 \cdot \ga_1 \ga_2 \ga_1^{-1} \ga_2^{-1} \cdot \ga_3^{-1} \\
      \nonumber
      &=\ga_3 \ga_1 \ga_3^{-1} \ga_1^{-1} \cdot 
      \ga_1 \ga_3 \ga_2 (\ga_1 \ga_2 \ga_3)^{-1} \\
      \nonumber
      &\quad \cdot 
      \left( \ga_3 \ga_2 \ga_3^{-1} \ga_2^{-1} 
        \cdot \ga_2 \ga_3 \ga_1 (\ga_1 \ga_2 \ga_3)^{-1} \right)^{-1} ,
    \end{align}
    \begin{align}
      \label{eq-RS-rel-1-1-4}
      1&=\ga_2 \cdot \ga_1 \ga_3 \ga_1^{-1} \ga_3^{-1} \cdot \ga_2^{-1} \\
      \nonumber
      &=\ga_2 \ga_1 \ga_2^{-1} \ga_1^{-1} 
      \cdot \left(  \ga_2 \ga_3 \ga_1 (\ga_1 \ga_2 \ga_3)^{-1} \right)^{-1} ,
    \end{align}
    \begin{align}
      \label{eq-RS-rel-1-1-5}
      1=\ga_1 \cdot \ga_2 \ga_3 \ga_2^{-1} \ga_3^{-1} \cdot \ga_1^{-1} 
      =\left( \ga_1 \ga_3 \ga_2 (\ga_1 \ga_2 \ga_3)^{-1} \right)^{-1} .
    \end{align}
    In the following, we write only the results. 
  \item $t=\ga_k \ga_l$. 
    \begin{align}
      \label{eq-RS-rel-1-2-1}
      1
      =\ga_i \ga_j \ga_i \ga_j^{-1} \cdot \ga_j^2 \cdot \ga_i^{-2} 
      \cdot (\ga_i \ga_j^2 \ga_i^{-1})^{-1}. 
    \end{align}
    \begin{align}
      \label{eq-RS-rel-1-2-2}
      1
      &=\ga_1 \ga_3 \ga_1 \ga_3^{-1} \cdot \ga_3 \ga_2 \ga_3^{-1} \ga_2^{-1} \\
      \nonumber
      &\quad 
      \cdot \left( \ga_1 \ga_3 \ga_2 (\ga_1 \ga_2 \ga_3)^{-1} 
        \cdot \ga_1 \ga_2 \ga_3 \ga_1 (\ga_2 \ga_3)^{-1} \right)^{-1} , 
    \end{align}
    \begin{align}
      \label{eq-RS-rel-1-2-3}
      1
      &=\ga_2 \ga_3 \ga_1 (\ga_1 \ga_2 \ga_3)^{-1} 
      \cdot \ga_1 \ga_2 \ga_3 \ga_2 (\ga_1 \ga_3)^{-1} \\
      \nonumber &\qquad 
      \cdot \ga_1 \ga_3 \ga_1^{-1} \ga_3^{-1} 
      \cdot (\ga_2 \ga_3 \ga_2 \ga_3^{-1})^{-1}. 
    \end{align}
    \begin{align}
      \label{eq-RS-rel-1-2-4}
      1
      =\ga_1 \ga_2 \ga_1 \ga_2^{-1} 
      \cdot \left( \ga_1 \ga_2 \ga_3 \ga_1 (\ga_2 \ga_3)^{-1} \right)^{-1} , 
    \end{align}
    \begin{align}
      \label{eq-RS-rel-1-2-5}
      1
      &=\ga_2 \ga_3 \ga_1 (\ga_1 \ga_2 \ga_3)^{-1}
      \cdot \ga_1 \ga_2 \ga_3^2 (\ga_1 \ga_2)^{-1} \\
      \nonumber &\qquad
      \cdot \left( \ga_2 \ga_3^2 \ga_2^{-1} \cdot \ga_2 \ga_1 \ga_2^{-1} \ga_1^{-1} \right)^{-1}. 
    \end{align}
    \begin{align}
      \label{eq-RS-rel-1-2-6}
      1
      =\ga_1 \ga_2^2 \ga_1^{-1} 
      \cdot \left( \ga_1 \ga_2 \ga_3 \ga_2 (\ga_1 \ga_3)^{-1} \right)^{-1}, 
    \end{align}
    \begin{align}
      \label{eq-RS-rel-1-2-7}
      1
      =\ga_1 \ga_3 \ga_2 (\ga_1 \ga_2 \ga_3)^{-1}
      \cdot \ga_1 \ga_2 \ga_3^2 (\ga_1 \ga_2)^{-1}
      \cdot \left( \ga_1 \ga_3^2 \ga_1^{-1} \right)^{-1}. 
    \end{align}
  \item $t=\ga_1 \ga_2 \ga_3$. 
    \begin{align}
      \label{eq-RS-rel-1-3-1}
      1
      &=\ga_1 \ga_2 \ga_3 \ga_1 (\ga_2 \ga_3)^{-1}
      \cdot \ga_2 \ga_3 \ga_2 \ga_3^{-1} \\
      \nonumber & \qquad 
      \cdot \left( \ga_1 \ga_2 \ga_3 \ga_2 (\ga_1 \ga_3)^{-1}
        \cdot \ga_1 \ga_3 \ga_1 \ga_3^{-1} \right)^{-1}, 
    \end{align}
    \begin{align}
      \label{eq-RS-rel-1-3-2}
      1
      &=\ga_1 \ga_2 \ga_3 \ga_1 (\ga_2 \ga_3)^{-1}
      \cdot \ga_2 \ga_3^2 \ga_2^{-1} 
      \\ \nonumber & \qquad 
      \cdot \left( \ga_1 \ga_2 \ga_3^2 (\ga_1 \ga_2)^{-1}
        \cdot \ga_1 \ga_2 \ga_1 \ga_2^{-1} \right)^{-1}, 
    \end{align}
    \begin{align}
      \label{eq-RS-rel-1-3-3}
      1
      =\ga_1 \ga_2 \ga_3 \ga_2 (\ga_1 \ga_3)^{-1}
      \cdot \ga_1 \ga_3^2 \ga_1^{-1}
      \cdot \left( \ga_1 \ga_2 \ga_3^2 (\ga_1 \ga_2)^{-1}
        \cdot \ga_1 \ga_2^2 \ga_1^{-1} \right)^{-1}. 
    \end{align}
  \end{enumerate}
\item $r=\ga_i \ga_0 \ga_i^{-1} \ga_j \ga_0 \ga_j^{-1} 
  \ga_i \ga_0^{-1} \ga_i^{-1} \ga_j \ga_0^{-1} \ga_j^{-1}$
  ($1\leq i<j\leq 3$). 
  \begin{enumerate}[(a)]
  \item $t=1$. 
    \begin{align}
      \label{eq-RS-rel-2-0}
      \ga_i \ga_0 \ga_i^{-1} \cdot \ga_j \ga_0 \ga_j^{-1} 
      \cdot \ga_i \ga_0^{-1} \ga_i^{-1} \cdot \ga_j \ga_0^{-1} \ga_j^{-1}=1. 
    \end{align}
  \item $t=\ga_k$. 
    \begin{align}
      \label{eq-RS-rel-2-1-1}
      1
      &=\ga_i^2 \cdot \ga_0 \cdot \ga_i^{-2} 
      \cdot \ga_i \ga_j \ga_0 (\ga_i \ga_j)^{-1} 
      \\ \nonumber & \qquad 
      \cdot \ga_i^2 \cdot \ga_0^{-1} \cdot \ga_i^{-2}  
      \cdot \left( \ga_i \ga_j \ga_0 (\ga_i \ga_j)^{-1} \right)^{-1} .
    \end{align}
    \begin{align}
      \label{eq-RS-rel-2-1-2}
      1
      &=\ga_j \ga_i \ga_j^{-1} \ga_i^{-1} 
      \cdot \ga_i \ga_j \ga_0 (\ga_i \ga_j)^{-1} 
      \cdot (\ga_j \ga_i \ga_j^{-1} \ga_i^{-1})^{-1} 
      \cdot \ga_j^2 \cdot \ga_0 \cdot \ga_j^{-2} \\
      \nonumber
      &\qquad
      \cdot \ga_j \ga_i \ga_j^{-1} \ga_i^{-1}
      \cdot \left( \ga_i \ga_j \ga_0 (\ga_i \ga_j)^{-1} \right)^{-1} 
      \\ \nonumber & \qquad 
      \cdot (\ga_j \ga_i \ga_j^{-1} \ga_i^{-1})^{-1}
      \cdot \ga_j^2 \cdot \ga_0^{-1} \cdot \ga_j^{-2} . 
    \end{align}
    \begin{align}
      \label{eq-RS-rel-2-1-3}
      1
      &
      =\ga_3 \ga_1 \ga_3^{-1} \ga_1^{-1}
      \cdot \ga_1 \ga_3 \ga_0 (\ga_1 \ga_3)^{-1} 
      \cdot (\ga_3 \ga_1 \ga_3^{-1} \ga_1^{-1})^{-1}
      \cdot \ga_3 \ga_2 \ga_3^{-1} \ga_2^{-1} \\
      \nonumber
      &\qquad 
      \cdot \ga_2 \ga_3 \ga_0 (\ga_2 \ga_3)^{-1} 
      \cdot (\ga_3 \ga_2 \ga_3^{-1} \ga_2^{-1})^{-1}
      \cdot \ga_3 \ga_1 \ga_3^{-1} \ga_1^{-1} \\
      \nonumber
      &\qquad 
      \cdot \left( \ga_1 \ga_3 \ga_0 (\ga_1 \ga_3)^{-1} \right)^{-1} 
      \cdot (\ga_3 \ga_1 \ga_3^{-1} \ga_1^{-1})^{-1}
      \cdot \ga_3 \ga_2 \ga_3^{-1} \ga_2^{-1} \\
      \nonumber
      &\qquad 
      \cdot \left( \ga_2 \ga_3 \ga_0 (\ga_2 \ga_3)^{-1} \right)^{-1} 
      \cdot (\ga_3 \ga_2 \ga_3^{-1} \ga_2^{-1})^{-1},
    \end{align}
    \begin{align}
      \label{eq-RS-rel-2-1-4}
      1
      &=\ga_2 \ga_1 \ga_2^{-1} \ga_1^{-1}
      \cdot \ga_1 \ga_2 \ga_0 (\ga_1 \ga_2)^{-1} 
      \cdot (\ga_2 \ga_1 \ga_2^{-1} \ga_1^{-1})^{-1}
      \\ \nonumber & \qquad 
      \cdot \ga_2 \ga_3 \ga_0 (\ga_2 \ga_3)^{-1} 
      \cdot \ga_2 \ga_1 \ga_2^{-1} \ga_1^{-1}
      \cdot \left( \ga_1 \ga_2 \ga_0 (\ga_1 \ga_2)^{-1} \right)^{-1} 
      \\ \nonumber & \qquad 
      \cdot (\ga_2 \ga_1 \ga_2^{-1} \ga_1^{-1})^{-1}
      \cdot \left( \ga_2 \ga_3 \ga_0 (\ga_2 \ga_3)^{-1} \right)^{-1} ,
    \end{align}
    \begin{align}
      \label{eq-RS-rel-2-1-5}
      1
      &=\ga_1 \ga_2 \ga_0 (\ga_1 \ga_2)^{-1}
      \cdot \ga_1 \ga_3 \ga_0 (\ga_1 \ga_3)^{-1} 
      \\ \nonumber & \qquad 
      \cdot \left( \ga_1 \ga_2 \ga_0 (\ga_1 \ga_2)^{-1} \right)^{-1} 
      \cdot \left( \ga_1 \ga_3 \ga_0 (\ga_1 \ga_3)^{-1} \right)^{-1} .
    \end{align}
  \item $t=\ga_k \ga_l$. 
    \begin{align}
      \label{eq-RS-rel-2-2-1}
      1
      &=\ga_i \ga_j \ga_i \ga_j^{-1} 
      \cdot \ga_j \ga_0 \ga_j^{-1}
      \cdot (\ga_i \ga_j \ga_i \ga_j^{-1})^{-1}
      \cdot \ga_i \ga_j^2 \ga_i^{-1} \\
      \nonumber
      &\qquad
      \cdot \ga_i \ga_0 \ga_i^{-1}
      \cdot (\ga_i \ga_j^2 \ga_i^{-1})^{-1}
      \cdot \ga_i \ga_j \ga_i \ga_j^{-1} 
      \cdot (\ga_j \ga_0 \ga_j^{-1})^{-1} \\
      \nonumber
      &\qquad
      \cdot (\ga_i \ga_j \ga_i \ga_j^{-1})^{-1}
      \cdot \ga_i \ga_j^2 \ga_i^{-1}
      \cdot (\ga_i \ga_0 \ga_i^{-1})^{-1}
      \cdot (\ga_i \ga_j^2 \ga_i^{-1})^{-1}. 
    \end{align}
    \begin{align}
      \label{eq-RS-rel-2-2-2}
      1
      &=\ga_1 \ga_3 \ga_1 \ga_3^{-1}
      \cdot \ga_3 \ga_0 \ga_3^{-1} 
      \cdot (\ga_1 \ga_3 \ga_1 \ga_3^{-1})^{-1} 
      \cdot \ga_1 \ga_3 \ga_2 (\ga_1 \ga_2 \ga_3)^{-1}  \\
      \nonumber
      &\qquad
      \cdot \ga_1 \ga_2 \ga_3 \ga_0 (\ga_1 \ga_2 \ga_3)^{-1}
      \cdot \left( \ga_1 \ga_3 \ga_2 (\ga_1 \ga_2 \ga_3)^{-1} \right)^{-1}
      \cdot \ga_1 \ga_3 \ga_1 \ga_3^{-1}\\
      \nonumber
      &\qquad
      \cdot (\ga_3 \ga_0 \ga_3^{-1} )^{-1}
      \cdot (\ga_1 \ga_3 \ga_1 \ga_3^{-1})^{-1}
      \cdot \ga_1 \ga_3 \ga_2 (\ga_1 \ga_2 \ga_3)^{-1} \\
      \nonumber
      &\qquad
      \cdot \left( \ga_1 \ga_2 \ga_3 \ga_0 (\ga_1 \ga_2 \ga_3)^{-1} \right)^{-1}
      \cdot \left( \ga_1 \ga_3 \ga_2 (\ga_1 \ga_2 \ga_3)^{-1} \right)^{-1} , 
    \end{align}
    \begin{align}
      \label{eq-RS-rel-2-2-3}
      1
      &=\ga_2 \ga_3 \ga_1 (\ga_1 \ga_2 \ga_3)^{-1}
      \cdot \ga_1 \ga_2 \ga_3 \ga_0 (\ga_1 \ga_2 \ga_3)^{-1}
      \\ \nonumber & \qquad 
      \cdot \left( \ga_2 \ga_3 \ga_1 (\ga_1 \ga_2 \ga_3)^{-1} \right)^{-1} 
      \cdot \ga_2 \ga_3 \ga_2 \ga_3^{-1}
      \cdot \ga_3 \ga_0 \ga_3^{-1} 
      \\ \nonumber & \qquad 
      \cdot (\ga_2 \ga_3 \ga_2 \ga_3^{-1})^{-1} 
      \cdot \ga_2 \ga_3 \ga_1 (\ga_1 \ga_2 \ga_3)^{-1}
      \\ \nonumber & \qquad 
      \cdot \left( \ga_1 \ga_2 \ga_3 \ga_0 (\ga_1 \ga_2 \ga_3)^{-1} \right)^{-1}
      \cdot \left( \ga_2 \ga_3 \ga_1 (\ga_1 \ga_2 \ga_3)^{-1} \right)^{-1} 
      \\ \nonumber & \qquad 
      \cdot \ga_2 \ga_3 \ga_2 \ga_3^{-1}
      \cdot (\ga_3 \ga_0 \ga_3^{-1} )^{-1}
      \cdot (\ga_2 \ga_3 \ga_2 \ga_3^{-1})^{-1} .
    \end{align}
    \begin{align}
      \label{eq-RS-rel-2-2-4}
      1
      &=\ga_1 \ga_2 \ga_1 \ga_2^{-1} 
      \cdot \ga_2 \ga_0 \ga_2^{-1} 
      \cdot (\ga_1 \ga_2 \ga_1 \ga_2^{-1})^{-1}
      \cdot \ga_1 \ga_2 \ga_3 \ga_0 (\ga_1 \ga_2 \ga_3)^{-1} \\
      \nonumber
      &\qquad
      \cdot \ga_1 \ga_2 \ga_1 \ga_2^{-1} 
      \cdot (\ga_2 \ga_0 \ga_2^{-1} )^{-1}
      \cdot (\ga_1 \ga_2 \ga_1 \ga_2^{-1})^{-1}\\
      \nonumber
      &\qquad
      \cdot \left( \ga_1 \ga_2 \ga_3 \ga_0 (\ga_1 \ga_2 \ga_3)^{-1} \right)^{-1} ,
    \end{align}
    \begin{align}
      \label{eq-RS-rel-2-2-5}
      1
      &=\ga_2 \ga_3 \ga_1 (\ga_1 \ga_2 \ga_3)^{-1}
      \cdot \ga_1 \ga_2 \ga_3 \ga_0 (\ga_1 \ga_2 \ga_3)^{-1} 
      \\ \nonumber & \qquad 
      \cdot \left( \ga_2 \ga_3 \ga_1 (\ga_1 \ga_2 \ga_3)^{-1} \right)^{-1} 
      \cdot \ga_2 \ga_3^2 \ga_2^{-1}
      \cdot \ga_2 \ga_0 \ga_2^{-1} 
      \\ \nonumber & \qquad 
      \cdot (\ga_2 \ga_3^2 \ga_2^{-1})^{-1}
      \cdot \ga_2 \ga_3 \ga_1 (\ga_1 \ga_2 \ga_3)^{-1} 
      \\ \nonumber & \qquad 
      \cdot \left( \ga_1 \ga_2 \ga_3 \ga_0 (\ga_1 \ga_2 \ga_3)^{-1} \right)^{-1}
      \cdot \left( \ga_2 \ga_3 \ga_1 (\ga_1 \ga_2 \ga_3)^{-1} \right)^{-1} 
      \\ \nonumber & \qquad 
      \cdot \ga_2 \ga_3^2 \ga_2^{-1}
      \cdot (\ga_2 \ga_0 \ga_2^{-1} )^{-1}
      \cdot (\ga_2 \ga_3^2 \ga_2^{-1})^{-1} . 
    \end{align}
    \begin{align}
      \label{eq-RS-rel-2-2-6}
      1
      &=\ga_1 \ga_2^2 \ga_1^{-1} 
      \cdot \ga_1 \ga_0 \ga_1^{-1} 
      \cdot (\ga_1 \ga_2^2 \ga_1^{-1})^{-1}
      \\ \nonumber & \qquad 
      \cdot \ga_1 \ga_2 \ga_3 \ga_0 (\ga_1 \ga_2 \ga_3)^{-1}  
      \cdot \ga_1 \ga_2^2 \ga_1^{-1} 
      \cdot (\ga_1 \ga_0 \ga_1^{-1} )^{-1}
      \\ \nonumber & \qquad 
      \cdot (\ga_1 \ga_2^2 \ga_1^{-1})^{-1}
      \cdot \left( \ga_1 \ga_2 \ga_3 \ga_0 (\ga_1 \ga_2 \ga_3)^{-1} \right)^{-1} ,
    \end{align}
    \begin{align}
      \label{eq-RS-rel-2-2-7}
      1
      &=\ga_1 \ga_3 \ga_2 (\ga_1 \ga_2 \ga_3)^{-1}
      \cdot \ga_1 \ga_2 \ga_3 \ga_0 (\ga_1 \ga_2 \ga_3)^{-1}
      \\ \nonumber & \qquad 
      \cdot \left( \ga_1 \ga_3 \ga_2 (\ga_1 \ga_2 \ga_3)^{-1} \right)^{-1} 
      \cdot \ga_1 \ga_3^2 \ga_1^{-1} 
      \cdot \ga_1 \ga_0 \ga_1^{-1} 
      \\ \nonumber & \qquad 
      \cdot (\ga_1 \ga_3^2 \ga_1^{-1})^{-1}
      \cdot \ga_1 \ga_3 \ga_2 (\ga_1 \ga_2 \ga_3)^{-1} 
      \\ \nonumber & \qquad 
      \cdot \left( \ga_1 \ga_2 \ga_3 \ga_0 (\ga_1 \ga_2 \ga_3)^{-1} \right)^{-1}
      \cdot \left( \ga_1 \ga_3 \ga_2 (\ga_1 \ga_2 \ga_3)^{-1} \right)^{-1}  
      \\ \nonumber & \qquad 
      \cdot \ga_1 \ga_3^2 \ga_1^{-1} 
      \cdot (\ga_1 \ga_0 \ga_1^{-1} )^{-1}
      \cdot (\ga_1 \ga_3^2 \ga_1^{-1})^{-1} .
    \end{align}
  \item $t=\ga_1 \ga_2 \ga_3$. 
    \begin{align}
      \label{eq-RS-rel-2-3-1}
      1
      &=\ga_1 \ga_2 \ga_3 \ga_1 (\ga_2 \ga_3)^{-1}
      \cdot \ga_2 \ga_3 \ga_0 (\ga_2 \ga_3)^{-1} 
      \cdot \left( \ga_1 \ga_2 \ga_3 \ga_1 (\ga_2 \ga_3)^{-1} \right)^{-1} 
      \\ \nonumber & \qquad 
      \cdot \ga_1 \ga_2 \ga_3 \ga_2 (\ga_1 \ga_3)^{-1}
      \cdot \ga_1 \ga_3 \ga_0 (\ga_1 \ga_3)^{-1} 
      \\ \nonumber & \qquad 
      \cdot \left( \ga_1 \ga_2 \ga_3 \ga_2 (\ga_1 \ga_3)^{-1} \right)^{-1} 
      \cdot \ga_1 \ga_2 \ga_3 \ga_1 (\ga_2 \ga_3)^{-1}
      \\ \nonumber & \qquad 
      \cdot \left( \ga_2 \ga_3 \ga_0 (\ga_2 \ga_3)^{-1} \right)^{-1} 
      \cdot \left( \ga_1 \ga_2 \ga_3 \ga_1 (\ga_2 \ga_3)^{-1} \right)^{-1} 
      \\ \nonumber & \qquad 
      \cdot \ga_1 \ga_2 \ga_3 \ga_2 (\ga_1 \ga_3)^{-1}
      \cdot \left( \ga_1 \ga_3 \ga_0 (\ga_1 \ga_3)^{-1} \right)^{-1} 
      \\ \nonumber & \qquad 
      \cdot \left( \ga_1 \ga_2 \ga_3 \ga_2 (\ga_1 \ga_3)^{-1} \right)^{-1} ,
    \end{align}
    \begin{align}
      \label{eq-RS-rel-2-3-2}
      1
      &=\ga_1 \ga_2 \ga_3 \ga_1 (\ga_2 \ga_3)^{-1}
      \cdot \ga_2 \ga_3 \ga_0 (\ga_2 \ga_3)^{-1} 
      \cdot \left( \ga_1 \ga_2 \ga_3 \ga_1 (\ga_2 \ga_3)^{-1} \right)^{-1} \\
      \nonumber
      &\qquad 
      \cdot \ga_1 \ga_2 \ga_3^2 (\ga_1 \ga_2)^{-1}
      \cdot \ga_1 \ga_2 \ga_0 (\ga_1 \ga_2)^{-1} 
      \cdot \left( \ga_1 \ga_2 \ga_3^2 (\ga_1 \ga_2)^{-1} \right)^{-1} \\
      \nonumber
      &\qquad 
      \cdot \ga_1 \ga_2 \ga_3 \ga_1 (\ga_2 \ga_3)^{-1}
      \cdot \left( \ga_2 \ga_3 \ga_0 (\ga_2 \ga_3)^{-1} \right)^{-1} 
      \\ \nonumber & \qquad 
      \cdot \left( \ga_1 \ga_2 \ga_3 \ga_1 (\ga_2 \ga_3)^{-1} \right)^{-1} 
      \cdot \ga_1 \ga_2 \ga_3^2 (\ga_1 \ga_2)^{-1}
      \\ \nonumber & \qquad 
      \cdot \left( \ga_1 \ga_2 \ga_0 (\ga_1 \ga_2)^{-1} \right)^{-1} 
      \cdot \left( \ga_1 \ga_2 \ga_3^2 (\ga_1 \ga_2)^{-1} \right)^{-1} ,
    \end{align}
    \begin{align}
      \label{eq-RS-rel-2-3-3}
      1
      &=\ga_1 \ga_2 \ga_3 \ga_2 (\ga_1 \ga_3)^{-1}
      \cdot \ga_1 \ga_3 \ga_0 (\ga_1 \ga_3)^{-1} 
      \cdot \left( \ga_1 \ga_2 \ga_3 \ga_2 (\ga_1 \ga_3)^{-1} \right)^{-1} \\
      \nonumber
      &\qquad 
      \cdot \ga_1 \ga_2 \ga_3^2 (\ga_1 \ga_2)^{-1}
      \cdot \ga_1 \ga_2 \ga_0 (\ga_1 \ga_2)^{-1} 
      \cdot \left( \ga_1 \ga_2 \ga_3^2 (\ga_1 \ga_2)^{-1} \right)^{-1} \\
      \nonumber
      &\qquad 
      \cdot \ga_1 \ga_2 \ga_3 \ga_2 (\ga_1 \ga_3)^{-1}
      \cdot \left( \ga_1 \ga_3 \ga_0 (\ga_1 \ga_3)^{-1} \right)^{-1} 
      \\ \nonumber & \qquad 
      \cdot \left( \ga_1 \ga_2 \ga_3 \ga_2 (\ga_1 \ga_3)^{-1} \right)^{-1} 
      \cdot \ga_1 \ga_2 \ga_3^2 (\ga_1 \ga_2)^{-1}
      \\ \nonumber & \qquad 
      \cdot \left( \ga_1 \ga_2 \ga_0 (\ga_1 \ga_2)^{-1} \right)^{-1} 
      \cdot \left( \ga_1 \ga_2 \ga_3^2 (\ga_1 \ga_2)^{-1} \right)^{-1} .
    \end{align}
  \end{enumerate}
\item $r=\ga_i \ga_0 \ga_i \ga_0 \ga_i^{-1} \ga_0^{-1} \ga_i^{-1} \ga_0^{-1}$
  ($1\leq i \leq 3$). 
  \begin{enumerate}[(a)]
  \item $t=1$. 
    \begin{align}
      \label{eq-RS-rel-3-0}
      1
      =\ga_i \ga_0 \ga_i^{-1} \cdot \ga_i^2 \cdot \ga_0 \cdot \ga_i^{-2} \cdot 
      (\ga_i \ga_0 \ga_i^{-1})^{-1} \cdot \ga_0^{-1} .
    \end{align}
  \item $t=\ga_k$. 
    \begin{align}
      \label{eq-RS-rel-3-1-1}
      1
      &=\ga_k \ga_i \ga_0 (\ga_k \ga_i)^{-1}
      \cdot \ga_k \ga_i^2 \ga_k^{-1}
      \cdot \ga_k \ga_0 \ga_k^{-1} 
      \\
      \nonumber
      &\qquad
      \cdot \left( \ga_k \ga_0 \ga_k^{-1}
        \cdot \ga_k \ga_i \ga_0 (\ga_k \ga_i)^{-1}
        \cdot \ga_k \ga_i^2 \ga_k^{-1} \right)^{-1} 
      \quad (k<i). 
    \end{align}
    \begin{align}
      \label{eq-RS-rel-3-1-2}
      1
      =\ga_i^2 \cdot \ga_0 
      \cdot \ga_i \ga_0 \ga_i^{-1} 
      \cdot \ga_0^{-1} \cdot \ga_i^{-2}
      \cdot (\ga_i \ga_0 \ga_i^{-1})^{-1} .
    \end{align}
    \begin{align}
      \label{eq-RS-rel-3-1-3}
      1
      &=\ga_k \ga_i \ga_k^{-1} \ga_i^{-1}
      \cdot \ga_i \ga_k \ga_0 (\ga_i \ga_k)^{-1}
      \cdot \ga_i \ga_k \ga_i \ga_k^{-1}
      \cdot \ga_k \ga_0 \ga_k^{-1} \\
      \nonumber
      &\qquad
      \cdot \big( \ga_k \ga_0 \ga_k^{-1}
        \cdot \ga_k \ga_i \ga_k^{-1} \ga_i^{-1}
        \cdot \ga_i \ga_k \ga_0 (\ga_i \ga_k)^{-1}
        \cdot \ga_i \ga_k \ga_i \ga_k^{-1} \big)^{-1}  
      \\ \nonumber  & \qquad
      \phantom{\ga_k \ga_0 \ga_k^{-1}
        \cdot \ga_k \ga_i \ga_k^{-1} \ga_i^{-1}
        \cdot \ga_i \ga_k \ga_0 (\ga_i \ga_k)^{-1}
        \cdot \ga_i \ga_k}
      \quad (k>i). 
    \end{align}
  \item $t=\ga_k \ga_l$. 
    \begin{align}
      \label{eq-RS-rel-3-2-1}
      1
      &=\ga_i \ga_l \ga_i \ga_l^{-1}
      \cdot \ga_l \ga_0 \ga_l^{-1} 
      \cdot \ga_l \ga_i \ga_l^{-1} \ga_i^{-1}
      \cdot \ga_i \ga_l \ga_0 (\ga_i \ga_l)^{-1} \\
      \nonumber
      &\qquad
      \cdot \left( \ga_i \ga_l \ga_0 (\ga_i \ga_l)^{-1}
        \cdot \ga_i \ga_l \ga_i \ga_l^{-1}
        \cdot \ga_l \ga_0 \ga_l^{-1} 
        \cdot \ga_l \ga_i \ga_l^{-1} \ga_i^{-1}
      \right)^{-1}  
      \\ \nonumber  & \qquad
      \phantom{\ga_i \ga_l \ga_0 (\ga_i \ga_l)^{-1}
        \cdot \ga_i \ga_l \ga_i \ga_l^{-1}
        \cdot \ga_l \ga_0 \ga_l^{-1} 
        \cdot \ga_l \ga_i \ga_l^{-1}}
      \quad (i<l). 
    \end{align}
    \begin{align}
      \label{eq-RS-rel-3-2-2}
      1
      &=\ga_k \ga_i^2 \ga_k^{-1}
      \cdot \ga_k \ga_0 \ga_k^{-1} 
      \cdot \ga_k \ga_i \ga_0 (\ga_k \ga_i)^{-1} \\
      \nonumber
      &\qquad
      \cdot \left( \ga_k \ga_i \ga_0 (\ga_k \ga_i)^{-1}
        \cdot \ga_k \ga_i^2 \ga_k^{-1}
        \cdot \ga_k \ga_0 \ga_k^{-1} 
      \right)^{-1}  
      \quad (k<i). 
    \end{align}
    \begin{align}
      \label{eq-RS-rel-3-2-3}
      1
      &=\ga_2 \ga_3 \ga_1 (\ga_1 \ga_2 \ga_3)^{-1}
      \cdot \ga_1 \ga_2 \ga_3 \ga_0 (\ga_1 \ga_2 \ga_3)^{-1}
      \\ \nonumber  & \qquad
      \cdot \ga_1 \ga_2 \ga_3 \ga_1 (\ga_2 \ga_3)^{-1}
      \cdot \ga_2 \ga_3 \ga_0 (\ga_2 \ga_3)^{-1}  
      \\ \nonumber  & \qquad
      \cdot \big( 
        \ga_2 \ga_3 \ga_0 (\ga_2 \ga_3)^{-1} 
        \cdot \ga_2 \ga_3 \ga_1 (\ga_1 \ga_2 \ga_3)^{-1}  
      \\ \nonumber  & \qquad \qquad
        \cdot \ga_1 \ga_2 \ga_3 \ga_0 (\ga_1 \ga_2 \ga_3)^{-1} 
        \cdot \ga_1 \ga_2 \ga_3 \ga_1 (\ga_2 \ga_3)^{-1} 
      \big)^{-1} ,
    \end{align}
    \begin{align}
      \label{eq-RS-rel-3-2-4}
      1
      &=\ga_1 \ga_3 \ga_2 (\ga_1 \ga_2 \ga_3)^{-1}
      \cdot \ga_1 \ga_2 \ga_3 \ga_0 (\ga_1 \ga_2 \ga_3)^{-1}
      \\ \nonumber  & \qquad
      \cdot \ga_1 \ga_2 \ga_3 \ga_2 (\ga_1 \ga_3)^{-1}
      \cdot \ga_1 \ga_3 \ga_0 (\ga_1 \ga_3)^{-1}  
      \\ \nonumber  & \qquad
      \cdot \big( 
        \ga_1 \ga_3 \ga_0 (\ga_1 \ga_3)^{-1} 
        \cdot \ga_1 \ga_3 \ga_2 (\ga_1 \ga_2 \ga_3)^{-1}  
      \\ \nonumber  & \qquad \qquad 
        \cdot \ga_1 \ga_2 \ga_3 \ga_0 (\ga_1 \ga_2 \ga_3)^{-1} 
        \cdot \ga_1 \ga_2 \ga_3 \ga_2 (\ga_1 \ga_3)^{-1} 
      \big)^{-1} ,
    \end{align}
    \begin{align}
      \label{eq-RS-rel-3-2-5}
      1
      &=\ga_1 \ga_2 \ga_3 \ga_0 (\ga_1 \ga_2 \ga_3)^{-1}
      \cdot \ga_1 \ga_2 \ga_3^2 (\ga_1 \ga_2)^{-1}
      \cdot \ga_1 \ga_2 \ga_0 (\ga_1 \ga_2)^{-1}  \\
      \nonumber
      &\qquad 
      \cdot \big( 
        \ga_1 \ga_2 \ga_0 (\ga_1 \ga_2)^{-1} 
        \cdot \ga_1 \ga_2 \ga_3 \ga_0 (\ga_1 \ga_2 \ga_3)^{-1} 
      \\ \nonumber  & \qquad \qquad 
        \cdot \ga_1 \ga_2 \ga_3^2 (\ga_1 \ga_2)^{-1} 
      \big)^{-1} .
    \end{align}
  \item $t=\ga_1 \ga_2 \ga_3$. 
    We take $k,l$ such that $\{ i,k,l \} =\{ 1,2,3 \}$ and $k<l$. 
    Then we obtain a relation
    \begin{align}
      \label{eq-RS-rel-3-3}
      1
      &=\ga_1 \ga_2 \ga_3 \ga_i (\ga_k \ga_l)^{-1}
      \cdot \ga_k \ga_l \ga_0 (\ga_k \ga_l)^{-1}
      \\ \nonumber  & \qquad
      \cdot \ga_k \ga_l \ga_i (\ga_1 \ga_2 \ga_3)^{-1}
      \cdot \ga_1 \ga_2 \ga_3 \ga_0 (\ga_1 \ga_2 \ga_3)^{-1}  \\
      \nonumber
      &\qquad 
      \cdot \big( 
        \ga_1 \ga_2 \ga_3 \ga_0 (\ga_1 \ga_2 \ga_3)^{-1}
        \cdot \ga_1 \ga_2 \ga_3 \ga_i (\ga_k \ga_l)^{-1}
      \\ \nonumber  & \qquad \qquad 
        \cdot \ga_k \ga_l \ga_0 (\ga_k \ga_l)^{-1}
        \cdot \ga_k \ga_l \ga_i (\ga_1 \ga_2 \ga_3)^{-1}
      \big)^{-1} .
    \end{align}
  \end{enumerate}
\end{enumerate}
Therefore, $G_1$ is presented by  
the 25 generators (\ref{eq-25-generator}) and 
the 72 relations (\ref{eq-RS-rel-1-0})--(\ref{eq-RS-rel-3-3}). 
We reduce this presentation to a simpler one. 
\begin{Cor}
  The 11 elements   
  \begin{align}
    \label{eq-11-generator}
    & \ga_0 ,\ \ga_1^2 , \ \ga_2^2 , \ \ga_3^2 , \ 
    \ga_1  \ga_0 \ga_1^{-1} ,\ \ga_2 \ga_0 \ga_2^{-1} ,\ \ga_3 \ga_0 \ga_3^{-1} ,\\
    \nonumber
    &\ga_1 \ga_2 \ga_0 (\ga_1 \ga_2)^{-1},\ 
    \ga_1 \ga_3 \ga_0 (\ga_1 \ga_3)^{-1}, \
    \ga_2 \ga_3 \ga_0 (\ga_2 \ga_3)^{-1},\\
    \nonumber
    &\ga_1 \ga_2 \ga_3 \ga_0 (\ga_1 \ga_2 \ga_3)^{-1} 
  \end{align}
  in (\ref{eq-25-generator}) generate $G_1$. 
\end{Cor}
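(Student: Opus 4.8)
\emph{Proof proposal.} Since $G_1$ is already known to be generated by the $25$ elements (\ref{eq-25-generator}), it is enough to express each of the $14$ elements of (\ref{eq-25-generator}) that do not occur in (\ref{eq-11-generator}) as a word in the $11$ retained ones; that is all the corollary asserts. The plan is to extract these $14$ identities from the first family of Reidemeister--Schreier relations, those arising from $r=\ga_i\ga_j\ga_i^{-1}\ga_j^{-1}$, i.e.\ from (\ref{eq-RS-rel-1-0})--(\ref{eq-RS-rel-1-2-7}); none of the remaining relations is needed for this step.

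First I would use (\ref{eq-RS-rel-1-0}) to get $\ga_j\ga_i\ga_j^{-1}\ga_i^{-1}=1$ for $1\leq i<j\leq 3$, eliminating the three commutator generators. Substituting this into (\ref{eq-RS-rel-1-1-1}) and (\ref{eq-RS-rel-1-1-2}) should yield
\[
\ga_i\ga_j\ga_i\ga_j^{-1}=\ga_i^2 ,\qquad \ga_i\ga_j^2\ga_i^{-1}=\ga_j^2 \qquad (1\leq i<j\leq 3),
\]
so the six generators $\ga_i\ga_j\ga_i\ga_j^{-1}$ and $\ga_i\ga_j^2\ga_i^{-1}$ become words in $\ga_1^2,\ga_2^2,\ga_3^2$. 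Next, (\ref{eq-RS-rel-1-1-5}) gives $\ga_1\ga_3\ga_2(\ga_1\ga_2\ga_3)^{-1}=1$ directly, and (\ref{eq-RS-rel-1-1-4}) combined with $\ga_2\ga_1\ga_2^{-1}\ga_1^{-1}=1$ gives $\ga_2\ga_3\ga_1(\ga_1\ga_2\ga_3)^{-1}=1$, disposing of the two generators on the third line of (\ref{eq-25-generator}). Finally, feeding the identities obtained so far into (\ref{eq-RS-rel-1-2-4}), (\ref{eq-RS-rel-1-2-6}) and (\ref{eq-RS-rel-1-2-7}) should produce
\[
\ga_1\ga_2\ga_3\ga_1(\ga_2\ga_3)^{-1}=\ga_1^2 ,\quad \ga_1\ga_2\ga_3\ga_2(\ga_1\ga_3)^{-1}=\ga_2^2 ,\quad \ga_1\ga_2\ga_3^2(\ga_1\ga_2)^{-1}=\ga_3^2 ,
\]
handling the three generators on the fourth line. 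With all $14$ superfluous generators thereby written in terms of (\ref{eq-11-generator}), the corollary follows.

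There is no real obstacle here; the only point to watch is the order of elimination --- the ``length-three mixed'' generators on the third and fourth lines of (\ref{eq-25-generator}) must be treated after the commutators and after $\ga_i\ga_j\ga_i\ga_j^{-1}$, $\ga_i\ga_j^2\ga_i^{-1}$ have been resolved, since (\ref{eq-RS-rel-1-1-4}), (\ref{eq-RS-rel-1-2-4}), (\ref{eq-RS-rel-1-2-6}), (\ref{eq-RS-rel-1-2-7}) refer to those. The laborious part is not this corollary but the step that follows it: rewriting all $72$ relations (\ref{eq-RS-rel-1-0})--(\ref{eq-RS-rel-3-3}) in the generators (\ref{eq-11-generator}) and collapsing them to a manageable presentation, for which this corollary is merely the bookkeeping prelude.
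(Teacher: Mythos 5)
Your proposal is correct and follows essentially the same route as the paper: the authors likewise reduce the corollary to expressing the $14$ extra generators via the identities $\ga_j\ga_i\ga_j^{-1}\ga_i^{-1}=1$, $\ga_i\ga_j\ga_i\ga_j^{-1}=\ga_i^2$, $\ga_i\ga_j^2\ga_i^{-1}=\ga_j^2$, $\ga_1\ga_3\ga_2(\ga_1\ga_2\ga_3)^{-1}=\ga_2\ga_3\ga_1(\ga_1\ga_2\ga_3)^{-1}=1$ and $\ga_1\ga_2\ga_3\ga_1(\ga_2\ga_3)^{-1}=\ga_1^2$, etc., extracted exactly from (\ref{eq-RS-rel-1-0}), (\ref{eq-RS-rel-1-1-1}), (\ref{eq-RS-rel-1-1-2}), (\ref{eq-RS-rel-1-1-4}), (\ref{eq-RS-rel-1-1-5}), (\ref{eq-RS-rel-1-2-4}), (\ref{eq-RS-rel-1-2-6}), (\ref{eq-RS-rel-1-2-7}), with the same order of elimination you describe.
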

\begin{proof}
  To prove this, it suffices to show the following relations in $G_1$:
  \begin{align}
    \label{eq-cor-generator1}
    & \ga_j \ga_i \ga_j^{-1} \ga_i^{-1}=1 ,\\
    \label{eq-cor-generator2}
    & \ga_i \ga_j \ga_i \ga_j^{-1}=\ga_i^2 ,\\ 
    \label{eq-cor-generator3}
    & \ga_i \ga_j^2 \ga_i^{-1} =\ga_j^2, \\
    \label{eq-cor-generator4}
    & \ga_1 \ga_3 \ga_2 (\ga_1 \ga_2 \ga_3)^{-1} =1, \ 
    \ga_2 \ga_3 \ga_1 (\ga_1 \ga_2 \ga_3)^{-1} =1, \\
    \label{eq-cor-generator5}
    & \ga_1 \ga_2 \ga_3 \ga_1 (\ga_2 \ga_3)^{-1} =\ga_1^2,\\
    \nonumber
    &\ga_1 \ga_2 \ga_3 \ga_2 (\ga_1 \ga_3)^{-1}=\ga_2^2,\ 
    \ga_1 \ga_2 \ga_3^2 (\ga_1 \ga_2)^{-1}=\ga_3^2 , 
  \end{align}
  where $1\leq i < j \leq 3$. 
  (\ref{eq-cor-generator1}) is same as (\ref{eq-RS-rel-1-0}). 
  (\ref{eq-cor-generator2}) is equivalent to (\ref{eq-RS-rel-1-1-1}). 
  (\ref{eq-cor-generator3}) follows from (\ref{eq-RS-rel-1-1-2}) and (\ref{eq-cor-generator1}). 
  The first relation of (\ref{eq-cor-generator4}) is equivalent to (\ref{eq-RS-rel-1-1-5}), 
  and the second one follows from (\ref{eq-RS-rel-1-1-4}) and (\ref{eq-cor-generator1}).
  Three relations (\ref{eq-cor-generator5}) follow from 
  (\ref{eq-RS-rel-1-2-4}), (\ref{eq-RS-rel-1-2-6}), (\ref{eq-RS-rel-1-2-7}) 
  and above relations. 
\end{proof}
Under the inclusion $\phi_* :G_1 \to G$,  
the 11 loops (\ref{eq-11-loops}) coincide with the generator (\ref{eq-11-generator})
of $G_1$, 
so we use the notations
\begin{align*}
  & \ga_i^2 =\la_i ,\quad \ga_0 =\la_0 ,\quad 
  \ga_i \ga_0 \ga_i^{-1}=\la_0^{(i)} ,\\
  &\ga_j \ga_k \ga_0 (\ga_j \ga_k)^{-1}=\la_0^{(jk)},\quad 
  \ga_1 \ga_2 \ga_3 \ga_0 (\ga_1 \ga_2 \ga_3)^{-1} =\la_0^{(123)},
\end{align*}
where $1\leq i \leq 3$ and $1\leq j<k \leq 3$. 

By using these generators and relations (\ref{eq-cor-generator1})--(\ref{eq-cor-generator5}), 
we rewrite the relations (\ref{eq-RS-rel-1-0})--(\ref{eq-RS-rel-3-3}). 
The relations (\ref{eq-RS-rel-1-0})--(\ref{eq-RS-rel-1-1-5}) and 
(\ref{eq-RS-rel-1-2-2})--(\ref{eq-RS-rel-1-2-7}) become trivial ones. 
(\ref{eq-RS-rel-1-2-1}) implies
\begin{align*}
  [\la_i ,\la_j]=1 \quad (1\leq i < j \leq 3), 
\end{align*}
which is equivalent to (\ref{eq-RS-rel-1-3-1})--(\ref{eq-RS-rel-1-3-3}). 
(\ref{eq-RS-rel-2-0}) implies 
\begin{align*}
  [\la_0^{(i)} ,\la_0^{(j)}]=1 \quad (1\leq i < j \leq 3). 
\end{align*}
(\ref{eq-RS-rel-2-1-1}) and (\ref{eq-RS-rel-2-1-2}) imply 
\begin{align}
  \label{eq-H-rel4-all}
  [\la_0^{(ij)} , \la_i \la_0 \la_i^{-1}]=1 , \ 
  [\la_0^{(ij)} , \la_j \la_0 \la_j^{-1}]=1 \quad (1\leq i < j \leq 3) .
\end{align}
(\ref{eq-RS-rel-2-1-3})--(\ref{eq-RS-rel-2-1-5}) imply 
\begin{align*}
  [\la_0^{(12)} ,\la_0^{(13)}]=1 ,\ 
  [\la_0^{(12)} ,\la_0^{(23)}]=1 ,\ 
  [\la_0^{(13)} ,\la_0^{(23)}]=1 .
\end{align*}
(\ref{eq-RS-rel-2-2-1}) implies 
\begin{align}
  \label{eq-H-rel6}
  [\la_i \la_0^{(j)} \la_i^{-1} , \la_j \la_0^{(i)} \la_j^{-1}]=1 
  \quad (1\leq i < j \leq 3) .  
\end{align}
(\ref{eq-RS-rel-2-2-2})--(\ref{eq-RS-rel-2-2-7}) imply
\begin{align}
  \label{eq-H-rel5-all}
  [\la_0^{(123)} , \la_i \la_0^{(j)} \la_i^{-1}]=1 
  \quad (1\leq i \neq j \leq 3) .    
\end{align}
(\ref{eq-RS-rel-2-3-1})--(\ref{eq-RS-rel-2-3-3}) imply
\begin{align}
  \label{eq-H-rel7}
  &[\la_1 \la_0^{(23)} \la_1^{-1} , \la_2 \la_0^{(13)} \la_2^{-1}]=1, \\ 
  \nonumber
  &[\la_1 \la_0^{(23)} \la_1^{-1} , \la_3 \la_0^{(12)} \la_3^{-1}]=1, \ 
  [\la_2 \la_0^{(13)} \la_2^{-1} , \la_3 \la_0^{(12)} \la_3^{-1}]=1.
\end{align}
(\ref{eq-RS-rel-3-0}) and (\ref{eq-RS-rel-3-1-2}) imply 
\begin{align*}
  \la_0^{(i)}\la_i\la_0=\la_0\la_0^{(i)}\la_i=\la_i\la_0\la_0^{(i)}  \quad (1\leq i \leq 3). 
\end{align*}
(\ref{eq-RS-rel-3-1-1}) and (\ref{eq-RS-rel-3-2-2}) imply
\begin{align*}
  \la_j\la_0^{(i)}\la_0^{(ij)}=\la_0^{(i)}\la_0^{(ij)}\la_j=\la_0^{(ij)}\la_j\la_0^{(i)} 
  \quad (1\leq i < j \leq 3) . 
\end{align*}
(\ref{eq-RS-rel-3-1-3}) and (\ref{eq-RS-rel-3-2-1}) imply
\begin{align*}
  \la_i\la_0^{(j)}\la_0^{(ij)}=\la_0^{(j)}\la_0^{(ij)}\la_i=\la_0^{(ij)}\la_i\la_0^{(j)} 
  \quad (1\leq i < j \leq 3) .
\end{align*}
(\ref{eq-RS-rel-3-2-3})--(\ref{eq-RS-rel-3-3}) imply 
\begin{align*}
  \la_i\la_0^{(jk)}\la_0^{(123)}=\la_0^{(jk)}\la_0^{(123)}\la_i=\la_0^{(123)}\la_i\la_0^{(jk)} ,
\end{align*}
where $\{ i,j,k\}=\{ 1,2,3\}$ and $j<k$.
\begin{Th}
  The fundamental group 
  $\pi_1 (\tilde{X}^{(3)}) =G_1$ has a presentation by 11 generators
  \begin{align*}
    \la_1,\la_2,\la_3,\la_0, 
    \la_0^{(1)},\la_0^{(2)},\la_0^{(3)},  
    \la_0^{(12)},\la_0^{(13)},\la_0^{(23)}, \la_0^{(123)} ,
  \end{align*}
  and 27 defining relations
  \begin{align}
    \label{eq-H-rel1}
    &[\la_i ,\la_j]=1 \quad (1\leq i < j \leq 3),  \\
    \label{eq-H-rel2}
    &[\la_0^{(i)} ,\la_0^{(j)}]=1 \quad (1\leq i < j \leq 3),  \\
    \label{eq-H-rel3}
    &[\la_0^{(12)} ,\la_0^{(13)}]=1 ,\ 
    [\la_0^{(12)} ,\la_0^{(23)}]=1 ,\ 
    [\la_0^{(13)} ,\la_0^{(23)}]=1 ,  \\
    \label{eq-H-rel4}
    &[\la_0^{(ij)} , \la_i \la_0 \la_i^{-1}]=1 \quad (1\leq i < j \leq 3),  \\
    \label{eq-H-rel5}
    &[\la_0^{(123)} , \la_1 \la_0^{(2)} \la_1^{-1}]=1, \\ 
    \nonumber
    &[\la_0^{(123)} , \la_2 \la_0^{(3)} \la_2^{-1}]=1, \ 
    [\la_0^{(123)} , \la_3 \la_0^{(1)} \la_3^{-1}]=1, \\
    \label{eq-H-rel8}
    &\la_0^{(i)}\la_i\la_0=\la_0\la_0^{(i)}\la_i=\la_i\la_0\la_0^{(i)}  \quad (1\leq i \leq 3), \\
    \nonumber
    &\la_i\la_0^{(j)}\la_0^{(ij)}=\la_0^{(j)}\la_0^{(ij)}\la_i=\la_0^{(ij)}\la_i\la_0^{(j)} 
    \quad (1\leq i < j \leq 3), \\
    \nonumber
    &\la_j\la_0^{(i)}\la_0^{(ij)}=\la_0^{(i)}\la_0^{(ij)}\la_j=\la_0^{(ij)}\la_j\la_0^{(i)} 
    \quad (1\leq i < j \leq 3), \\
    \nonumber
    &\la_i\la_0^{(jk)}\la_0^{(123)}=\la_0^{(jk)}\la_0^{(123)}\la_i=\la_0^{(123)}\la_i\la_0^{(jk)} \\
    \nonumber
    &\qquad (\{ i,j,k\}=\{ 1,2,3\}, \ j<k).
  \end{align}
\end{Th}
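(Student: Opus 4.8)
The approach is to run the Reidemeister--Schreier rewriting process on the
presentation $G=\langle \ga_0,\ga_1,\ga_2,\ga_3\mid R\rangle$ furnished by
Theorem~\ref{th-3dim}, with respect to the Schreier transversal $T$, and then
to simplify the resulting presentation of $G_1=\pi_1(\tilde{X}^{(3)})$. This is
exactly the computation set up above: Lemma~\ref{lem-RS-generator} gives the
$25$ generators \eqref{eq-25-generator} of $K_1=\vph^{-1}(G_1)$, and rewriting
the $72$ words $trt^{-1}$ ($t\in T$, $r\in R$) in these generators produces
\eqref{eq-RS-rel-1-0}--\eqref{eq-RS-rel-3-3}, which by the
Reidemeister--Schreier theorem are a complete set of defining relations for
$G_1$. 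So all that remains is to reduce this presentation, with $25$ generators
and $72$ relations, to the asserted one with $11$ generators and $27$
relations.

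First I would pass from $25$ to $11$ generators. By the preceding Corollary,
the relations \eqref{eq-cor-generator1}--\eqref{eq-cor-generator5}, each read
off from one of \eqref{eq-RS-rel-1-0}, \eqref{eq-RS-rel-1-1-1},
\eqref{eq-RS-rel-1-1-2}, \eqref{eq-RS-rel-1-1-4}, \eqref{eq-RS-rel-1-1-5},
\eqref{eq-RS-rel-1-2-4}, \eqref{eq-RS-rel-1-2-6}, \eqref{eq-RS-rel-1-2-7},
express each of the $14$ remaining generators either trivially or as a word in
the $11$ generators \eqref{eq-11-generator}. Next I would substitute these
expressions into each of the $72$ relations and simplify. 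One checks that
\eqref{eq-RS-rel-1-0}--\eqref{eq-RS-rel-1-1-5} and
\eqref{eq-RS-rel-1-2-2}--\eqref{eq-RS-rel-1-2-7} become trivial; that
\eqref{eq-RS-rel-1-2-1} gives $[\la_i,\la_j]=1$, from which
\eqref{eq-RS-rel-1-3-1}--\eqref{eq-RS-rel-1-3-3} follow; that the type~(ii)
relations \eqref{eq-RS-rel-2-0}--\eqref{eq-RS-rel-2-3-3} collapse to
\eqref{eq-H-rel2}, \eqref{eq-H-rel3}, \eqref{eq-H-rel4-all},
\eqref{eq-H-rel6}, \eqref{eq-H-rel5-all}, \eqref{eq-H-rel7}; and that the
type~(iii) relations \eqref{eq-RS-rel-3-0}--\eqref{eq-RS-rel-3-3} collapse to
the braid--type relations assembled in \eqref{eq-H-rel8}. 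Discarding the
relations that are consequences of the rest --- in particular
\eqref{eq-H-rel6}, \eqref{eq-H-rel7}, and the ``extra'' halves of
\eqref{eq-H-rel4-all} and \eqref{eq-H-rel5-all} --- leaves precisely the $27$
relations \eqref{eq-H-rel1}--\eqref{eq-H-rel8}.

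Finally, under the inclusion $\phi_*\colon G_1\hookrightarrow G$ the eleven
generators \eqref{eq-11-generator} are exactly the images of the loops
\eqref{eq-11-loops}, by Lemma~\ref{lem-covering}; rewriting the presentation in
the notation $\la_1,\la_2,\la_3,\la_0,\la_0^{(i)},\la_0^{(jk)},\la_0^{(123)}$
then yields the stated presentation of $\pi_1(\tilde{X}^{(3)})$.

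The main obstacle is the bookkeeping in the middle step. Each of the longer
relations \eqref{eq-RS-rel-2-2-1}--\eqref{eq-RS-rel-3-3} is an alternating word
of considerable length (a dozen or more syllables) in the $25$ generators, and
reducing it to one of the target forms requires substituting the $14$ generator
relations and then repeatedly applying the commutators already established ---
for instance $[\la_i,\la_j]=1$ and $[\la_0^{(i)},\la_0^{(j)}]=1$. Equally
delicate is the redundancy check: one must confirm that every rewritten
relation is a consequence of the $27$ retained ones, so that no defining
relation of $G_1$ has been lost in the reduction and the reduced presentation
presents $G_1$ itself rather than a strictly larger group.
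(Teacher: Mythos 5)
Your outline follows the paper's own route exactly: Reidemeister--Schreier rewriting over the transversal $T$, the 25 generators and 72 rewritten relations, reduction to the 11 generators \eqref{eq-11-generator}, and then discarding the redundant relations to reach the stated 27. The one substantive point you leave unproved is, however, precisely the content of the paper's proof of this theorem: that the discarded relations --- the second half of \eqref{eq-H-rel4-all}, the relations \eqref{eq-H-rel6}, the extra instances of \eqref{eq-H-rel5-all}, and \eqref{eq-H-rel7} --- really are consequences of the 27 retained ones. You correctly identify which relations must be shown redundant, but you only flag this verification as ``delicate'' instead of carrying it out, and without it one has not excluded that the reduced presentation defines a strictly larger group. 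The paper closes this by a short conjugation argument: for instance, to get $[\la_0^{(ij)},\la_j\la_0\la_j^{-1}]=1$ one uses the triple relations \eqref{eq-H-rel8} to rewrite $\la_j^{-1}\la_0^{(ij)}\la_j=\la_0^{(i)}\la_0^{(ij)}{\la_0^{(i)}}^{-1}$ and ${\la_0^{(i)}}^{-1}\la_0\la_0^{(i)}=\la_i\la_0\la_i^{-1}$, which reduces the claim to the retained relation $[\la_0^{(ij)},\la_i\la_0\la_i^{-1}]=1$ of \eqref{eq-H-rel4}; the remaining redundancies are disposed of by entirely similar manipulations. Supplying these few commutator computations is what your sketch still needs to be a complete proof.
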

\begin{proof}
  We need to show that the relations 
  \begin{itemize}
  \item the second relation of (\ref{eq-H-rel4-all}), 
  \item (\ref{eq-H-rel6}), 
  \item (\ref{eq-H-rel5-all}) for $(i,j)=(3,2),(1,3),(2,1)$, and 
  \item (\ref{eq-H-rel7}) 
  \end{itemize}
  follow from (\ref{eq-H-rel1})--(\ref{eq-H-rel8}). 
  We only consider the second relation of (\ref{eq-H-rel4-all}), 
  since the others are also proved similarly. 
  By (\ref{eq-H-rel8}), we have 
  \begin{align*}
    \la_0^{(i)}\la_i\la_0=\la_0\la_0^{(i)}\la_i ,\quad 
    \la_j\la_0^{(i)}\la_0^{(ij)}=\la_0^{(ij)}\la_j\la_0^{(i)} .
  \end{align*}
  Then (\ref{eq-H-rel4}) implies
  \begin{align*}
    &[\la_0^{(ij)}, \la_j \la_0 \la_j^{-1}]
    =\la_j \cdot [ \la_j^{-1} \la_0^{(ij)}\la_j, \la_0] \cdot \la_j^{-1} \\
    &=\la_j \cdot [\la_0^{(i)} \la_0^{(ij)} {\la_0^{(i)}}^{-1}, \la_0] \cdot \la_j^{-1} 
    =\la_j \la_0^{(i)} \cdot [\la_0^{(ij)}, {\la_0^{(i)}}^{-1} \la_0\la_0^{(i)}] \cdot (\la_j \la_0^{(i)})^{-1} \\
    &=\la_j \la_0^{(i)} \cdot [\la_0^{(ij)}, \la_i \la_0 \la_i^{-1}] \cdot (\la_j \la_0^{(i)})^{-1}
    =\la_j \la_0^{(i)} \cdot 1 \cdot (\la_j \la_0^{(i)})^{-1} =1. 
  \end{align*}
\end{proof}
\begin{Rem}
  We can interpret that these relations come from lines which are
  intersections of the planes $H_k$, $H(a_1,a_2,a_3)$,
  as Table \ref{table:intersection}. 
  For example, the loop $\la_0^{(12)}$ turns 
  the hyperplane $H(1,1,0)$. 
  \begin{table}
    \begin{tabular}{|l|l|}
      \hline
      (\ref{eq-H-rel1})
      & $H_1\cap H_2$, $H_1\cap H_3$, $H_2\cap H_3$ \\ \hline
      (\ref{eq-H-rel2})
      & $H(1,0,0) \cap H(0,1,0)$, $H(1,0,0) \cap H(0,0,1)$, $H(0,1,0) \cap H(0,0,1)$  \\ \hline  
      (\ref{eq-H-rel3})
      & $H(1,1,0) \cap H(1,0,1)$, $H(0,1,1) \cap H(1,1,0)$, $H(1,0,1) \cap H(0,1,1)$  \\ \hline  
      (\ref{eq-H-rel4})
      & $H(0,0,0) \cap H(1,1,0)$, $H(0,0,0) \cap H(1,0,1)$, $H(0,0,0) \cap H(0,1,1)$ \\ \hline  
      (\ref{eq-H-rel5})
      & $H(1,1,1) \cap H(0,1,0)$, $H(1,1,1) \cap H(0,0,1)$, $H(1,1,1) \cap H(1,0,0)$  \\ \hline  
      (\ref{eq-H-rel8})
      & $H(0,0,0)\cap H(1,0,0) \cap H_1$, 
      $H(0,0,0)\cap H(0,1,0) \cap H_2$, \\
      &$H(0,0,0)\cap H(0,0,1) \cap H_3$  \\ \cline{2-2}
      & $H(0,1,0)\cap H(1,1,0) \cap H_1$, 
      $H(0,0,1)\cap H(1,0,1) \cap H_1$, \\
      &$H(0,0,1)\cap H(0,1,1) \cap H_2$  \\ \cline{2-2}
      & $H(1,0,0)\cap H(1,1,0) \cap H_2$, 
      $H(1,0,0)\cap H(1,0,1) \cap H_3$, \\
      &$H(0,1,0)\cap H(0,1,1) \cap H_3$  \\ \cline{2-2}
      & $H(0,1,1)\cap H(1,1,1) \cap H_1$, 
      $H(1,0,1)\cap H(1,1,1) \cap H_2$, \\
      &$H(1,1,0)\cap H(1,1,1) \cap H_3$ 
      \\ \hline  
    \end{tabular}
    \caption{Relations and intersections.}\label{table:intersection}
  \end{table}
\end{Rem}


\begin{thebibliography}{99}
\bibitem{Bogopolski} 
  O. Bogopolski, 
  ``Introduction to Group Theory'', 
  EMS Textbooks in Mathematics, 2008.
%
\bibitem{Dimca}
  A. Dimca, 
  ``Singularities and Topology of Hypersurfaces'', 
  Universitext, 
  Springer-Verlag, New York, 1992. 
%
\bibitem{G-FC-monodromy}
  Y. Goto,
  \emph{The monodromy representation of Lauricella's hypergeometric function $F_C$}, 
  {Ann. Sc. Norm. Super. Pisa Cl. Sci. (5)}, \textbf{XVI} (2016), 1409--1445. 
%
\bibitem{Hilton} 
  H. Hilton, 
  ``Plane algebraic curves'', 
  Oxford university press, 1920.
%
\bibitem{Kaneko}
  J. Kaneko, 
  \emph{Monodromy group of Appell's system $(F_{4})$}, 
  {Tokyo J. Math.}, \textbf{4} (1981), 35--54. 
%
\bibitem{Mukai}
  S. Mukai, 
  \emph{Igusa quartic and Steiner surfaces}, 
  {Contemp. Math.}, \textbf{564} (2012), 205--210. 
%
\bibitem{Orlik-Terao}
  P. Orlik, H. Terao, 
  ``Arrangements of Hyperplanes'', 
  Springer-Verlag, Berlin, 1992. 
%
\bibitem{Randell}
 R. Randell, 
 \emph{The fundamental group of the complement of a union of complex hyperplanes}, 
 {Invent. Math.}, \textbf{69} (1982), 103--108.
\end{thebibliography}
\end{document}